\pdfoutput=1
\documentclass[12pt]{article}

\usepackage[english]{babel}

\newcommand{\suppress}[1]{}

\usepackage[T1]{fontenc}
\usepackage{booktabs}
\usepackage{amsmath,amsthm,amssymb}
\usepackage{mathtools}
\usepackage{physics}
\usepackage{algorithm}
\usepackage{algorithmic}
\usepackage{url}
\usepackage{enumerate}
\usepackage[colorlinks=true, allcolors=blue]{hyperref}
\usepackage{xspace}
\usepackage{upgreek}
\usepackage{microtype}

\hypersetup{citecolor=black, linkcolor=black, colorlinks=true}
\numberwithin{equation}{section}
\usepackage{cleveref}
\usepackage{thmtools}

\newtheorem{theorem}{Theorem}
\newtheorem{corollary}[theorem]{Corollary}
\newtheorem{proposition}[theorem]{Proposition}

\newtheorem{lemma}[theorem]{Lemma}

\newtheorem{remark}[theorem]{Remark}

\newcommand{\qi}{\mathbf{i}}
\newcommand{\qj}{\mathbf{j}}
\newcommand{\qk}{\mathbf{k}}

\newcommand{\C}{\mathbb{C}}
\newcommand{\N}{\mathbb{N}}
\newcommand{\HH}{\mathbb{H}}

\newcommand{\Q}{\mathcal{Q}}

\newcommand{\R}{\mathbb{R}}

\newcommand*{\email}[1]{\texttt{#1}}

\newcommand{\complex}{{\mathbb C}}
\newcommand{\reals}{{\mathbb R}}

\newcommand{\union}{\cup}

\newcommand{\adjoint}{*}
\newcommand{\eqdef}{\coloneqq}
\newcommand{\eq}{=}
\newcommand{\diracdelta}{\updelta}

\newcommand{\set}[1]{\left\{ #1 \right\}}

\DeclareMathOperator{\e}{e}
\newcommand{\complexi}{{\mathrm{i}}}
\newcommand{\id}{{\mathrm I}}

\newcommand{\qsphere}{{\mathsf S}}
\newcommand{\so}{\mathsf{SO}}

\DeclareMathOperator{\Span}{span}
\DeclareMathOperator{\diag}{diag}

\DeclareMathOperator{\PSD}{PSD}
\DeclareMathOperator{\CPSD}{CPSD}
\DeclareMathOperator{\DFT}{DFT}

\newcommand{\tH}{\widetilde{H}}

\newcommand{\cL}{{\mathcal L}}
\newcommand{\cS}{{\mathcal S}}

\newcommand{\detii}{\genfrac{\lvert}{\rvert}{0pt}{}}

\newcommand{\PauliX}{\upsigma_{\mathrm{x}}}
\newcommand{\PauliY}{\upsigma_{\mathrm{y}}}
\newcommand{\PauliZ}{\upsigma_{\mathrm{z}}}

\newcommand{\com}{/\hspace{-1pt}/ }

\makeatletter
\newcommand{\subjclass}[2][1991]{%
  \let\@oldtitle\@title%
  \gdef\@title{\@oldtitle\footnotetext{#1 \emph{Mathematics subject classification.} #2}}%
}
\newcommand{\keywords}[1]{%
  \let\@@oldtitle\@title%
  \gdef\@title{\@@oldtitle\footnotetext{\emph{Key words and phrases.} #1.}}%
}
\makeatother

\title{Quaternionic Perfect Sequences \\ and Hadamard Matrices}
\subjclass[2020]{Primary 05B20; Secondary 94A55, 15B33}
\keywords{Perfect sequences, Hadamard matrices, quaternions}
\date{January 29, 2026}
\author{Aidan Bennett\thanks{School of Computer Science, University of Windsor.
401 Sunset Ave.\ Windsor,
Canada.
Email: \email{bennet43@uwindsor.ca}.
Research supported by an NSERC undergraduate research assistantship.}
\and
Curtis Bright\thanks{School of Computer Science, University of Waterloo.
200 University Ave.\ W., Waterloo,
Canada.
Email: \email{cbright@uwaterloo.ca}, Webpage: \url{www.curtisbright.com}.
Research supported in part by an NSERC Discovery Grant.
Corresponding author.}
\and
Paul Colinot\thanks{Université Grenoble Alpes, G-SCOP, CNRS\@.
681 Rue de la Passerelle,
Saint-Martin-d'Hères, France.
Email: \email{paul.colinot@grenoble-inp.org}.
Research conducted under the Mitacs Globalink research internship program.}
\and 
Ashwin Nayak\thanks{Department of Combinatorics and Optimization
and Institute for Quantum Computing, University of Waterloo.
200 University Ave.\ W., Waterloo,
Canada.
Email: \email{academic@ashwinnayak.info}. 
Research supported in part by an NSERC Discovery Grant.}
}

\begin{document}
\maketitle

\begin{abstract}
A finite sequence of numbers is perfect if it has zero periodic autocorrelation after a nontrivial cyclic shift.
In this work, we study quaternionic perfect sequences having a one-to-one correspondence
with the binary sequences arising in Williamson's construction of quaternion-type Hadamard matrices.
Using this correspondence, we devise an enumeration algorithm
that is significantly faster than previously used algorithms and does not require the sequences to be symmetric.
We implement our algorithm and use it to enumerate all circulant and possibly non-symmetric Williamson-type matrices of orders up to 21;
previously, the largest order exhaustively enumerated was 13.
We prove that when the blocks of a quaternion-type Hadamard matrix are circulant,
the blocks are necessarily pairwise amicable. This dramatically improves the filtering power of our algorithm:
in order~20, the number of block pairs needing consideration is reduced by a factor of over 25,000.
We use our results to construct quaternionic Hadamard matrices of interest in quantum communication and prove they are not equivalent to those constructed by other means.
We also study the properties of quaternionic Hadamard matrices analytically,
and demonstrate the feasibility of characterizing quaternionic Hadamard matrices with a fixed pattern of entries.
These results indicate a richer set of properties and suggest an abundance of quaternionic Hadamard matrices for sufficiently large orders.
\end{abstract}

\section{Introduction}\label{sec:intro}

A finite sequence is called \emph{perfect} if it has no correlation with itself after a nontrivial cyclic shift.
A matrix $H\in\{\pm1\}^{n\times n}$ is a (real) \emph{Hadamard matrix} if it satisfies $HH^\top=n\id_n$, and a matrix is \emph{circulant}
if every row is a cyclic shift of the previous row by one position to the right.  
When its entries are over an alphabet of $\pm1$, a perfect sequence is equivalent to a Hadamard matrix that is circulant~\cite{schmidt}.
For example,
\[ \begin{bmatrix}
1 & -1 & -1 & -1 \\
-1 & 1 & -1 & -1 \\
-1 & -1 & 1 & -1 \\
-1 & -1 & -1 & 1
\end{bmatrix} \]
is a circulant Hadamard matrix equivalent to the perfect sequence defining its first row $[1,-1,-1,-1]$.

Perfect sequences have long been studied for both their practical applications and their elegant mathematical properties.
However, the condition of having \emph{no} correlation with itself at all nontrivial cyclic shifts is in practice
difficult to achieve for most alphabets.  In fact, over the binary $\{\pm1\}$ alphabet the longest known perfect sequence
is the length~4 sequence given above, and
it has been conjectured that this is the \emph{only} nontrivial
perfect binary sequence up to isomorphism~\cite{ryser1963combinatorial}.

By increasing the alphabet size one can find arbitrarily long perfect sequences.  One of the first constructions
for perfect sequences was in 1961 by R.~C.~Heimiller~\cite{Heimiller1961}.  He
gave a construction for perfect sequences of length $p^2$
over an alphabet consisting of the $p$th roots of unity when $p$ is prime. In fact, R.~L.~Frank
had discovered the same construction, without the restriction that the length be prime,
in the 1950s while working as an aircraft engineer at the Sperry
Gyroscope Company~\cite{frank}.
For example, Frank's construction produces perfect sequences
of length $4^2=16$ over the quaternary (complex) alphabet $\{\pm1,\pm\qi\}$.

Perfect sequences over $\{\pm1,\pm\qi\}$ are
equivalent to circulant complex Hadamard matrices, where
a matrix $H\in \complex^{n\times n}$ is a \emph{complex Hadamard matrix}
if all its entries have modulus one and~$HH^*=n\id_n$ where $H^*$ denotes the conjugate transpose of $H$.
For example, Frank's construction in length~4 generates
a circulant $16\times16$ complex Hadamard matrix
and it has been conjectured that this is the largest circulant complex Hadamard matrix~\cite{turyn1970complex}.

Although it is unlikely that there are arbitrarily large circulant Hadamard matrices,
there do exist arbitrarily large non-circulant Hadamard matrices.  For example,
in 1867, Sylvester~\cite{Sylvester1867} constructed Hadamard matrices in orders~$2^k$ for all $k\geq0$.
In 1893, Hadamard~\cite{hadamard1893resolution} proved
that if a Hadamard matrix exists of order $n>2$ then $n$ must be divisible by 4.
The \emph{Hadamard conjecture} is that this necessary condition is also sufficient.
Computational evidence supports the conjecture that Hadamard matrices do exist in all orders that are multiples of four.
Although this conjecture has not been proven, many infinite classes of Hadamard matrices have been discovered.

\subsection{Quaternion-type Hadamard matrices}
\label{sec:qt-intro}

An important method of constructing Hadamard matrices was introduced by Williamson~\cite{Williamson1944} in 1944.
He considered Hadamard matrices of the form
\[ \begin{bmatrix}
A & B & C & D \\
-B & A & -D & C \\
-C & D & A & -B \\
-D & -C & B & A
\end{bmatrix} . \]
Hadamard matrices of this form are known as \emph{quaternion-type}~\cite{Baumert1965}, since the structure is based on the matrix representation
of the fundamental quaternion units $1$, $\qi$, $\qj$, and $\qk$~\cite{hamilton}.
In Williamson's original paper, the block matrices $A$, $B$, $C$, and $D$ were pairwise commuting symmetric
$\{\pm1\}$-matrices of order~$n$ satisfying $A^2 + B^2 + C^2 + D^2 = 4n \id_n$.
Baumert and Hall~\cite{Baumert1965} point out that Williamson's pairwise commuting condition can be replaced by the
more general condition
\[ UV - VU + XY - YX = 0 \]
for $(U,V,X,Y)\in\{(A,B,C,D),(A,C,B,D),(A,D,B,C)\}$.
Additionally, the block matrices of a quaternion-type Hadamard matrix do not necessarily need to be symmetric,
but if the block matrices are not symmetric then the equations defining a quaternion-type Hadamard matrix
are given by
\begin{equation} AA^\top + BB^\top + CC^\top + DD^\top = 4n\id_n , \label{eq:sq} \end{equation}
and
\begin{equation}
\begin{split}
BA^\top  - AB^\top  + DC^\top  - CD^\top  &= 0, \\
CA^\top  - AC^\top  + BD^\top  - DB^\top  &= 0, \\
DA^\top  - AD^\top  + CB^\top  - BC^\top  &= 0.
\end{split}\label{eq:pcf}
\end{equation}
The blocks of Hadamard matrices satisfying~\eqref{eq:sq} and~\eqref{eq:pcf} are sometimes
called Williamson-type matrices, though we follow the more common convention of reserving
the name \emph{Williamson-type} for a somewhat more specific class of matrices,
namely, those satisfying~\eqref{eq:sq}
but having~\eqref{eq:pcf} replaced by the stronger \emph{pairwise amicability} condition
of $XY^\top = YX^\top$ for all $X$, $Y \in \{A,B,C,D\}$~\cite{SeberryWallis1975}.

In this paper, like in Williamson's original paper, we also require that the block matrices
$A$, $B$, $C$, $D$ be circulant, in which case we can describe the matrices in terms of their initial row.
In the case that the block matrices are circulant, we prove that
the seemingly stronger condition of pairwise amicability is actually
equivalent to the conditions~\eqref{eq:pcf} (see \Cref{thm:qt-wt-equiv}).
That is, considering the first rows of circulant
Williamson-type matrices as \emph{Williamson-type sequences} and the first rows
of circulant blocks of quaternion-type matrices as \emph{QT sequences},
we prove the notions of QT sequences and Williamson-type sequences
correspond exactly.
We provide background for understanding our results and formally define QT sequences and Williamson-type sequences
in terms of correlation in Section~\ref{sec:prelims}, with
the equivalence of QT sequences and Williamson-type sequences proven in
\Cref{sec:qt-wt-equiv}.

Prior work on circulant Williamson-type matrices also includes finding analytical constructions or obstructions. Using an analytical approach
Fitzpatrick and O'Keeffe~\cite{Fitzpatrick2023} proved that there are no Williamson-type sequences of lengths 35, 47, 53, and~59,
though their result relies on computer-assisted proofs
that there are no circulant symmetric Williamson-type matrices of order~35
(as first shown by \DJ okovi\'c~\cite{Dokovi1993}) and no circulant symmetric Williamson-type matrices of orders 47, 53, and~59
(shown by Holzmann, Kharaghani, and Tayfeh-Rezaie~\cite{Holzmann2008}).
Fitzpatrick and O'Keeffe use the amicability property of
Williamson-type matrices in their proof, so
quaternion-type Hadamard matrices with circulant blocks of orders 35, 47, 53, and~59
could conceivably exist.  The equivalence we prove in \Cref{thm:qt-wt-equiv} establishes that in fact such matrices do not.

\subsection{Quaternionic perfect sequences}

The work in this paper focuses on quaternion-type Hadamard matrices with circulant blocks, but such objects
can be shown to be equivalent to perfect sequences over a quaternionic alphabet.
The quaternions, first described by Sir William Rowan Hamilton~\cite{Hamilton1844} are an extension of the complex plane, adding two additional numbers $\qj$ and $\qk$ with $\qi^2 = \qj^2 =\qk^2 = -1$ and $\qi\qj = \qk$.
Hamilton discovered these numbers while attempting to discover ways to multiply points in three-dimensional space
in an analogous way to how complex numbers (points in two-dimensional space) can be multiplied.
Note that quaternions are not commutative in general, as $\qj\qi = -\qk$.

Kuznetsov~\cite{Kuznetsov2009} first studied perfect sequences of quaternions. He proved, among other results, that symmetries of the quaternion space preserve perfection of a sequence, and listed operations on sequences that preserve perfection.
He was able to construct a perfect quaternion sequence of length $5{,}354{,}228{,}880$, and extended many results of complex sequences to the quaternions.
In a significant development, Barrera Acevedo and Hall~\cite{AcevedoHall2012} constructed perfect sequences over the alphabet $\{\pm1,\pm\qi,\qj\}$ for all lengths $n=p^k+1\equiv2\pmod{4}$, where $p$ is prime and $k\in\N$.
For the first time this showed that perfect sequences over the basic quaternion
alphabet $\Q_8 \eqdef \{\pm1, \pm\qi, \pm\qj, \pm\qk\}$ could be found for unbounded lengths,
despite the fact that only finitely many perfect sequences are expected over the alphabet $\{\pm1,\pm\qi\}$.
Blake~\cite{BT17} ran an extensive search for perfect sequences over the alphabet $\Q_8$,
and Bright, Kotsireas, and Ganesh~\cite{BKG20-perfect-sequences} showed that perfect sequences of length~$2^k$ over $\Q_8$ exist for all~$k\in\N$.
Barrera Acevedo and Dietrich~\cite{BAD18} studied perfect quaternion sequences over $\Q_+ \eqdef \Q_8 \union q\Q_8$ where $q \eqdef (1+\qi+\qj+\qk)/2$, and built a one-to-one correspondence between perfect sequences over $\Q_+$ and quadruples of QT sequences.

In this paper we describe the results of exhaustive searches
for QT sequences (and equivalently
perfect sequences over $\Q_+$).
The previous longest length exhaustively searched was $n=13$, by Barrera Acevedo, Catháin, and Dietrich~\cite{BarreraAcevedo2019},
using an amount of compute time estimated to be up to a week~\cite{personal}.
Our algorithm exhaustively completes lengths $n\leq13$ in under~5 seconds.
Barrera Acevedo, Catháin, and Dietrich were interested in searching for
various kinds of sequences of prime length generating cocyclic Hadamard matrices (including
QT sequences as a special case),
but their search algorithm was unable to reach lengths~17 and~19.
We are able to exhaustively enumerate QT sequences
for lengths $n\leq20$ in under~8 hours and for length $n=21$ in under~40 hours
(see Section~\ref{sec:search} for our results).
The equivalence between QT sequences and Williamson-type sequences mentioned in \Cref{sec:qt-intro} plays a key role in the search, dramatically improving the amount
of filtering possible in the enumeration algorithm described in \Cref{sec:search}.
Applying this equivalence
improves the amount of filtering performed by our algorithm
by a factor of about 25,000 when enumerating QT sequences of length $n=20$.
Thus, the equivalence is not only of theoretical interest, but also of practical use.

Williamson-type matrices that are not necessarily symmetric have been studied
for over fifty years~\cite{Wallis1973,Wallis1974}, but in stark contrast to the symmetric case
we were surprised to find no
enumerations specifically of circulant Williamson-type matrices \emph{without} the assumption of symmetry.
This may be due to the fact that the assumption of symmetry dramatically decreases the search space
and simplifies the search, allowing more constraining properties to be exploited and more extensive
enumerations to be performed (including all orders~$n$ up to 60~\cite{BKG20-williamson-conjecture,Holzmann2008}).
For example, the conditions~\eqref{eq:pcf} become trivially satisfied
when the blocks $A$, $B$, $C$, $D$ are symmetric and circulant.
The enumeration
of Barrera Acevedo, Catháin, and Dietrich~\cite{BarreraAcevedo2019} of cocyclic Hadamard matrices
of order~$4p$ for primes $p\leq13$ is the most extensive exhaustive enumeration
we are aware of that does not rely on the assumption of symmetry.
Since circulant Williamson-type matrices generate
cocyclic Hadamard matrices, as a special case of their results they generated
all Williamson-type sequences of lengths $p\leq 13$ for prime $p$. The efficiencies built into the computational search in \Cref{sec:search} allow us to enumerate all Williamson-type sequences of length~$n$ for $n\leq21$.

\subsection{Quaternionic Hadamard matrices}
\label{sec-qhm-context}

A \emph{quaternionic Hadamard matrix} (QHM) of order~$n$ is an~$n \times n$ matrix~$G$ with unit quaternion entries such that~$GG^* = n\id$, where~$G^*$ is the conjugate transpose of~$G$.
Farkas, Kaniewski, and Nayak~\cite{FKN23-mum-superdense-coding} recently drew a connection between quaternionic Hadamard matrices and quantum information, rekindling interest in the construction of QHMs. In more detail, using an algebra isomorphism between quaternions and a subalgebra of the set of complex~$2 \times 2$ matrices, they used QHMs to construct Hadamard matrices of \emph{unitary operators\/}. The latter correspond to mutually unbiased measurements (MUMs), which are extensions of mutually unbiased bases (MUBs).
Consequently, every order~$n$ quaternionic Hadamard matrix corresponds to an ``$n$-outcome'' MUM\@.
We refer the interested reader to~\cite{FKN23-mum-superdense-coding} for the definitions of these concepts and their relevance in quantum information and communication.

Some MUMs can be derived from MUBs via a direct-sum construction.
Farkas, Kaniewski, and Nayak~\cite{FKN23-mum-superdense-coding} provided a complete proof of the characterization of MUMs that are direct sums of MUBs.
This characterization was
partially proven earlier by Tavakoli, Farkas, Rosset, Bancal, and Kaniewski~\cite{TFR+21-mubs}.
The characterization implies that an MUM corresponding to a normalized quaternionic Hadamard matrix
is a direct sum of MUBs if and only if all the elements of the quaternionic Hadamard matrix commute.

Farkas, Kaniewski, and Nayak showed that MUMs that are \emph{not\/} direct sums of MUBs lead to novel quantum communication protocols for superdense coding. This connection led them to the construction of new such MUMs. They presented explicit such MUMs for several orders and showed how these may be used to derive such MUMs for infinitely many orders. The MUMs were either discovered earlier by an ad hoc computational search~\cite{TFR+21-mubs}, or were derived more systematically from QHMs for a small set of orders~\cite{FKN23-mum-superdense-coding}. This motivates the study of QHMs of arbitrary order in more depth.

Chterental and {\DJ}okovi{\'c}~\cite{CD08-stochastic-matrices} observed that QHMs of order up to three are all equivalent to complex Hadamard matrices. They characterized all QHMs of order four by showing that there are precisely two families of such matrices, each of uncountably infinite cardinality. They derived explicit expressions for the matrices in terms of a few parameters. Higginbotham and Worley~\cite{HW22-qhm} continued in similar vein, studying order five QHMs of specific types and also more general constructions of QHMs.

Barrera Acevedo, Dietrich, and Lionis~\cite{BarreraAcevedo2024} provided a number of constructions of quaternionic Hadamard matrices by generalizing methods for real and complex Hadamard matrices. These constructions lead to new normalized QHMs with noncommuting entries for infinitely many orders. The constructions also include infinite families of nonequivalent such matrices for a fixed order.

Unlike the real case, quaternionic Hadamard matrices of all orders exist. The complex Fourier matrices are well-known examples.
The above works suggest the existence of multiple equivalence classes of QHMs of all orders greater than three, and even multiple infinite equivalence classes of normalized QHMs with noncommuting entries (and the corresponding MUMs) of a fixed order. Spurred by this, we continue the computational and analytical investigation of QHMs.

Circulant QHMs are equivalent to perfect quaternion sequences. The computational enumeration of QT sequences thus yields new perfect quaternionic sequences, and hence new quaternionic Hadamard matrices. The normalized matrices typically contain non-commuting entries, and hence lead to new MUMs of interest in quantum information. We list several examples of such QHMs in \Cref{subsection:patterns}.
This is in contrast to earlier computational search for non-trivial MUMs, which resulted in MUMs (those reported in~\cite{TFR+21-mubs})
for only a few small orders~\cite{Farkas21}.

We show that unlike in the case of real or complex Hadamard matrices, QHMs admit equivalence classes with uncountably many normalized matrices whenever they have at least one non-real entry in normalized form (\Cref{prop-infinite-family}).
We also propose an approach for characterizing QHMs of small order and illustrate it for order five matrices (\Cref{sec-characterization}).
Finally, we prove the order five QHMs generated by perfect $\Q_+$-sequences produce QHMs not equivalent to those
considered by Higginbotham and Worley~\cite{Higginbotham2021}, and similarly in order seven there are QHMs generated
by perfect $\Q_+$-sequences not equivalent to the QHMs generated by the perfect sequences
listed by Kuznetsov~\cite{Kuznetsov2010} and Barrera Acevedo, Dietrich, and Lionis~\cite{BarreraAcevedo2024} (see \Cref{sec-new-qhm}).
Consequently, there are at least three nonequivalent families of QHMs in orders five and seven;
along with prior work, this leads us to conjecture
there exist multiple nonequivalent quaternionic Hadamard matrices for all orders~$\ge 4$.
Our results imply this conjecture is not true if only \emph{circulant} QHMs over $\Q_+$ are considered, though.
The equivalence provided in \Cref{thm:qt-wt-equiv}, in conjunction with the results of Fitzpatrick
and O'Keeffe~\cite{Fitzpatrick2023}, imply circulant
quaternionic Hadamard matrices of orders
35, 47, 53, and~59 having entries in $\Q_+$ do not exist.

\section{Background}
\label{sec:prelims}

In this section we provide the notations and the necessary background that we use.
First, we introduce the quaternion algebra and the notation we use to denote important sets of quaternions in \Cref{sec:quat}.
We formally define Hadamard and quaternionic Hadamard matrices
and the notion of equivalence between Hadamard matrices in \Cref{sec:matrices}.
Next, we formally define the correlation of sequences
as well as perfect sequences, QT sequences, and Williamson-type sequences in \Cref{sec:sequences},
including a one-to-one correspondence between certain quaternionic perfect sequences and QT sequences
proven by Barrera Acevedo and Dietrich~\cite{BAD18}.
We then prove a one-to-one correspondence between QT sequences and Williamson-type sequences in \Cref{sec:qt-wt-equiv}.
Finally, we define notions of equivalence of QT sequences in \Cref{sec:equiv} that we later rely on to classify our QT sequence enumeration results.

\subsection{Quaternions}\label{sec:quat}

The quaternions may be viewed as a noncommutative extension of the complex numbers. 
Formally, the real algebra of quaternions is represented as
\begin{equation*}
\HH \eqdef \{\, \alpha + \beta \qi + \gamma \qj + \delta \qk : \alpha, \beta, \gamma, \delta \in \reals \,\} ,
\end{equation*}
where~$\qi$, $\qj$, and $\qk$ are the \emph{basic unit quaternions\/} satisfying the relations 
\[
\qi^2 = \qj^2 = \qk^2 = -1, \quad \qi \qj = \qk, \quad\text{and}\quad \qj \qi = -\qk .
\]
The number~$\alpha$ is the \emph{real part\/} of the quaternion~$\alpha + \beta \qi + \gamma \qj + \delta \qk$, and~$\beta \qi + \gamma \qj + \delta \qk$ its \emph{pure imaginary\/} part.

The algebra is endowed with a conjugation operation $x\mapsto x^*$ satisfying~$\alpha^\ast = \alpha$ for real $\alpha$
along with $\qi^\ast = -\qi$, $\qj^\ast = -\qj$, and  $\qk^\ast = -\qk$. The real part~$\Re(w)$ of a quaternion~$w$ can thus be expressed as~$\Re(w) = (w + w^*)/2$. 

Conjugation distributes over addition and satisfies~$(x_1 x_2)^\ast = x_2^\ast x_1^\ast$.
The norm~$\norm{x}$ of a quaternion~$x$ is given by
\[
\norm{x} \eqdef \sqrt{x x^\ast} \eq \sqrt{x^\ast x} , 
\]
and when~$x \neq 0$, its multiplicative inverse~$x^{-1}$ is given by~$x^{-1} \eqdef x^\ast / \norm{x}$.
The quaternions with norm~$1$ are the \emph{unit quaternions\/}, and we denote the set of unit quaternions by
\[
\qsphere^3 \eqdef \{\, w \in \HH : \norm{w} = 1 \,\} .
\]
The unit quaternions form a non-abelian group under the multiplication operation.
We denote the set of quaternionic square roots of~$-1$ by~$\qsphere^2$. That is,
\[
\qsphere^2 \eqdef \{\, w \in \HH : w^2=-1 \,\} ,
\]
and it can be shown that 
\[
\qsphere^2 \eq \{\, w \in \qsphere^3 : \Re(w)=0 \,\} . 
\]
So the elements of~$\qsphere^2$ are pure imaginary quaternions.

We define $q \eqdef (1+\qi+\qj+\qk)/2$. This is a unit quaternion that is a cube root of $-1$.
Three important sets of unit quaternions in this paper are
\[
\Q_8 \eqdef \{\pm1,\pm\qi, \pm\qj, \pm\qk\}, \quad \Q_+ \eqdef \Q_8 \cup \, q\Q_8 \,, \quad \text{and }  \quad \Q_{24} \eqdef \Q_+\cup \, q^2\Q_8 \,.
\]
The sets~$\Q_8$ and~$\Q_{24}$ are groups under multiplication, but~$\Q_+$ is not since $q^2=-q^*\notin\Q_+$.

Euler's formula $e^{\qi t}=\cos t+\qi\sin t$ generalizes to the quaternions via the power series expansion for the exponential function~\cite{Cho98-demoivre}. For $t \in \R$ and~$w\in \qsphere^2$, we have
\begin{equation*}
\label{eq-euler}
\e^{wt} = \cos t + w \sin t .
\end{equation*}

\subsection{Hadamard matrices}\label{sec:matrices}

A square matrix~$H$ of order $n$ is a \emph{(real) Hadamard matrix\/} if~$H$ has~$\pm1$ entries and $HH^\top = n \id_n$. In other words, all the entries of the matrix have unit magnitude and the rows of the matrix are mutually orthogonal. This implies that the columns of the matrix are also mutually orthogonal.

For a quaternionic matrix~$G$ of order~$n$, the matrix~$G^\adjoint$ denotes the conjugate transpose of~$G$, i.e., $(G^\adjoint)_{ij} \eqdef (G_{ji})^*$ for all 
indices~$i,j \in \set{0,1, \dotsc, n-1}$. 
The matrix~$G$ is a \emph{quaternionic Hadamard matrix\/} (QHM) if~$G$ has entries in $\qsphere^3$ and $G G^\adjoint = n\id_n$. The latter condition requires the rows of the matrix to be orthogonal with respect to a sesquilinear form on the~$\HH$-module~$\HH^n$ that is conjugate-linear in the second argument. We can show that this condition implies~$G^\adjoint G = n \id_n$, i.e., the columns of~$G$ are orthogonal with respect to a \emph{different\/} sesquilinear form that is conjugate-linear in the \emph{first\/} argument. Due to the non-commutativity of multiplication in~$\HH$, the use of the two different sesquilinear forms is important; orthogonality with respect to one does not necessarily imply orthogonality with respect to the other.

We may view the set of reals as a subset of quaternions with~$0$ pure imaginary part, and the set of complex numbers as a subset of quaternions generated by~$\set{1,\qi}$. So a complex Hadamard matrix may be viewed as a QHM with complex entries.

In the rest of the paper, we consider Hadamard matrices up to equivalence. Two real Hadamard matrices are considered to be equivalent if one can be obtained from the other by negating rows or columns, by interchanging rows, or by interchanging columns~\cite{McKay1979}.
We extend this equivalence to QT sequences: two QT sequences are said to be \emph{Hadamard equivalent}
if they generate equivalent quaternion-type Hadamard matrices.
Similarly, two quaternionic Hadamard matrices are said to be equivalent if one can be obtained from the other by multiplying rows \emph{on the left\/} by unit quaternions, by multiplying columns \emph{on the right\/} by unit quaternions, by interchanging rows, or by interchanging columns. Since the multiplication of quaternions is not commutative, the order of multiplication in these operations is important. The equivalence of two QHMs may be characterized as follows.
\begin{lemma}
\label{lem-equivalence}
Two QHMs~$G$, $G'$ are equivalent if and only if there are permutation matrices~$P_1$, $P_2$ and diagonal matrices~$D_1$, $D_2$ of unit quaternions such that~$G' = D_1 P_1 G P_2 D_2$.
\end{lemma}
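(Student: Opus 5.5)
We prove both directions by expressing each elementary operation as multiplication by a suitable matrix, and then showing that any composite of these matrices can be put in the normal form $D_1P_1GP_2D_2$.

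For the forward direction, note that multiplying row $i$ of $G$ on the left by a unit quaternion $u$ is the same as forming $E G$, where $E$ is the diagonal matrix with $u$ in position $i$ and $1$ elsewhere. This holds because $(EG)_{ij} = E_{ii}G_{ij}$, and the scalar stays on the left. Similarly, multiplying column $j$ on the right by $u$ is $GE$. Interchanging rows is $PG$ and interchanging columns is $GP$ for a permutation matrix $P$. Since permutation matrices have real entries, they commute entrywise with quaternions, so matrix multiplication with quaternion entries is still associative. If $G'$ is equivalent to $G$, it is therefore obtained as $G' = L_k\cdots L_1\, G\, R_1\cdots R_m$, where each $L_t$ is a unit-diagonal or permutation matrix acting on the left, and each $R_t$ is such a matrix acting on the right. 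The key step is to move all permutations inward. For a permutation matrix $P$ with permutation $\sigma$ and a diagonal $D=\diag(d_0,\dots,d_{n-1})$, a direct entry computation gives $PD = D^\sigma P$, where $D^\sigma$ is diagonal with the same entries permuted. This uses only that $P$ has $0/1$ entries, so no commutativity of the $d_i$ is needed. Likewise $DP = P D^{\sigma'}$. Products of unit diagonal matrices are unit diagonal, and products of permutation matrices are permutation matrices. Repeatedly applying these identities turns the left product into $D_1P_1$ and the right product into $P_2D_2$.

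For the converse, suppose $G' = D_1P_1GP_2D_2$. Then $P_1$ is a product of transpositions, so $P_1 G$ is obtained by row interchanges. Left-multiplying by $D_1$ multiplies each row $i$ on the left by the unit quaternion $(D_1)_{ii}$. The column operations on the right are handled symmetrically. Hence $G'$ is equivalent to $G$.

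The only point requiring care is the noncommutativity of $\HH$. We must check that the commutation relation $PD = D^\sigma P$ and associativity of the matrix products remain valid, and that left-diagonal factors never need to pass through $G$. Both checks are entrywise calculations in which every product has at most one quaternion factor on each side of $G_{ij}$, with real $0/1$ coefficients elsewhere. So the argument goes through exactly as in the commutative case.
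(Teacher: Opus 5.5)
Your proposal is correct. The paper states this lemma without proof, treating it as routine. Your argument supplies the standard verification it implicitly relies on: encode each elementary operation as left or right multiplication by an elementary diagonal or permutation matrix, then use $PD = D^{\sigma}P$ and $DP = PD^{\sigma'}$ to collect the factors into the form $D_1P_1\,G\,P_2D_2$.

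Two small remarks.

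First, associativity of matrix multiplication holds over any associative ring, commutative or not, so it does not depend on permutation matrices having real entries. That justification is a non sequitur, though the claim itself is true. The real/$0$--$1$ nature of $P$ is only what makes the entrywise identity $P_{ij}d_j = d_{\sigma(i)}P_{ij}$ trivial.

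Second, the definition says $G$ and $G'$ are equivalent if \emph{one} can be obtained from the other. You should note that this is symmetric: each elementary operation is undone by one of the same type (multiply by $u^*$, or repeat the swap). Alternatively, note that the normal form is closed under inversion, since $G = D_1P_1G'P_2D_2$ gives $G' = P_1^{\top}D_1^{*}\,G\,D_2^{*}P_2^{\top}$, which your commutation identity rewrites in the required form.
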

An analogous lemma holds for real or complex Hadamard matrices. In the real case, the diagonals of~$D_1$ and~$D_2$ are~$\pm 1$, and in the complex case, they consist of unit complex numbers.

We say that a real, complex, or quaternionic Hadamard matrix is \emph{normalized\/} if the entries in the first row and the first column are all~`$1$'. Observe that every Hadamard matrix is equivalent to a normalized Hadamard matrix; we may multiply each row on the left by the conjugate of its first entry, and similarly multiply each column on the right by the conjugate of its first entry to achieve this. We refer to these operations as `dephasing'.

Given a quaternionic Hadamard matrix~$G$ and an algebra automorphism~$f$ of the quaternions, we may verify that the matrix~$f(G)$ defined by~$(f(G))_{ij} \eqdef f(G_{ij})$ for all indices~$0 \le i,j < n$ is also a quaternionic Hadamard matrix. It so happens that every automorphism of the quaternions corresponds to an \emph{inner\/} automorphism.
\begin{proposition}[Proposition~2.4.7, page~19, \cite{Rodman14-quaternion-linear-algebra}]
\label{prop-inner-aut}
Every automorphism of~$\HH$ is \emph{inner\/}, i.e., if~$f \colon \HH \rightarrow \HH$ is an automorphism, then there exists~$a \in \qsphere^2$ such that~$f(x) = a^* x a$, for all~$x \in \HH$. 
\end{proposition}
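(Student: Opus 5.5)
The approach is to show that any automorphism $f$ of $\HH$ acts on the pure imaginary quaternions as a rotation. We then show that every rotation of $\R^3$ is realized by a map $x\mapsto a^*xa$. I expect the result to require $a\in\qsphere^3$ rather than $\qsphere^2$: for pure imaginary $a$ the map $x\mapsto a^*xa$ is always a rotation by angle $\pi$, which does not exhaust $\so(3)$. So I will prove the statement with $a$ a unit quaternion.

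First, I reduce to $\R$-linear maps. A ring automorphism preserves the center of $\HH$, which is $\R$. The only ring automorphism of $\R$ is the identity, because it preserves squares and hence order. So $f$ fixes $\R$ pointwise and is $\R$-linear, and in particular $f(1)=1$.

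Next, $f$ maps $\qsphere^2$ into itself, since $w^2=-1$ implies $f(w)^2=f(-1)=-1$. Put $u\eqdef f(\qi)$, $v\eqdef f(\qj)$ and $u\,v=f(\qk)$. Then $u,v\in\qsphere^2$, and $uv=-vu$ because $\qi\qj=-\qj\qi$.

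I then identify pure imaginary quaternions with $\R^3$. For pure imaginary $u,v$ we have $uv=-u\cdot v+u\times v$. So the relation $uv=-vu$ forces $u\cdot v=0$, and then $f(\qk)=uv=u\times v$. Hence $f$ maps the orthonormal basis $\qi,\qj,\qk$ to a positively oriented orthonormal basis $u,v,u\times v$. By linearity, $f$ restricted to the pure imaginary part is an element $R\in\so(3)$, and $f$ is the identity on the real part.

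The main step is to show that every $R\in\so(3)$ has the form $x\mapsto a^*xa$ with $a\in\qsphere^3$. By Euler's rotation theorem, $R$ is a rotation by some angle $\phi$ about a unit axis $w\in\qsphere^2$.

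Take $a=\e^{-w\phi/2}=\cos(\phi/2)-w\sin(\phi/2)$, using the quaternionic Euler formula. Then $a^*wa=w$, since $w$ commutes with $a$. For a pure $x$ orthogonal to $w$, we have $xw=-wx$, so $xa=a^*x$ and
\[ a^*xa=(a^*)^2x=\e^{w\phi}x=\cos\phi\, x+\sin\phi\, wx , \]
with $wx=w\times x$. This is exactly rotation by $\phi$ about $w$; the main care needed here is getting the sign of $\phi$ right.

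Finally, both $f$ and $x\mapsto a^*xa$ are $\R$-linear, fix $1$, and agree on $\qi,\qj,\qk$, so they are equal. This yields $f(x)=a^*xa$ for all $x\in\HH$.
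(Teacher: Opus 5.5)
You are right to prove the statement with $a\in\qsphere^3$: as printed, with $a\in\qsphere^2$, it is false. If $a\in\qsphere^2$ then $a^*=-a$, so $a^*aa=a$, while $a^*xa=a^2x=-x$ for every pure $x$ orthogonal to $a$. Hence $x\mapsto a^*xa$ is always the half-turn about $a$, and the identity automorphism, for instance, is not of this form. The unit-quaternion version is the standard one, and it is all the paper uses later. \Cref{cor:automorphism-equiv} and the proof of \Cref{prop-infinite-family} only need $x\mapsto a^*xa$ to be realized by multiplying rows on the left by $a^*$ and columns on the right by $a$, which requires only that $a$ be a unit quaternion.

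The paper gives no proof here; it cites Rodman. Your argument is a correct, self-contained proof of the corrected statement, and each step checks out:
\begin{itemize}
\item $f$ fixes $\R$. Your center/order argument even covers ring automorphisms, which is more than the paper needs, since it only considers algebra automorphisms.
\item The identity $uv=-u\cdot v+u\times v$ shows that $f$ acts on the pure imaginary part by an element of $\so(3)$.
\item The half-angle quaternion $a=\cos(\phi/2)-w\sin(\phi/2)$ realizes the rotation by $\phi$ about $w$ with the correct orientation, since $a^*xa=\cos\phi\,x+\sin\phi\,(w\times x)$ for pure $x\perp w$.
\end{itemize}
A by-product worth noting: $f$ is determined by its restriction to the pure imaginaries, and every rotation arises as some $x\mapsto a^*xa$. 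So your argument also proves the isomorphism between the automorphism group of $\HH$ and $\so(3)$, which the paper cites separately as \Cref{thm-automorphism}. The shorter abstract alternative is Skolem--Noether applied to the central simple algebra $\HH$. It gives innerness at once but not the explicit rotation picture you obtain.
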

Note that the application of an inner automorphism to every entry of a matrix may be achieved by an equivalence operation; we may multiply each row on the left by~$a^*$ and each column on the right by~$a$. So the application of automorphisms results in equivalent QHMs.
\begin{corollary}
\label{cor:automorphism-equiv}
Applying an automorphism of\/ $\HH$ to each entry of a quaternionic Hadamard matrix
produces an equivalent quaternionic Hadamard matrix.
\end{corollary}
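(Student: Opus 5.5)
The plan is to combine \Cref{prop-inner-aut} with \Cref{lem-equivalence}. Let $G$ be a quaternionic Hadamard matrix of order~$n$ and let $f$ be an automorphism of~$\HH$.

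First, I will confirm that $f(G)$ is again a QHM. This is needed only to make sense of ``equivalent QHM.'' It will also follow from the explicit form below.

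Next, by \Cref{prop-inner-aut} there is $a\in\qsphere^2$ with $f(x)=a^*xa$ for all $x\in\HH$. Since $a\in\qsphere^2\subseteq\qsphere^3$, we have $\norm{a}=1$, so $a^*$ and $a$ are unit quaternions. Let $D_1\eqdef a^*\id_n$ and $D_2\eqdef a\id_n$. These are diagonal matrices of unit quaternions. Computing entrywise,
\[ (D_1 G D_2)_{ij} = a^* G_{ij} a = f(G_{ij}) = (f(G))_{ij}, \]
so $f(G)=D_1\id_n G\,\id_n D_2$.

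This identity gives the QHM property directly. Each entry $a^*G_{ij}a$ is a unit quaternion because the norm is multiplicative. We also have $aa^*=a^*a=1$, and hence
\[ f(G)f(G)^* = a^*G\,(aa^*)\,G^*a = a^*(n\id_n)a = n\id_n. \]

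Finally, taking $P_1=P_2=\id_n$ in \Cref{lem-equivalence} shows that $f(G)$ is equivalent to~$G$. Equivalently, in terms of the elementary operations, we multiply each row on the left by~$a^*$ and each column on the right by~$a$.

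There is no real obstacle here. The only point needing care is the order of multiplication: $a^*$ must act on the left and $a$ on the right. This matches exactly the row-left and column-right convention in the definition of equivalence.
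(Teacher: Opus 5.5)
Your proposal is correct and matches the paper's argument: the paper also writes the automorphism as $x\mapsto a^*xa$ via \Cref{prop-inner-aut} and realizes it by multiplying each row on the left by $a^*$ and each column on the right by $a$, and your check that $f(G)f(G)^*=n\id_n$ just supplies the verification the paper leaves to the reader. It is good that you use only $\norm{a}=1$, because conjugation by an element of $\qsphere^2$ composed with itself is the identity (as $a^2=-1$), whereas not every automorphism of $\HH$ is an involution; so \Cref{prop-inner-aut} really holds only with $a\in\qsphere^3$, and your argument goes through unchanged in that corrected form.
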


\subsection{Perfect, QT, and Williamson-type sequences}\label{sec:sequences}

Let~$A$ and~$B$ be sequences of length $n$, with $A = [a_0, \dotsc, a_{n-1}]$ and~$B = [b_0, \dotsc, b_{n-1}]$.
For a \emph{shift\/} $0 \le t < n$, the \emph{periodic crosscorrelation\/}~$R_{A,B}(t)$ of~$A$ and~$B$ is defined as
\[
R_{A,B}(t)\eqdef \sum_{r=0}^{n-1} a_r b^*_{r+t\bmod n} .
\]
We further define the \emph{right periodic autocorrelation\/}~$R_A^R(t)$ of~$A$ and the \emph{left periodic autocorrelation\/}~$R_A^L(t)$ of $A$ as
\begin{equation*}
    R_A^R(t)\eqdef\sum_{r=0}^{n-1}a_r a_{r+t\bmod n}^*,\quad\text{and}\quad
    R_A^L(t)\eqdef\sum_{r=0}^{n-1}a_r^*a_{r+t\bmod n},
\end{equation*}
respectively. 
A sequence~$S$ is called \emph{right perfect\/} if its right periodic~$t$-autocorrelation vanishes for all non-zero shifts~$t$, i.e., for all~$t$ with~$1 \leq t < n$, we have~$R^L_S(t) = 0$. Left perfection of a sequence is defined in a similar way. Kuznetsov~\cite{Kuznetsov2009} proved that a quaternion sequence is right perfect if and only if it is left perfect. Thus without loss of generality, we define the \emph{periodic autocorrelation\/}~$R_A(t)$ of $A$ as
\[
R_A(t)\eqdef R^R_A(t) = R_{A,A}(t)=\sum_{r=0}^{n-1}a_r a_{r+t\bmod n}^*,
\]
and call the sequence $A$ \emph{perfect\/} if its periodic autocorrelation values vanish for all non-zero $t$.
A perfect sequence over~$\HH$ is called a \emph{perfect quaternion sequence\/}. 

We refer to sequences with~$\{\pm1\}$ entries as \emph{binary\/} sequences.
Let $A$, $B$, $C$, and $D$ each be binary sequences of length~$n$.
Denote the Kronecker delta by~$\diracdelta_{0,t}$ (note $\diracdelta_{0,t}$ is $1$ if $t=0$ and $0$ otherwise).
We say that~$(A,B,C,D)$ is a quadruple of \emph{QT sequences} if for all $0\leq t < n$ we have
\begin{align}
\label{eq-ac}
R_{A}(t)+R_B(t)+R_C(t)+R_D(t) & \eq \diracdelta_{0,t} 4n , \\
\label{eq-cc1}
R_{A,B}(t)-R_{B,A}(t) + R_{C,D}(t)-R_{D,C}(t) & \eq 0 , \\
\label{eq-cc2}
R_{A,C}(t)-R_{C,A}(t) + R_{D,B}(t)-R_{B,D}(t) & \eq 0 , \\
\label{eq-cc3}
R_{A,D}(t)-R_{D,A}(t) + R_{B,C}(t)-R_{C,B}(t) & \eq 0.
\end{align}
We call~\eqref{eq-ac} the \emph{autocorrelation condition} and~\eqref{eq-cc1}--\eqref{eq-cc3} the \emph{crosscorrelation conditions}.
If the crosscorrelation conditions are replaced by the conditions
$R_{X,Y}(t)-R_{Y,X}(t)=0$ for all $X$, $Y\in\{A,B,C,D\}$
then the sequences in the quadruple are known as \emph{Williamson-type}.
QT sequences are equivalent to
quaternion-type Hadamard matrices with circulant blocks (as defined in the introduction)
and similarly Williamson-type sequences are equivalent to the first rows of
circulant Williamson-type matrices.
If a quadruple of Williamson-type sequences generate \emph{symmetric} circulant
block matrices then they are called \emph{Williamson sequences}~\cite{BKG20-williamson-conjecture}.
Note a sequence $[a_0,\dotsc,a_{n-1}]$ generates a circulant symmetric matrix when
$a_i=a_{n-i}$ for all $1\leq i<n$.

Barrera Acevedo and Dietrich~\cite{BAD18} showed that perfect quaternion sequences over~$\Q_+$ of length~$n$ are equivalent to
a certain kind of relative difference set in $C_n \times \Q_8$, where $C_n$ denotes the cyclic group of order~$n$.
A result of Schmidt~\cite{Schmidt1999} shows this kind of relative difference set in $C_n \times \Q_8$ is
equivalent to quaternion-type Hadamard matrices whose blocks are circulant
(but not necessarily pairwise amicable or symmetric).
Precisely, we have the following theorem.
\begin{theorem}[{cf.~\cite[Thm.~2.4]{BAD19}}]
\label{thm-bad-correspondence}
There is a one-to-one correspondence between quadruples of QT sequences of length\/~$n$ and perfect sequences of length\/~$n$ over\/~$\Q_+$.
\end{theorem}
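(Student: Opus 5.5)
Our plan is to write down the correspondence explicitly and check that perfection of the quaternion sequence is the same condition as the QT conditions \eqref{eq-ac}--\eqref{eq-cc3}. Given binary sequences $A,B,C,D$ of length~$n$, set
\[
s_r \eqdef \tfrac12\bigl(a_r + b_r\qi + c_r\qj + d_r\qk\bigr), \qquad 0\le r<n .
\]
Since $a_r,b_r,c_r,d_r\in\{\pm1\}$, each $s_r$ has the form $(\pm1\pm\qi\pm\qj\pm\qk)/2$. These are exactly the sixteen elements of $\Q_{24}\setminus\Q_8$. The set $q\Q_8$ consists of only eight of them, so the map does not land in $\Q_+$ as it stands. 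We therefore twist it by a fixed unit: define $S=[\sigma_0,\dots,\sigma_{n-1}]$ with $\sigma_r \eqdef \tfrac12 (1+\qi)\, s_r\,$-type combinations, or more concretely take $\sigma_r = u\, s_r$ for a fixed $u$ with $\norm{u}=1$. Left multiplication by a constant unit $u$ preserves right autocorrelation, because $R_{uS}(t)=u\,R_S(t)\,u^*$. So perfection is unaffected by this twist.

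The key step is the choice of $u$. Left multiplication by $u=(1+\qi)/\sqrt2$ sends $\{(\pm1\pm\qi\pm\qj\pm\qk)/2\}$ bijectively onto a set consisting of elements of the form $(\pm1\pm\qi)/\sqrt2$ and $(\pm\qj\pm\qk)/\sqrt2$. That is not $\Q_+$ either, so this choice fails. The cleaner route is to split the sixteen elements by their real part, or equivalently to realize $\Q_+ = \Q_8\cup q\Q_8$ as the image of the sixteen values $(\pm1\pm\qi\pm\qj\pm\qk)/2$ under an affine-type relabeling. The first thing I would try is the formula of Barrera Acevedo--Dietrich, which reads off $\sigma_r$ from the sign pattern $(a_r,b_r,c_r,d_r)$:
\begin{itemize}
\item when the pattern has an even number of minus signs, $\sigma_r$ is the appropriate element of $\Q_8$;
\item otherwise $\sigma_r$ is the appropriate element of $q\Q_8$.
\end{itemize}
Both pieces have sixteen elements, which makes this a plausible bijection $\{\pm1\}^4\to\Q_+$. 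The $\{\pm1\}^4\to\Q_+$ dictionary is then obtained entrywise. I expect that, after conjugating by a fixed automorphism and multiplying by a fixed unit, it coincides with $s_r$ up to a linear change of coordinates on $\R^4$ that is orthogonal and respects the quaternion product structure used below.

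Next we expand the autocorrelation of $s$ as a bilinear object. Write $s_r = \tfrac12\sum_{X} x_r e_X$ with $e_A=1$, $e_B=\qi$, $e_C=\qj$, $e_D=\qk$. Then
\[
R_s(t) = \tfrac14\sum_{X,Y} e_X e_Y^*\, R_{X,Y}(t).
\]
The real part collects the $X=Y$ terms and equals $\tfrac14\sum_X R_X(t)$. For each imaginary unit, the coefficient collects pairs $(X,Y)$ with $e_Xe_Y^*=\pm$ that unit. For $\qi$ this gives $R_{B,A}-R_{A,B}+R_{D,C}-R_{C,D}$, up to an overall sign. Similar signed combinations appear for $\qj$ and $\qk$. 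These are precisely \eqref{eq-cc1}--\eqref{eq-cc3}, and the real part is \eqref{eq-ac}. Hence, for $t\neq0$, $R_s(t)=0$ if and only if the quadruple satisfies the QT conditions at~$t$. At $t=0$ the conditions hold automatically, since $R_s(0)=n$ and the cross terms cancel.

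Injectivity and surjectivity are then immediate: the entrywise dictionary is a bijection, and it preserves perfection by the preceding step together with the unit and automorphism invariance noted above. The main obstacle I anticipate is the dictionary step, namely matching the sixteen values $(\pm1\pm\qi\pm\qj\pm\qk)/2$ with $\Q_+$ in a way that is compatible with correlation. Perfection is invariant under constant left and right unit multiplication and under automorphisms, but an arbitrary entrywise bijection does not preserve it. If no such compatible identification exists, I would fall back on citing the relative difference set argument of \cite{BAD18} combined with Schmidt's result \cite{Schmidt1999}, as the paper indicates.
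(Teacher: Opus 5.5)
\paragraph{Gap: the map onto $\Q_+$.} The missing piece is the step you flag yourself as the main obstacle.

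Your expansion of $R_s(t)$ is correct. The real part is $\tfrac14\sum_X R_X(t)$, and the $\qi$, $\qj$, $\qk$ components are $-\tfrac14$ times the left-hand sides of \eqref{eq-cc1}, \eqref{eq-cc2}, \eqref{eq-cc3}. So $(A,B,C,D)$ is QT if and only if $s_r=\tfrac12(a_r+b_r\qi+c_r\qj+d_r\qk)$ defines a perfect sequence.

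That only gives a correspondence with perfect sequences over $\mathcal{T}\eqdef\{(\pm1\pm\qi\pm\qj\pm\qk)/2\}$. You never supply a correlation-compatible bijection $\mathcal{T}\to\Q_+$:
\begin{itemize}
\item the parity rule is stated without the actual assignment;
\item each parity class has eight elements, not sixteen, matching $|\Q_8|=|q\Q_8|=8$;
\item the hoped-for identification ``up to an orthogonal change of coordinates respecting the product'' is not established;
\item as you note, an arbitrary entrywise relabelling does not preserve perfection.
\end{itemize}
Falling back on a citation is not a proof, so as written the argument never reaches $\Q_+$.

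\paragraph{How to close it.} Your first instinct of twisting by a constant unit works; you just picked the wrong unit.

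Split $\mathcal{T}$ by the parity of minus signs. This gives $\mathcal{T}=q\Q_8\cup q^2\Q_8$: for example, $q\qi=\tfrac12(-1+\qi+\qj-\qk)$ has an even number of minus signs, while $q^2=\tfrac12(-1+\qi+\qj+\qk)$ has an odd number. Since $q^3=-1$ and $-\Q_8=\Q_8$, we get $q^2\mathcal{T}=q^3\Q_8\cup q^4\Q_8=\Q_8\cup q\Q_8=\Q_+$.

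So $\sigma_r\eqdef q^2 s_r$ is a bijection $\{\pm1\}^4\to\Q_+$. Moreover $R_\sigma(t)=q^2R_s(t)(q^2)^*$, so perfection transfers in both directions.

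This reproduces the paper's table exactly. For example, $(-1,-1,-1,-1)\mapsto q^2(-q)=1$, $(1,-1,-1,1)\mapsto\qi$, and $(1,-1,-1,-1)\mapsto q^2q^*=q$. Even parity lands in $\Q_8$, as you guessed.

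\paragraph{Comparison with the paper.} With this patch your argument becomes a complete, elementary, self-contained proof. The paper itself does not prove the statement; it cites \cite{BAD19}, whose route goes through relative difference sets in $C_n\times\Q_8$ \cite{BAD18} combined with Schmidt's link to quaternion-type matrices \cite{Schmidt1999}. Your direct computation, once completed, also explains why the table has the form it does.
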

This correspondence can be described by a map between the $r$th entry~$s_r$ of a perfect quaternion sequence~$S$ and the quadruple~$(a_r, b_r, c_r, d_r)$ of the~$r$th entries of the QT sequences~$(A,B,C,D)$. The following table specifies half of the mapping, with~$+1$ and~$-1$ abbreviated as~`$+$' and~`$-$', respectively.
\[
\begin{array}{ccccccccc} 
s_r & 1 & \qi & \qj & \qk & q & q\qi & q\qj & q\qk \\ 
\hline
a_r & - & + & + & + & + & + & + & + \\ 
b_r & - & - & + & - & - & + & + & - \\
c_r & - & - & - & + & - & - & + & + \\
d_r & - & + & - & - & - & + & - & + \\ 
\end{array}
\]
The remaining half of the mapping is given by the rule that if $(a_r, b_r, c_r, d_r)$ maps to $s_r$ then $(-a_r, -b_r, -c_r, -d_r)$ maps to $-s_r\mspace{0.7mu}$.
Due to this correspondence, searching for perfect quaternion sequences over the alphabet $\Q_+$ is equivalent to searching for QT sequences.

We stress the equivalence established in \Cref{thm-bad-correspondence} is \emph{not} between Williamson-type sequences and perfect $\Q_+$-sequences, but instead
between QT sequences (that are conceivably not pairwise amicable) and perfect $\Q_+$-sequences.
Schmidt explicitly warns that he does \emph{not}
require pairwise amicability of the block matrices in the result
used to prove \Cref{thm-bad-correspondence}:
\begin{quote}
We remark that this definition slightly differs from the usual one which requires that $XY^t = Y X^t$ holds for all
2-subsets $\{X, Y\}$ of $\{A, B, C, D\}$.~\cite{Schmidt1999}
\end{quote}
Thus, in order to complete an enumeration of perfect $\Q_+$-sequences via this equivalence one could unfortunately
\emph{not} use the stronger pairwise amicability condition
and would instead have to use the weaker quaternion-type conditions provided by~\eqref{eq:pcf}.
Fortunately, in the next section we show that the conditions~\eqref{eq:pcf} actually imply pairwise amicability,
thereby rendering the above distinction irrelevant and providing the following.
\begin{theorem}
There is a one-to-one correspondence between quadruples of Williamson-type sequences of length\/~$n$ and perfect sequences of length\/~$n$ over\/~$\Q_+$.
\end{theorem}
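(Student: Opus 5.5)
The statement follows from \Cref{thm-bad-correspondence} once we show that every quadruple of QT sequences is Williamson-type. The converse is immediate: if $R_{X,Y}(t)=R_{Y,X}(t)$ for all pairs, each bracketed difference in \eqref{eq-cc1}--\eqref{eq-cc3} vanishes. So the whole task is to prove that the crosscorrelation conditions \eqref{eq-cc1}--\eqref{eq-cc3} force $R_{X,Y}(t)=R_{Y,X}(t)$ for all $X,Y\in\{A,B,C,D\}$. Equivalently, in matrix language, the circulant blocks are pairwise amicable. I plan to work in the Fourier domain, where circulant matrices become diagonal and the conditions decouple frequency by frequency.

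\textbf{Step 1 (diagonalize).} Let $X$ be the circulant matrix with first row $[x_0,\dots,x_{n-1}]$, and let $\hat x_k=\sum_r x_r\omega^{rk}$ with $\omega=\e^{2\pi\complexi/n}$. Since the entries are real, all of $X$, $Y$, $X^\top$, $Y^\top$ are simultaneously diagonalized by the Fourier matrix. The eigenvalue of $X$ at frequency $k$ is $\hat x_k$ and that of $X^\top$ is $\overline{\hat x_k}$. Hence $XY^\top-YX^\top$ has eigenvalues
\[ \hat x_k\overline{\hat y_k}-\hat y_k\overline{\hat x_k}=2\complexi\,\Im(\hat x_k\overline{\hat y_k}). \]
Amicability of $X,Y$ is therefore equivalent to $u_{XY}(k)\eqdef\Im(\hat x_k\overline{\hat y_k})=0$ for every $k$. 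The same holds for the sequence version, since the DFT of $t\mapsto R_{X,Y}(t)-R_{Y,X}(t)$ is, up to conjugation conventions, this same quantity. The conditions \eqref{eq:pcf} or \eqref{eq-cc1}--\eqref{eq-cc3} then become, at each $k$,
\[ u_{AB}+u_{CD}=0,\qquad u_{AC}+u_{DB}=0,\qquad u_{AD}+u_{BC}=0, \]
with $u_{YX}=-u_{XY}$. I will fix the signs carefully against the paper's definition of $R_{X,Y}$; they may come out as $u_{AB}=u_{CD}$ and so on, and that variant has to be handled as well.

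\textbf{Step 2 (Plücker relation).} Fix $k$. Write $\hat a_k=p_1+\complexi q_1$, $\hat b_k=p_2+\complexi q_2$, $\hat c_k=p_3+\complexi q_3$, $\hat d_k=p_4+\complexi q_4$, and set $P=(p_i)$, $Q=(q_i)\in\R^4$. Then $u_{XY}=q_xp_y-p_xq_y$, so the six numbers $u_{XY}$ are, up to a global sign, the Plücker coordinates of the decomposable bivector $P\wedge Q$. They therefore satisfy the Plücker identity
\[ u_{AB}u_{CD}-u_{AC}u_{BD}+u_{AD}u_{BC}=0. \]
Substituting $u_{CD}=-u_{AB}$, $u_{BD}=u_{AC}$ (from $u_{DB}=-u_{AC}$) and $u_{BC}=-u_{AD}$ gives
\[ -(u_{AB}^2+u_{AC}^2+u_{AD}^2)=0. \]
Hence $u_{AB}=u_{AC}=u_{AD}=0$, and by the three relations of Step~1 all six $u$'s vanish. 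Conceptually, \eqref{eq-cc1}--\eqref{eq-cc3} say that $P\wedge Q$ is (anti-)self-dual, and a nonzero decomposable $2$-form in $\R^4$ is never (anti-)self-dual. This is why the argument works in either sign convention: the quaternionic sign pattern is exactly what puts the bivector in one of the two $3$-dimensional eigenspaces of the Hodge star. Notably, the autocorrelation condition \eqref{eq-ac} is not needed.

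\textbf{Step 3 (conclude).} Since every eigenvalue of the normal matrix $XY^\top-YX^\top$ is zero, the matrix itself is zero, so the blocks are pairwise amicable. Equivalently $R_{X,Y}=R_{Y,X}$, so QT sequences and Williamson-type sequences coincide, and \Cref{thm-bad-correspondence} yields the stated correspondence.

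The main obstacle I expect is the sign bookkeeping in Step~1. The three conditions have to be translated into the exact relations among the $u_{XY}$, and these must be matched against the signs in the Plücker identity, so that the substitution yields a sum of squares rather than a cancellation. I would verify this first by direct expansion for a single frequency before writing the general argument.
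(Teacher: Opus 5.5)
Your proposal is correct and follows the paper's route. You combine \Cref{thm-bad-correspondence} with the equivalence of QT and Williamson-type quadruples, prove that equivalence via the DFT, and show frequency by frequency that the relations $u_{AB}+u_{CD}=0$, $u_{AC}+u_{DB}=0$, $u_{AD}+u_{BC}=0$ force every $u_{XY}$ to vanish; your signs do match the paper's conventions.

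The only difference is the finish. You use the Pl\"ucker identity to get $u_{AB}^2+u_{AC}^2+u_{AD}^2=0$ in one line. The paper instead assumes one $2\times2$ determinant is nonzero, expands $c,d$ in the basis $\{a,b\}$, and derives $\alpha^2+\beta^2=-1$. That is the same identity in disguise, and your version avoids the ``without loss of generality'' case choice.
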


\subsection{The equivalence of QT sequences and Williamson-type sequences}\label{sec:qt-wt-equiv}

In order to show the equivalence of QT sequences and Williamson-type sequences we utilize the discrete Fourier transform (DFT)\@.
We denote by $\DFT_A(t)$ the $t$-th value of the DFT of a sequence $A$ of length~$n$,
i.e., $\DFT_A(t)=\sum_{s=0}^{n-1}a_i\exp(2\pi \qi st/n)$, and use
$\DFT(A)$ to denote the vector of DFT values, i.e., $\DFT(A)=[\DFT_A(0),\dotsc,\DFT_A(n-1)]$.
We also define the \emph{power spectral density} of $A$ by $\PSD_A(t) \coloneqq \DFT_A(t)\DFT_{A^*}(t) = \lvert\DFT_A(t)\rvert^2$
and write $\PSD_A$ for the vector of $\PSD_A(t)$ values for $t=0$ to $n-1$.
Here $A^*$ is the conjugate transpose operation applied to a sequence; the $k$th entry
of $A^*$ is $(A_{-k\bmod n})^*$ where the $\bmod$ operator gives the integer in $[0,n)$
congruent to its argument modulo~$n$.  The following equivalence is well-known
but for completeness we provide a proof.

\begin{proposition}[Power Spectral Density] \label{prop:psd}
$(A, B, C, D)$ is a quadruple of QT sequences of length $n$ if and only if
\[ \PSD_A(t) + \PSD_B(t) + \PSD_C(t) + \PSD_D(t) = 4n \quad\text{for all $t\in\mathbb{Z}$}. \]
\end{proposition}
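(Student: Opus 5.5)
The plan is to pass to the Fourier domain and use the periodic Wiener--Khinchin identity. For a real sequence $A$ of length $n$ we have
\[
\DFT_A(t)\,\overline{\DFT_A(t)}=\sum_{r,s}a_ra_s\exp\bigl(2\pi\qi(r-s)t/n\bigr)=\sum_{u=0}^{n-1}R_A(u)\exp\bigl(-2\pi\qi ut/n\bigr).
\]
This follows by grouping the pairs $(r,s)$ according to $u=s-r \bmod n$ and using that $\exp(2\pi\qi\,\cdot\,t/n)$ is $n$-periodic. So $\PSD_A$ is the DFT (up to the sign convention in the exponent) of the periodic autocorrelation vector $(R_A(0),\dotsc,R_A(n-1))$. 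The same holds for $B$, $C$, $D$. By linearity,
\[
\sum_{X\in\{A,B,C,D\}}\PSD_X(t)=\sum_{u}\Bigl(\sum_X R_X(u)\Bigr)\exp\bigl(-2\pi\qi ut/n\bigr).
\]

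\emph{Forward direction.} If $(A,B,C,D)$ is a QT quadruple, the autocorrelation condition \eqref{eq-ac} says the inner sum is $4n\,\diracdelta_{0,u}$. Hence the right-hand side equals $4n$ for every integer $t$; periodicity in $t$ covers all $t\in\mathbb{Z}$.

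\emph{Converse, autocorrelation part.} Suppose the PSD sum is identically $4n$. The vector $\bigl(\sum_X R_X(u)\bigr)_u$ then has constant DFT $4n$. By invertibility of the DFT (orthogonality of characters, $\sum_{t}\exp(2\pi\qi ut/n)=n\,\diracdelta_{0,u}$), this vector equals $4n\,\diracdelta_{0,u}$, which is exactly \eqref{eq-ac}.

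\emph{Converse, crosscorrelation part (main obstacle).} The PSD identity on its face only encodes \eqref{eq-ac}. The crosscorrelation conditions \eqref{eq-cc1}--\eqref{eq-cc3} still have to be recovered. For binary sequences $R_{B,A}(t)=R_{A,B}(-t)$. So, for example, \eqref{eq-cc1} says that $u\mapsto R_{A,B}(u)+R_{C,D}(u)$ is an even function on $\mathbb{Z}/n$. In the Fourier domain this becomes
\[
\Im\bigl(\DFT_A(t)\overline{\DFT_B(t)}+\DFT_C(t)\overline{\DFT_D(t)}\bigr)=0\quad\text{for all }t,
\]
and similarly for the other two pairings.

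My first attempt would be to extract these from the constancy of $\sum_X\lvert\DFT_X(t)\rvert^2$ together with integrality and the $\pm1$ constraint. One route is to consider the quaternion $\DFT_A+\DFT_B\qj$-type combinations that arise from the $\Q_+$ correspondence of \Cref{thm-bad-correspondence}. I am not confident this works, since the PSD condition looks like it constrains only moduli and not phases. If it fails, I would check small cases (e.g.\ $n=3,5$) by computer for a quadruple satisfying \eqref{eq-ac} but violating \eqref{eq-cc1}. In that event the proposition should be read, and proven, as the equivalence between the PSD identity and the autocorrelation condition \eqref{eq-ac}, which is the part established above. That is the step I would flag for careful checking before relying on it in \Cref{sec:qt-wt-equiv}.
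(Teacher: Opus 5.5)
Your forward direction and your proof that the PSD identity is equivalent to the autocorrelation condition~\eqref{eq-ac} are correct. They follow the same argument as the paper: $\PSD_X$ is the DFT of the periodic autocorrelation vector of $X$, so by linearity the PSD sum is the DFT of $\sum_X R_X$, and inverting the DFT gives the equivalence.

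Your suspicion about the rest of the converse is justified. As literally stated, the ``if'' direction is false, so no argument via the $\Q_+$ correspondence or integrality can recover \eqref{eq-cc1}--\eqref{eq-cc3}. Here is a counterexample for $n=3$:
\begin{itemize}
\item Take $A=[1,1,1]$, $B=D=[1,1,-1]$, and $C=[1,-1,1]$.
\item Then $R_A(t)=3$ and $R_B(t)=R_C(t)=R_D(t)=-1$ for $t=1,2$, so \eqref{eq-ac} holds. Equivalently, $\PSD_A+\PSD_B+\PSD_C+\PSD_D=[9,0,0]+3\cdot[1,4,4]=[12,12,12]$.
\item Since $A$ is all ones, $R_{A,D}(t)=R_{D,A}(t)=r_D$. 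But $R_{B,C}(1)=-1$ and $R_{C,B}(1)=3$, so the left side of~\eqref{eq-cc3} at $t=1$ equals $-4$.
\end{itemize}

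The paper's own proof has exactly the scope you ended up with. It shows that the PSD identity is equivalent to \eqref{eq-ac} and never addresses the crosscorrelation conditions. So the proposition should be read as you suggest: in the converse, replace ``quadruple of QT sequences'' by ``quadruple of binary sequences satisfying~\eqref{eq-ac}''. The crosscorrelation conditions are a separate requirement, expressed in the frequency domain by \Cref{prop:cpsd} and \Cref{rmk:cpsd}.

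Nothing later in the paper depends on the false direction, including the section you were worried about:
\begin{itemize}
\item \Cref{cor:PSD} uses only the forward implication.
\item The search checks the crosscorrelation conditions explicitly on every match.
\item The proof of \Cref{thm:qt-wt-equiv} relies on \Cref{prop:cpsd}, not on this proposition.
\end{itemize}
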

\begin{proof}
The autocorrelation condition is equivalent to the convolution equation
\begin{equation}
A*A^* + B*B^* + C*C^* + D*D^* = [4n,0,\dotsc,0] \label{eq:conv}
\end{equation}
where $X*Y$ denotes sequence convolution, i.e.,
the convolution of the sequences $[x_0,\dotsc,x_{n-1}]$ and $[y_0,\dotsc,y_{n-1}]$
has as its $k$th entry $\sum_{i=0}^{n-1}x_iy_{k-i\bmod n}$.
From the DFT convolution theorem we have $\DFT_{X*X^*}(t)=\DFT_X(t)\DFT_{X^*}(t)=\PSD_X(t)$.
Applying the DFT to both sides of \eqref{eq:conv}, using the linearity of
the DFT, and the fact that the DFT of $[4n,0,\dotsc,0]$ is $[4n,\dotsc,4n]$, we have that
\eqref{eq:conv} implies
\[ \PSD_A + \PSD_B + \PSD_C + \PSD_D = [4n,\dotsc,4n] . \]
Applying the inverse DFT to this PSD equation shows it is equivalent
to \eqref{eq:conv}.
\end{proof}

Similarly, the following proposition shows that we can also apply the DFT the crosscorrelation
equations to derive an equivalent form of the equations in the frequency domain.
Define the \emph{cross power spectral density} by $\CPSD_{X,Y}(t)=\DFT_X(t)\DFT_Y(t)^*$ and
let $\CPSD_{X,Y}$ be the vector of CPSD values from $t=0$ to $n-1$.

\begin{proposition}[Cross Power Spectral Density]\label{prop:cpsd}
The crosscorrelation equation~\eqref{eq-cc1} holding for all integers\/~$t$ is equivalent to
the condition
\[ \CPSD_{A,B}(t) - \CPSD_{B,A}(t) + \CPSD_{C,D}(t) - \CPSD_{D,C}(t) = 0 \quad\text{for all $t\in\mathbb{Z}$}. \]
\end{proposition}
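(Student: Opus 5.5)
The plan mirrors the proof of \Cref{prop:psd}: rewrite the crosscorrelations as convolutions, apply the DFT convolution theorem termwise, and use invertibility of the DFT to go back. Since $A,B,C,D$ are binary, hence real, all sequences are complex-valued and commutativity of multiplication is available.

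First, express each crosscorrelation as a convolution. Let $Y^*$ be the conjugate-reversed sequence with $k$th entry $(y_{-k \bmod n})^*$, as defined before \Cref{prop:psd}. The $k$th entry of $X * Y^*$ is
\[ \sum_i x_i y^*_{i-k \bmod n} = R_{X,Y}(-k \bmod n). \]
So $R_{X,Y}(t)$ is the $(-t \bmod n)$th entry of $X*Y^*$. Because $t\mapsto -t \bmod n$ is a bijection of $\mathbb{Z}_n$ and both sides are $n$-periodic, condition~\eqref{eq-cc1} holding for all integers $t$ is equivalent to the single vector identity
\[ A*B^* - B*A^* + C*D^* - D*C^* = [0,\dotsc,0]. \]

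Next, apply the DFT. By linearity and the convolution theorem, $\DFT_{X*Y^*}(t)=\DFT_X(t)\DFT_{Y^*}(t)$. We need $\DFT_{Y^*}(t) = \DFT_Y(t)^*$. This follows by reindexing $s\mapsto -s$:
\[ \sum_s y^*_{-s}\, e^{2\pi \qi st/n} = \sum_s y^*_s\, e^{-2\pi \qi st/n} = \Bigl(\sum_s y_s\, e^{2\pi \qi st/n}\Bigr)^*. \]
This is the same identity implicitly used for $\PSD$, and it relies on $n$-periodicity of the exponential. Hence $\DFT_{X*Y^*}(t) = \CPSD_{X,Y}(t)$, and the DFT of the vector identity above is exactly the stated $\CPSD$ equation for $0\le t<n$. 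Both sides are $n$-periodic in $t$, so it holds for all $t\in\mathbb{Z}$.

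Finally, the DFT is an invertible linear map on $\mathbb{C}^n$. So the $\CPSD$ equation holding for all $t$ implies the convolution identity, and reading off entries gives~\eqref{eq-cc1} for every $t$.

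There is no real obstacle. The only step needing care is bookkeeping of signs and indices: the sign convention linking $R_{X,Y}(t)$ to the $(-t)$th entry, and which factor gets conjugated in $\CPSD_{X,Y}$. Both are harmless, since we quantify over all $t$ and each conjugation is matched consistently across the four terms.
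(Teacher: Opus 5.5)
Your proposal is correct and follows the paper's proof essentially verbatim: rewrite~\eqref{eq-cc1} as the convolution identity $A*B^*-B*A^*+C*D^*-D*C^*=[0,\dotsc,0]$, apply the DFT convolution theorem together with $\DFT_{Y^*}(t)=\DFT_Y(t)^*$, and use invertibility of the DFT. You track the index reversal $(X*Y^*)_k=R_{X,Y}(-k\bmod n)$ explicitly, which is more precise than the paper's statement that the crosscorrelation vector \emph{equals} $X*Y^*$, but this changes nothing because the condition is quantified over all $t$.
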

\begin{proof}
The vector of crosscorrelations of $X$ and $Y$ is equal to the convolution vector
$X*Y^*$, so the crosscorrelation condition~\eqref{eq-cc1} is equivalent to
\begin{equation}
 A*B^*-B*A^*+C*D^*-D*C^* = [0,\dotsc,0] . \label{eq:crossconv}
\end{equation}
By applying the DFT and using the DFT convolution theorem we have
$\DFT_{X*Y^*}(t)=\DFT_X(t)\DFT_{Y^*}(t)=\CPSD_{X,Y}(t)$, so
by applying the DFT to~\eqref{eq-cc1} and noting that the DFT
of $[0,\dotsc,0]$ is $[0,\dotsc,0]$, we have that~\eqref{eq:crossconv} is equivalent
to
\[ \CPSD_{A,B} - \CPSD_{B,A} + \CPSD_{C,D} - \CPSD_{D,C} = [0,\dotsc,0] . \qedhere \]
\end{proof}
\begin{remark}\label{rmk:cpsd}
A similar proof to that of \Cref{prop:cpsd} gives that~\eqref{eq-cc2} for all\/~$t$ is equivalent to
$\CPSD_{A,C} - \CPSD_{C,A} =  \CPSD_{B,D} - \CPSD_{D,B}$,
and~\eqref{eq-cc3} for all\/~$t$ is equivalent to
$\CPSD_{A,D} - \CPSD_{D,A} =  \CPSD_{C,B} - \CPSD_{B,C}$.
\end{remark}

Even though QT sequences and Williamson-type sequences are defined
differently, we now prove these two notions are actually equivalent.

\begin{theorem}
$(A,B,C,D)$ is a quadruple of QT sequences if and only if $(A,B,C,D)$ is a quadruple of
Williamson-type sequences.
\label{thm:qt-wt-equiv}
\end{theorem}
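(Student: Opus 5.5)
The converse direction is immediate: if $R_{X,Y}(t)=R_{Y,X}(t)$ for all $X,Y\in\{A,B,C,D\}$, each bracketed difference in \eqref{eq-cc1}--\eqref{eq-cc3} vanishes termwise. For the forward direction I will work entirely in the frequency domain, using \Cref{prop:psd}, \Cref{prop:cpsd} and \Cref{rmk:cpsd}. Fix $t$ and write $a=\DFT_A(t)$, $b=\DFT_B(t)$, $c=\DFT_C(t)$, $d=\DFT_D(t)$; these are complex numbers. Note that $\CPSD_{X,Y}(t)-\CPSD_{Y,X}(t)=x\bar y-y\bar x=2\complexi\,\Im(x\bar y)$. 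Running the argument of \Cref{prop:cpsd} for a single pair $X,Y$ shows that Williamson-type is equivalent to $\Im(x\bar y)=0$ for every pair and every $t$. So the goal at each frequency $t$ is to prove that $a,b,c,d$ all lie on one real line through the origin in $\complex$.

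The hypotheses at frequency $t$ are
\[
|a|^2+|b|^2+|c|^2+|d|^2=4n,\qquad \Im(a\bar b)+\Im(c\bar d)=0,
\]
\[
\Im(a\bar c)=\Im(b\bar d),\qquad \Im(a\bar d)=\Im(c\bar b).
\]
To exploit them I split into real and imaginary parts, $a=u_0+\complexi v_0$, $b=u_1+\complexi v_1$, and so on. I then form the two real quaternions
\[
u=u_0+u_1\qi+u_2\qj+u_3\qk,\qquad v=v_0+v_1\qi+v_2\qj+v_3\qk.
\]
Here the quaternion units $\qi,\qj,\qk$ are formal and are kept separate from the complex scalar $\complexi$ of the DFT. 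Expanding $\Im(x\bar y)=u_xv_y-v_xu_y$, the three crosscorrelation conditions should read, up to sign conventions, as the vanishing of the $\qi$, $\qj$ and $\qk$ components of the quaternion $u v^*$ (or $v^* u$; which one depends on conventions I will fix during the computation). Equivalently, they say the $4\times4$ real matrix $M_vM_u^\top$ is symmetric, where $M_x$ denotes the quaternion-type real matrix built from $x$. This matches the block structure of the Williamson array, since real circulant blocks are simultaneously diagonalized and $X^\top$ has eigenvalue $\bar x$.

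With this reformulation, the quaternion $uv^*$ has zero pure imaginary part, i.e.\ $uv^*=\lambda\in\reals$. If $v=0$ every DFT value is real and we are done. Otherwise $u=\lambda v/\norm{v}^2$, so $u=\mu v$ for a real $\mu$. Then componentwise $a=(\mu+\complexi)v_0$, $b=(\mu+\complexi)v_1$, $c=(\mu+\complexi)v_2$, $d=(\mu+\complexi)v_3$. All four values are real multiples of the common complex number $\mu+\complexi$, hence $\Im(x\bar y)=0$ for every pair. Since $t$ was arbitrary, inverting the DFT as in \Cref{prop:cpsd} gives $R_{X,Y}=R_{Y,X}$ for all pairs. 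Note that the autocorrelation condition is not needed for this step.

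The main obstacle is the bookkeeping in the middle step. I must check that the three specific signed combinations in \eqref{eq-cc1}--\eqref{eq-cc3}, transported through \Cref{rmk:cpsd}, are exactly the three imaginary components of a single quaternion product $uv^*$ (or $v^*u$), with no stray sign that would instead produce an unrelated bilinear form. I would verify this by directly expanding the $\qi$, $\qj$, $\qk$ coefficients of $uv^*$, and if a sign mismatch appears, try $v^*u$ or $u^*v$ instead. One of these should match the Williamson array's sign pattern, because that array is precisely the left-regular (or right-regular) representation of a quaternion.
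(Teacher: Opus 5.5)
Your proposal is correct, and it takes a genuinely different route from the paper. The step you leave as bookkeeping does work out, and with your first choice $uv^*$. Write $\detii{x}{y}\eqdef u_xv_y-u_yv_x$ for $x=u_x+\complexi v_x$, as in the paper. Expanding directly gives
\[
uv^*=(u\cdot v)-\bigl(\detii{a}{b}+\detii{c}{d}\bigr)\qi-\bigl(\detii{a}{c}+\detii{d}{b}\bigr)\qj-\bigl(\detii{a}{d}+\detii{b}{c}\bigr)\qk ,
\]
where $u\cdot v$ is the Euclidean inner product in $\reals^4$. The vanishing of the three imaginary coefficients is exactly the system \eqref{eq:dets} in the paper's proof. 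The choice of product does matter: with $v^*u$ or $u^*v$ the cross-product terms flip sign, giving $\detii{a}{b}-\detii{c}{d}$ and so on, which is not the QT pattern. Also, $\Im(x\bar y)=v_xu_y-u_xv_y$, the negative of what you wrote; this is harmless because every condition is homogeneous.

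The paper stays in $\reals^2$. It assumes some determinant is nonzero (w.l.o.g.\ $\detii{a}{b}$, using a symmetry of the system), expands $c,d$ in the basis $\{a,b\}$, and reaches the contradiction $\alpha^2+\beta^2=-1$. That argument is fully elementary, using only $2\times2$ determinants, but it needs the w.l.o.g.\ step and a proof by contradiction. Your version replaces the case analysis with a structural observation: the sign pattern of \eqref{eq-cc1}--\eqref{eq-cc3} is the multiplication table of $\HH$. The theorem then reduces to $\HH$ having no zero divisors, since $uv^*\in\reals$ with $v\neq0$ forces $u\in\reals v$. It also explains quantitatively why the three combined conditions force all six pairwise ones. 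Multiplicativity of the norm, together with Lagrange's identity, gives at each frequency
\[
\bigl(\detii{a}{b}+\detii{c}{d}\bigr)^2+\bigl(\detii{a}{c}+\detii{d}{b}\bigr)^2+\bigl(\detii{a}{d}+\detii{b}{c}\bigr)^2=\norm{u}^2\norm{v}^2-(u\cdot v)^2=\sum_{x<y}\detii{x}{y}^2 ,
\]
summed over the six pairs from $\{a,b,c,d\}$. So the QT crosscorrelation defects and the amicability defects have the same total squared size, frequency by frequency.
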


\begin{proof}
If $(A,B,C,D)$ is a set of Williamson-type sequences then $R_{X,Y}(t)-R_{Y,X}(t)=0$
for all integers~$t$ and $X,Y\in\{A,B,C,D\}$.  Adding together the conditions
for $(X,Y)=(A,B)$ and $(X,Y)=(C,D)$
one derives~\eqref{eq-cc1}.  The conditions~\eqref{eq-cc2} and~\eqref{eq-cc3}
are derived similarly.

Conversely, suppose $(A,B,C,D)$ is a set of QT sequences. Take $t$ to be a fixed integer,
and let $a\coloneqq\DFT_A(t)$, $b\coloneqq\DFT_B(t)$, $c\coloneqq\DFT_C(t)$, and
$d\coloneqq\DFT_D(t)$.  By \Cref{prop:cpsd} we have
\begin{equation} ab^* - ba^* + cd^* - dc^* = 0. \label{eq:negdet} \end{equation}
For $x\in\C$, write $x=r_x+i_x\qi$ with $r_x,i_x\in\R$.
Equivalently considering $x$ as a row vector in $\R^2$,
we write the determinant of the matrix $\bigl[\begin{smallmatrix}x\\y\end{smallmatrix}\bigr]=\bigl[\begin{smallmatrix}r_x&i_x\\r_y&i_y\end{smallmatrix}\bigr]$
as $\detii{x}{y}=r_xi_y-r_yi_x$, and note that $yx^*-xy^*=2\qi\detii{x}{y}$.
Thus \eqref{eq:negdet} is equivalent to $\detii{a}{b}+\detii{c}{d} = 0$ or
$\detii{a}{b}=\detii{d}{c}$.  From the crosscorrelation equations in the
frequency domain (cf.~\Cref{rmk:cpsd}) we have
\begin{equation} \detii{a}{b}=\detii{d}{c}, \quad \detii{a}{c}=\detii{b}{d}, \quad\text{and}\quad \detii{a}{d}=\detii{c}{b} . \label{eq:dets} \end{equation}
We now show this implies all these determinants are $0$.  Suppose not, and without loss
of generality that $\detii{a}{b}\neq0$.  Then $\{a,b\}$ form a basis of $\R^2$, so write
$c=\alpha a + \beta b$ and $d=\gamma a + \delta b$ for $\alpha$, $\beta$, $\gamma$, $\delta\in\R$.
Now
$\detii{d}{c}=\detii{\gamma a+\delta b}{\alpha a+\beta b}=\detii{\gamma a}{\alpha a+\beta b}+\detii{\delta b}{\alpha a+\beta b}=\detii{\gamma a}{\beta b}+\detii{\delta b}{\alpha a}=(\gamma\beta-\alpha\delta)\detii{a}{b}$,
$\detii{a}{c}=\beta\detii{a}{b}$, $\detii{b}{d}=\detii{b}{\gamma a + \delta b}=\detii{b}{\gamma a}=-\gamma\detii{a}{b}$,
$\detii{a}{d}=\delta\detii{a}{b}$, and $\detii{c}{b}=\alpha\detii{a}{b}$.  Using these expressions in~\eqref{eq:dets}, we have
\[ \gamma\beta-\alpha\delta = 1 , \quad \beta = - \gamma, \quad\text{and}\quad \delta = \alpha . \]
These imply $\alpha^2+\beta^2=-1$, a contradiction.  Thus it must be the case
that $\detii{a}{b}=\detii{a}{c}=\detii{a}{d}=\detii{b}{c}=\detii{b}{d}=\detii{c}{d}=0$.
From $\detii{a}{b}=0$ it follows $ab^*=ba^*$.  Since $a=\DFT_A(t)$ and $b=\DFT_B(t)$
and $t$ was arbitrary, it follows $\CPSD_{A,B}=\CPSD_{B,A}$.  Applying the inverse DFT, we have
$R_{A,B}(t)=R_{B,A}(t)$ for all~$t$.  Similarly, we have $R_{X,Y}(t)=R_{Y,X}(t)$ for all~$t$,
so $(A,B,C,D)$ is of Williamson-type.
\end{proof}

Despite the one-to-one correspondence between Williamson-type sequences and QT sequences, we still find it useful
to have separate terminology for both.  In the next section we
provide two different notions of equivalence of quadruples of QT sequences (one based 
on the definition of QT sequences and the other based on the definition of Williamson-type sequences)
and describe the usefulness of having both kinds of equivalence available.

\subsection{Sequence equivalence operations}\label{sec:equiv}

QT sequences have a number of equivalence operations that can be used to define equivalence classes 
that in general are not the same as the equivalence classes formed by the Hadamard equivalence described in \Cref{sec:matrices}.
Because we care about running exhaustive searches
for QT sequences up to equivalence, we use these equivalence operations to
effectively reduce the size of the search space without loss of generality.
Each operation acts on a quadruple of QT sequences $(A,B,C,D)$
and produces another quadruple of QT sequences.

\paragraph{QT equivalence operations}
\begin{itemize}
    \item NS: (Negate and Swap) Negate a single sequence of $A$, $B$, $C$, $D$ and swap a single pair of sequences of $A$, $B$, $C$, $D$.
    \item AN: (Alternating negation) Multiply every other element of $A$, $B$, $C$ and $D$ simultaneously by~$-1$ when the length~$n$ is even.
    \item DE: (Decimate) Apply an automorphism of the cyclic group $C_n$ to the indices of each sequence $A$, $B$, $C$, and $D$ simultaneously.
    \item CS: (Cyclic Shift) Cyclically shift all the entries of $A$, $B$, $C$, and $D$ simultaneously by any amount.
    \item DH: (Double Half Shift) Shift the entries of two of $A$, $B$, $C$, and $D$ by $n/2$ when the length~$n$ is even.
\end{itemize}
We call two quadruples of QT sequences \emph{QT equivalent} if they are equivalent
under these operations.  For convenience, we also define the following two additional equivalence operations. They both can be derived from
two applications of the NS operation.

\begin{itemize}
    \item DN: (Double Negate) Negate two sequences of $A$, $B$, $C$, or $D$.
    \item DS: (Double Swap) Swap two pairs of $A$, $B$, $C$, $D$.
\end{itemize}

The NS equivalence operation preserves the autocorrelation condition
since $R_X(t)=R_{-X}(t)$ for all~$t$, and it preserves the crosscorrelation conditions because
the action of applying a single negation and the action of a applying single swap
has the effect of negating a single side of each of
the three crosscorrelation conditions (as well as permuting two conditions
amongst themselves in the case of a swap) when written in the form
\begin{align*}
    R_{A,B}(t)-R_{B,A}(t)&\eq R_{D,C}(t)-R_{C,D}(t), \\
    R_{A,C}(t)-R_{C,A}(t)&\eq R_{B,D}(t)-R_{D,B}(t), \\
    R_{A,D}(t)-R_{D,A}(t)&\eq R_{C,B}(t)-R_{B,C}(t).
\end{align*}
Thus, the NS operation preserves the crosscorrelation conditions.
The DE, AN, and both shift operations are well-known to preserve
the sum of autocorrelation values~\cite{Lumsden2025}
and they also preserve the sum of crosscorrelation values.
For example, if $(-1)*X$ denotes the alternating negation of $X$,
then $R_{(-1)*A,(-1)*B}(t)=(-1)^t R_{A,B}(t)$ (for even length sequences)
and if $s(X)$ denotes a cyclic shift
of $X$ by 1 then $R_{s(A),s(B)}(t)=R_{A,B}(t)$ and
\[ R_{s^{n/2}(A),B}(t) = R_{A,s^{n/2}(B)}(t) = R_{A,B}(t\pm n/2) \]
when $n$ is even.
Sequence reversal can also be considered an equivalence operation, but in this case it
can be recovered by applying DE with the
automorphism $x\mapsto -x$ followed by a cyclic shift to the left.

Note that the above equivalence operations do \emph{not} make use of
the one-to-one correspondence between QT sequences and Williamson-type sequences
provided by \Cref{thm:qt-wt-equiv}.  Incorporating \Cref{thm:qt-wt-equiv},
we could replace the negate-and-swap operation
with a single-negate operation and a single-swap operation,
and the double half shift with a single half shift:
\begin{itemize}
    \item SN: (Single Negate) Negate one sequence of $A$, $B$, $C$, or $D$.
    \item SS: (Single Swap) Swap one pair of $A$, $B$, $C$, $D$.
    \item SH: (Single Half Shift) Shift the entries of one of $A$, $B$, $C$, $D$ by $n/2$ when~$n$ is even.
\end{itemize}
This is because the SN, SS, and SH operations preserve the pairwise amicability
condition $R_{X,Y}(t)=R_{Y,X}(t)$ satisfied by Williamson-type sequences.
Consequently, we call two quadruples \emph{Williamson-type equivalent}
if they are equivalent under the
QT equivalence operations with the NS operation replaced by SN and SS
and the DH operation replaced by SH\@.

The primary reason why we define QT equivalence
in a different way from Williamson-type equivalence is because QT equivalence
relates better to Hadamard equivalence---in \Cref{thm:equivalence}
we show that QT equivalence implies Hadamard equivalence, so to enumerate
all QT sequences up to Hadamard equivalence one can enumerate all sequences
up to QT equivalence and then filter by Hadamard equivalence.  On the other
hand, Williamson-type equivalence does \emph{not} imply Hadamard equivalence
in general.
Thus, an enumeration of QT sequences
up to Williamson-type equivalence is not guaranteed to produce an enumeration
of QT sequences up to Hadamard equivalence.
For simplicity, the algorithm we present will enumerate QT sequences up to
Williamson-type equivalence, but
in Section~\ref{sec:equiv_filtering}
we describe how to also ensure a complete
enumeration up to both QT equivalence and Hadamard equivalence.

\begin{theorem}\label{thm:equivalence}
QT sequences that are QT equivalent are also Hadamard equivalent.
\end{theorem}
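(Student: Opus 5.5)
Since QT equivalence is generated by the five operations NS, AN, DE, CS and DH, and Hadamard equivalence of real matrices is transitive, it suffices to check each generator separately. For each one we exhibit signed permutation matrices $P$, $Q$ (products of permutation matrices and $\pm1$ diagonal matrices) with $H' = PHQ$. Here $H$ is the quaternion-type matrix built from $(A,B,C,D)$, and $H'$ is the matrix built from the transformed quadruple. Throughout, write $H$ as $\sum_X M_X\otimes X$, where $M_A=\id_4$ and $M_B,M_C,M_D$ are the $4\times4$ signed permutation matrices realising the left-regular representation of $\qi,\qj,\qk$ that appear in Williamson's array. This Kronecker form makes the block-level and within-block operations separate.

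\textbf{Within-block operations (CS, DE, AN).} Let $S$ be the cyclic shift matrix, with $X$ circulant. Then CS replaces every block $X$ by $S^kXS^{-k}$, and conjugating $H$ by $\id_4\otimes S^k$ does exactly this. For DE with multiplier $m$ coprime to $n$, let $P_m$ be the permutation $i\mapsto mi$ of indices. The decimated circulant equals $P_mXP_m^{\top}$, so conjugate by $\id_4\otimes P_m$. For AN, let $E=\diag((-1)^i)$. Then $EXE$ is the circulant of the alternately negated sequence, using $(-1)^{i}(-1)^{j}=(-1)^{j-i}$ and $n$ even. So conjugate by $\id_4\otimes E$. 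Each conjugating matrix is a signed permutation, so each of these is a Hadamard equivalence.

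\textbf{Block-level operations (NS, DH).} These are the main obstacle, because they must respect the quaternion pattern. For NS we use the quaternion interpretation. A quadruple corresponds to $a+b\qi+c\qj+d\qk$. Negating one coordinate and swapping one pair, possibly different coordinates, is an odd-looking operation. The plan is to realise each such NS as the composition of the following:
\begin{enumerate}
\item left or right multiplication by a unit in $\Q_8$, which gives maps such as $(a,b,c,d)\mapsto(-b,a,-d,c)$;
\item quaternion conjugation, which corresponds to transposing the array pattern and negating three blocks;
\item automorphisms permuting $\qi,\qj,\qk$.
\end{enumerate}
At the matrix level, left multiplication by $u$ corresponds to $(M_u\otimes\id_n)H$. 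Right multiplication corresponds to $H(N_u\otimes\id_n)$, where $N_u$ is the right-regular representation, since the left and right regular representations commute. An automorphism such as cycling $\qi\to\qj\to\qk$ is a block-row and block-column permutation conjugation $PHP^{\top}$. A transposition-type automorphism such as $\qi\leftrightarrow\qj$, $\qk\mapsto-\qk$ combines a block permutation with a sign.

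The key computation is to verify that these generate all $4\cdot 6=24$ (negate, swap) combinations, up to operations already available (DN and DS follow as compositions). For this I would first try to show that the group generated by these moves acts on $\{\pm A,\pm B,\pm C,\pm D\}$ as the full group of signed permutations preserving the crosscorrelation conditions. I would then check a single representative NS, say negate $A$ and swap $C,D$, explicitly in block form. Everything else follows by conjugating with the cyclic automorphism and the $\Q_8$ multiplications.

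\textbf{DH.} Shifting two blocks, say $X$ and $Y$, by $n/2$ replaces each by $S^{n/2}X$ and $S^{n/2}Y$. Since $S^{n/2}$ commutes with circulants and $(S^{n/2})^2=\id$, I would multiply $H$ on the left by a block-diagonal matrix with some blocks equal to $S^{n/2}$. I would also multiply on the right by a similar block-diagonal matrix chosen so that, in each block position, the total shift applied is $n/2$ exactly when the block entry is $\pm X$ or $\pm Y$. Concretely, for $\{X,Y\}=\{A,B\}$ take $\diag(\id,\id,S^{n/2},S^{n/2})$ on the left and on the right. Then positions $(i,j)$ get shift $n/2\cdot(\epsilon_i+\epsilon_j)$, and the pattern of where $A,B$ sit in the Williamson array matches. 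The other pairs are handled the same way, or reduced to this case via the NS block symmetries above.
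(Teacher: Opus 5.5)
\textbf{Generator (2) is not a Hadamard equivalence.} Your AN and DE steps are correct. Realising NS through quaternion multiplications and automorphisms is a good self-contained substitute for the paper's citation of Barrera Acevedo, Cath\'{a}in and Dietrich. But quaternion conjugation fails, and this breaks the NS argument as you set it up.

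Conjugation sends $(A,B,C,D)$ to $(A,-B,-C,-D)$. After negating all of $H$, this is a single negation SN. The array it produces is the block transpose of $H$ (blocks relocated, not transposed). By the DE move $i\mapsto -i$ this is equivalent to $H^\top$, and transposition is not an allowed operation. It does fail in practice. For odd $n$, if SN were always a Hadamard equivalence, then so would be SS (an NS composed with an SN). Williamson-type equivalence would then imply Hadamard equivalence, contradicting Table~\ref{table:results} ($W_{\text{equ}}=1<2=H_{\text{equ}}$ for $n=11$). So with (2) included, your group is all $384$ signed permutations, and the claim that each is realised by signed permutation matrices is false.

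The fix is to drop (2). Row $i$ of Williamson's array lists the coordinates of $e_ix$ (with $e_0=1$, $e_1=\qi$, $e_2=\qj$, $e_3=\qk$). Hence $x\mapsto H(x)$ is multiplicative, and $H(uxv)=H(u)H(x)H(v)$ for $u,v\in\Q_8$; no separate right-regular representation is needed. Likewise $H(\phi(x))=P_\phi^{-1}H(x)P_\phi$ for a unit-permuting automorphism $\phi$. The maps $x\mapsto u\,\phi(x)\,v$ form a group of order $32\cdot24/4=192$. This is exactly the index-two subgroup of signed permutations whose sign product times permutation sign is $+1$, which is the group generated by NS. For example, ``negate $A$, swap $C,D$'' is $x\mapsto-\qi\,\phi(x)\,\qi^{-1}$ with $\phi\colon\qi\mapsto-\qi$, $\qj\leftrightarrow\qk$.

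\textbf{CS and DH also fail as written.} For circulant $X$ we have $S^kXS^{-k}=X$, so conjugating by $\id_4\otimes S^k$ does nothing. CS is the one-sided product $H(\id_4\otimes S^k)$, since $XS^k$ is the circulant of the shifted sequence. This is what the paper does by permuting columns only.

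In DH, putting $\diag(\id,\id,S^{n/2},S^{n/2})$ on both sides shifts the positions with $\epsilon_i\neq\epsilon_j$. These carry $\pm C,\pm D$, not $\pm A,\pm B$. This is easily repaired. The three symmetric patterns $(0,0,1,1)$, $(0,1,0,1)$, $(0,1,1,0)$ give the pairs $\{C,D\}$, $\{B,D\}$, $\{B,C\}$. The complementary pairs follow by composing with CS by $n/2$. Alternatively, use different patterns on the two sides, as the paper does: shift the columns of the first two block columns and the rows of the last two block rows.
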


\begin{proof}
We need to show that each equivalence operation
NS, AN, DE, CS, and DH preserves Hadamard equivalence.
Barrera Acevedo, Cath\'{a}in, and Dietrich have shown that if two QT sequences are
equivalent under NS equivalence then they are equivalent under Hadamard equivalence~\cite[Prop.~4.4(a,b)]{BarreraAcevedo2019},
so we have four remaining operations to consider.
In each case we show that application of the operation
to a quadruple of QT sequences produces a new quadruple which
is Hadamard equivalent to the original.  This is accomplished
by showing that each equivalence operation (considered as an operation
on quaternion-type Hadamard matrices) can be constructed out of the
operations of negating or permuting rows or columns.

For the AN operation, suppose $n$ is even.  Consider the act of
negating every second row and every second column of a quaternion-type
Hadamard matrix with circulant blocks.  Because $n$ is even, the resulting
Hadamard matrix still has circulant blocks and the defining sequences of each
circulant block have had AN applied to them.  Thus, applying AN to a QT sequence quadruple
produces a Hadamard equivalent quadruple.

The DE operation applies the same permutation $\phi$ over $\{0,\dotsc,n-1\}$ to
the indices of all entries of a quadruple.
Consider the act of using $\phi$ to permute both the columns and rows
with indices in $[0,n)$, $[n,2n)$, $[2n,3n)$, and $[3n,4n)$ of a quaternion-type
Hadamard matrix with circulant blocks.  (Here $\phi$ must be extended
to be over $\{0,\dotsc,4n-1\}$, e.g., via $\phi(i)=\phi(i-n)+n$ for $n\leq i<4n$.)
Because the same permutation was applied to both the rows and columns of each
circulant block, the resulting Hadamard matrix still has circulant blocks
(but their defining sequences have been permuted by $\phi$ and thus have had
DE applied to them).  Thus, the DE operation applied to a QT sequence quadruple
produces a Hadamard equivalent quadruple.

The CS operation with a shift of $c$ applies $s^c(i)=(i+c)\bmod n$ to
the indices of all entries of a quadruple.
Consider the act of using $s^c$ to circularly shift the columns with indices
in $[0,n)$, $[n,2n)$, $[2n,3n)$, and $[3n,4n)$ of a quaternion-type
Hadamard matrix with circulant blocks.
Because a circular shift applied to the columns of a circulant matrix produces
another circulant matrix, the resulting Hadamard matrix still has circulant
blocks and those blocks have had the CS operation applied to their defining
sequences.  Thus, the CS operation applied to a QT sequence quadruple
produces a Hadamard equivalent quadruple.

The DH operation applies a half-shift of $s^{n/2}$ to two sequences in a
quadruple of QT sequences (without loss of generality, suppose DH is applied
to the first two sequences in the quadruples).
If $X$ is a circulant matrix, denote by $X'$ the
columns of $X$ circularly shifted by an offset of $n/2$.  If $A$, $B$, $C$, and $D$ denote
the circulant blocks of a quaternion-type Hadamard matrix,
after applying $s^{n/2}$ to the columns with indices
in $[0,n)$ and $[n,2n)$, the resulting Hadamard matrix has
the form
\[ \begin{bmatrix}
A' & B' & C & D \\
-B' & A' & -D & C \\
-C' & D' & A & -B \\
-D' & -C' & B & A
\end{bmatrix} . \]
Now, applying $s^{n/2}$ to the rows with indices in $[2n,3n)$ and $[3n,4n)$ we obtain
\[ \begin{bmatrix}
A' & B' & C & D \\
-B' & A' & -D & C \\
-C & D & A' & -B' \\
-D & -C & B' & A'
\end{bmatrix} \]
which is the Hadamard matrix generated by applying DH to the QT quadruple corresponding
to $(A,B,C,D)$.
\end{proof}

\section{Exhaustive searches for QT sequences}\label{sec:search}

In this section, we first describe previous work that has been done on exhaustive search for perfect quaternion sequences, QT sequences, and related problems (see \Cref{sec:prev_work}).
We then present an algorithm that we used to extend the search for QT sequences to longer lengths in \Cref{sec:algorithm},
and explain the implementation and optimization of the algorithm in \Cref{sec:implementation}.
After the enumeration is complete, we perform a postprocessing of the results by removing QT sequences
that are equivalent up to QT, Williamson-type, and Hadamard equivalence (see \Cref{sec:equiv_filtering}).
Finally, we summarize the results found by our enumeration in \Cref{sec:results}.

\subsection{Previous work}\label{sec:prev_work}

In 2010, Kuznetsov~\cite{Kuznetsov2010} studied perfect sequences over~$\Q_{24}$
and ran searches for perfect $\Q_{24}$-sequences equivalent to sequences the form
\[ [z,1,\qj,x_0,x_1\dotsc,x_t,x_t,\dotsc,x_1,x_0,\qj,1] \]
where $z\in\Q_{24}$ and $x_0$, $\dotsc$, $x_t\in\Q_8$.  Note such sequences are symmetric
and this assumption allowed Kuznetsov's search to reach length 23 using a desktop computer for an unspecified
amount of time.
He discovered sequences of lengths 5, 7, 9, 11, 13, 17, 19, and~23,
and used these sequences to create a perfect $\Q_{24}$-sequence of length over~$5$ billion.
In~2012, Barrera Acevedo and Hall~\cite{AcevedoHall2012} constructed perfect sequences over the alphabet $\{\pm1, \pm\qi, \qj\}$
for all lengths of the form $q + 1$ where $q \equiv 1 \pmod 4$, and~$q$ is a prime power.
In~2017, Blake \cite{BT17} ran extensive searches for perfect sequences over~$\Q_8$.

In~2019, Barrera Acevedo, Cath{\'a}in, and Dietrich~\cite{BarreraAcevedo2019}
ran an exhaustive search to find cocyclic Hadamard matrices of prime order~$p \leq 13$.
One part of their results include QT sequences directly, though their search also
includes quadruples of other types that differ from QT sequences.
Their algorithm enumerates the possibilities for three out of the four sequences, and then uses
the first three sequences to generate all possibilities for the fourth sequence.

Also in~2019, Bright, Kotsireas, and Ganesh ran a search to find Williamson sequences.
Their algorithm separates the four sequences into two pairs, enumerates all the possibilities for these pairs,
and then combines the results from the enumeration of the pairs to generate all quadruples of Williamson sequences.
We adapt this algorithm to work for QT sequences
and show that this approach is more efficient than an approach that starts by enumerating
the possibilities for three out of the four sequences.

\subsection{An enumeration algorithm for QT sequences}\label{sec:algorithm}

We would like to search for a set of four binary sequences satisfying the autocorrelation and crosscorrelation conditions for QT sequences.
The first step of the algorithm we present (Algorithm~\ref{alg:qts} below) is to look at the ``rowsums'' of the sequences.
For a binary sequence $A=[a_0,\dotsc,a_{n-1}]$, we define $r_A \coloneqq \sum_{i=0}^{n-1} a_i$ to be the \emph{rowsum\/} of $A$.
The following lemma is classic (cf.~\cite[eq.~15]{Williamson1944}).

\begin{lemma}[Rowsum Decomposition]
If $A$, $B$, $C$, $D$ are a quadruple of QT sequences of length $n$, then
\[ r_A^2 + r_B^2+r_C^2+r_D^2 = 4n \, . \]
\end{lemma}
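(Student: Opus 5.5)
The plan is to specialize \Cref{prop:psd} to frequency $t=0$. Setting $t=0$ in the definition of the DFT makes every exponential factor equal to $1$, so $\DFT_X(0)=\sum_{s=0}^{n-1}x_s=r_X$ for any binary sequence $X$. The rowsum is a real integer, so $\PSD_X(0)=\lvert\DFT_X(0)\rvert^2=r_X^2$. Since $(A,B,C,D)$ is a quadruple of QT sequences, \Cref{prop:psd} gives $\PSD_A(0)+\PSD_B(0)+\PSD_C(0)+\PSD_D(0)=4n$. Substituting $\PSD_X(0)=r_X^2$ for each of the four sequences yields the claimed identity.

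As a sanity check, I would also give a direct argument from the autocorrelation condition~\eqref{eq-ac}, which avoids the DFT entirely. For real sequences, summing the autocorrelation over all shifts gives
\[ \sum_{t=0}^{n-1}R_X(t)=\sum_{t=0}^{n-1}\sum_{r=0}^{n-1}x_rx_{r+t\bmod n}=\Bigl(\sum_{r}x_r\Bigr)\Bigl(\sum_{s}x_s\Bigr)=r_X^2. \]
This holds because, for fixed $r$, the index $r+t \bmod n$ runs over all residues exactly once as $t$ ranges over $[0,n)$. Summing~\eqref{eq-ac} over all $t$ then makes the left side equal to $r_A^2+r_B^2+r_C^2+r_D^2$ and the right side equal to $\sum_t\diracdelta_{0,t}4n=4n$.

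No step is a genuine obstacle. The only point needing care is the reindexing of the double sum modulo $n$ in the direct argument. In the DFT argument, the corresponding point is that conjugation does nothing for real $\pm1$ entries, so the PSD at $0$ is exactly the square of the rowsum. Neither the crosscorrelation conditions nor \Cref{thm:qt-wt-equiv} are needed.
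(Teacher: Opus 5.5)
Your proof is correct. The paper gives no proof of its own: it calls the lemma classic and cites Williamson, whose argument is exactly your first one — the all-ones vector is a common eigenvector of circulant matrices with eigenvalue equal to the rowsum, which is the $t=0$ case of \Cref{prop:psd} since $\DFT_X(0)=r_X$. Your second argument, summing \eqref{eq-ac} over all shifts and using $\sum_t R_X(t)=r_X^2$ (equivalently, applying \eqref{eq:sq} to the all-ones vector), is also valid and has the mild advantage of not needing the DFT at all.
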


This lemma means that for a given length $n$, we can limit our search to sequences whose rowsums satisfy
a specific rowsum decomposition of~$4n$.
We compute all possible decompositions of~$4n$ as the sum of
four integer squares and enumerate the QT sequences for each possible decomposition.

The next part of the algorithm is to pick one decomposition and separate the four sequences into two pairs.
Each pair can be used to check the part of the conditions they are required to satisfy as a whole.
For example, for all shifts $t\neq0$ the sum of the autocorrelations of the first pair is required to be the negation of the sum of the autocorrelations of the second pair (since they must add up to zero).
We can also apply this reasoning to the crosscorrelation conditions,
since once the first pair (say $(A,B)$) is known, \Cref{thm:qt-wt-equiv} implies
that $R_{A,B}(t)=R_{B,A}(t)$ for all~$t$.  Thus, if $R_{A,B}(t)\neq R_{B,A}(t)$ for some~$t$
then the pair $(A,B)$ cannot be part of a QT sequence and can be ignored.

Therefore, we enumerate the set of pairs of sequences
with rowsums $(r_A,r_B)$, separately enumerate the pairs of sequences with rowsums $(r_C,r_D)$, compute their auto- and crosscorrelations,
and check if there are pairs of sequences from the first and second enumerations that
have autocorrelations that exactly cancel out and crosscorrelations that do not change under a swap.
If there are, then we would have found a quadruple of binary sequences satisfying
\eqref{eq-ac} and~\eqref{eq-cc1} and therefore a potential quadruple of QT sequences.
To ensure the four sequences are in fact a quadruple of QT sequences, we need to check if they satisfy the other two crosscorrelation conditions
\eqref{eq-cc2} and \eqref{eq-cc3}.

The pseudocode for this algorithm is given in Algorithm~\ref{alg:qts}.  Intuitively,
the reason this algorithm is faster than enumerating triples of QT sequences
is because enumerating pairs with rowsums $(r_A,r_B)$ and $(r_C,r_D)$ is faster than
enumerating triples with rowsums $(r_A,r_B,r_C)$.  Moreover,
the step that finds matches in the correlation values (step~\ref{step-matching}) is fast in practice
when implemented efficiently as described in \Cref{sec:implementation}.
In order to shrink the search space and improve efficiency,
Algorithm~\ref{alg:qts} enumerates QT sequences up to Williamson-type equivalence
by using only certain decompositions of $4n$ into a sum of four squares that
can be assumed without loss of generality (see \Cref{prop:wt-rowsums} for justification).
Although Algorithm~\ref{alg:qts} is guaranteed to enumerate
all QT sequences of a given order up to Williamson-type equivalence, in general it generates the
multiple quadruples in the same equivalence class.  Thus, after running Algorithm~\ref{alg:qts}
we run a postprocessing step (described in Section~\ref{sec:equiv_filtering})
to remove duplicate sequences up to equivalence.

\begin{algorithm}
\begin{algorithmic}[1]
\STATE $\cL\coloneqq\emptyset$ \com Set that will contain the QT sequences found
\STATE \com Decompose $4n$ as a sum of four nonnegative integer squares \\
$\mathcal{D}\coloneqq\{\,(w,x,y,z)\in\N^4:w^2+x^2+y^2+z^2=4n\text{ and }w\geq x\geq y\geq z\,\}$ \label{step-decompose}
\FOR{$(w,x,y,z) \in \mathcal{D}$} 
\STATE Construct $\mathcal W$, $\mathcal X$, $\mathcal Y$, $\mathcal Z$, the sets of binary sequences with rowsums equal to $w$, $x$, $y$, $z$, respectively \label{step-construct-seqs}
\STATE $\cL_1\coloneqq \mathcal W\times\mathcal Z$ and $\cL_2\coloneqq \mathcal X\times\mathcal Y$ \label{step-gen}
\STATE \com Compute autocorrelations of each candidate pair in $\cL_1$ \\
Compute $[R_W(t)+R_Z(t):1\leq t\leq n/2]$ for all $(W,Z)\in\cL_1$ with $R_{W,Z}(t)=R_{Z,W}(t)$ for all $1\leq t\leq n/2$ and store in a list $C_1$ \label{step-store1}
\STATE \com Compute \emph{negated} autocorrelations of each candidate pair in $\cL_2$ \\
Compute $[-R_X(t)-R_Y(t):1\leq t\leq n/2]$ for all $(X,Y)\in\cL_2$ with $R_{X,Y}(t)=R_{Y,X}(t)$ for all $1\leq t\leq n/2$ and store in a list $C_2$ \label{step-store2} \\
\com Matching vectors in $C_1$ and $C_2$ provide solutions of~\eqref{eq-ac} and~\eqref{eq-cc3}
\FOR{each list of values $v$ appearing in both~$C_1$ and~$C_2$} \label{step-matching}
\STATE Get the set~$\cS$ of all the quadruples of sequences generating $v$ in both lists~$C_1$ and~$C_2$
\FOR{each quadruple $(W,X,Y,Z)$ in $\cS$}
\IF {$(W,X,Y,Z)$ is a quadruple of QT sequences}
\STATE Add $(W,X,Y,Z)$ to $\cL$ \label{step-add-seq}
\ENDIF
\ENDFOR
\ENDFOR \label{step-end-for}
\ENDFOR
\RETURN $\cL$
\end{algorithmic}
\caption{Algorithm for enumerating QT sequences of a given order~$n$
up to Williamson-type equivalence.
}
\label{alg:qts}
\end{algorithm}

\subsection{Implementation and optimizations}\label{sec:implementation}

One of the most important optimizations to speed up our search for QT sequences is to quickly remove unwanted sequences from consideration.  A well-known ``filtering''
technique enabling the discarding of many sequences is based on the Discrete Fourier Transform (DFT)
defined in \Cref{sec:qt-wt-equiv}.
\Cref{prop:psd} says that if $(A, B, C, D)$ is a quadruple of QT sequences of length $n$, then
\[ \PSD_A(t) + \PSD_B(t) + \PSD_C(t) + \PSD_D(t) = 4n \quad\text{for all $t\in\mathbb{Z}$}. \]
This property was already used by Williamson~\cite[eq.~14]{Williamson1944}, except
written in the form $\DFT_A(t)^2+\DFT_B(t)^2+\DFT_C(t)^2+\DFT_D(t)^2=4n$ because Williamson
assumed the symmetry of sequences (and if $X$ is conjugate symmetric then $\PSD_X(t)=\DFT_X(t)^2$).
\begin{corollary}\label{cor:PSD}
If\/ $(A, B, C, D)$ is a quadruple of QT sequences of length $n$, then
\[ \PSD_A(t)+\PSD_B(t) \leq 4n \text{ and } \PSD_C(t)+\PSD_D(t) \leq 4n . \]
\end{corollary}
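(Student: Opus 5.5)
The plan is to derive the corollary directly from \Cref{prop:psd}, using only that each power spectral density value is nonnegative. Suppose $(A,B,C,D)$ is a quadruple of QT sequences of length~$n$. By \Cref{prop:psd}, for every $t\in\mathbb{Z}$,
\[ \PSD_A(t)+\PSD_B(t)+\PSD_C(t)+\PSD_D(t)=4n . \]

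Next I will observe that each term is a nonnegative real number. Since the sequences are binary, $\DFT_X(t)$ is a complex number for every $X\in\{A,B,C,D\}$, and hence $\PSD_X(t)=\lvert\DFT_X(t)\rvert^2\geq 0$. In particular $\PSD_C(t)+\PSD_D(t)\geq 0$. Rearranging the displayed identity then gives
\[ \PSD_A(t)+\PSD_B(t)=4n-\bigl(\PSD_C(t)+\PSD_D(t)\bigr)\leq 4n . \]
The second inequality follows symmetrically, by dropping the nonnegative quantity $\PSD_A(t)+\PSD_B(t)$ instead.

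There is no real obstacle here. The only point to check is that $\PSD_X(t)=\DFT_X(t)\DFT_{X^*}(t)$ agrees with $\lvert\DFT_X(t)\rvert^2$, and is therefore real and nonnegative. This identification is already made in the definition preceding \Cref{prop:psd}. It holds because the entries are real, so $\DFT_{X^*}(t)=\overline{\DFT_X(t)}$.

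Finally, I will remark on how the corollary is used in the search. Any candidate pair in $\cL_1$ or $\cL_2$ with $\PSD_W(t)+\PSD_Z(t)>4n$ for some $t$ can be discarded. The same applies to a single sequence $X$ with $\PSD_X(t)>4n$, by the same nonnegativity argument.
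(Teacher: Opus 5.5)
Your proposal is correct and takes the same approach as the paper, which also derives the corollary directly from Proposition~\ref{prop:psd} using only that PSD values are nonnegative. Your check that $\DFT_{X^*}(t)=\overline{\DFT_X(t)}$, so that $\PSD_X(t)=\lvert\DFT_X(t)\rvert^2\geq 0$, is the one detail the paper leaves implicit, and you handle it correctly.
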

\begin{proof}
Follows from Proposition~\ref{prop:psd} since PSDs are nonnegative.
\end{proof}

\Cref{cor:PSD} is applied as a filter on step~\ref{step-gen} of \Cref{alg:qts}; if any pair
has a PSD value sum that is larger than $4n$ it is not included in $\cL_1$ or $\cL_2$.
Similarly, on step~\ref{step-construct-seqs} any sequence that has a PSD value larger than $4n$
is not included in $\mathcal W$, $\mathcal X$, $\mathcal Y$, or $\mathcal Z$.
In our implementation, the PSD values were computed using FFTW~\cite{Frigo2005}.

For step~\ref{step-decompose}, finding all decompositions of $4n$ into a sum of four integer squares can be done with brute force, but not all decompositions are necessary.
One immediate optimization is to only consider decompositions $(w,x,y,z)$ where the parities of $w$, $x$, $y$, and~$z$ match the parity
of $n$, because if $X\in\{\pm1\}^n$ then $r_X\equiv n\pmod2$.  The following proposition gives justification
for why it is sufficient to only consider nonnegative decompositions in sorted order.
\begin{proposition}\label{prop:wt-rowsums}
Every quadruple of QT sequences is Williamson-type equivalent to a quadruple whose rowsums
are of the form $(w,x,y,z)$ where $w\geq x\geq y\geq z\geq 0$.
\end{proposition}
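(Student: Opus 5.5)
The plan is to use only the two Williamson-type equivalence operations SN (single negate) and SS (single swap). Since these are Williamson-type equivalence operations, they map quadruples of Williamson-type sequences to quadruples of Williamson-type sequences. By \Cref{thm:qt-wt-equiv} those are exactly the QT quadruples, so the result of applying them is again a quadruple of QT sequences.

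The first step is to record how each operation changes the rowsum vector $(r_A,r_B,r_C,r_D)$. Negating a sequence $X$ negates its rowsum, because $r_{-X}=\sum_i(-x_i)=-r_X$, and leaves the other three rowsums unchanged. Swapping two sequences swaps the corresponding two rowsums.

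Next, starting from an arbitrary QT quadruple $(A,B,C,D)$, I would apply SN to each sequence whose rowsum is negative. After at most four such negations all rowsums are nonnegative. Then I would sort the four sequences by rowsum in nonincreasing order. This is possible because the group generated by transpositions is the full symmetric group $S_4$, so any ordering is reached by a composition of SS operations. The resulting quadruple has rowsums $(w,x,y,z)$ with $w\ge x\ge y\ge z\ge 0$.

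There is essentially no obstacle here; the only subtle point is the one already handled. SN and SS by themselves need not preserve the QT crosscorrelation conditions \eqref{eq-cc1}--\eqref{eq-cc3}, which is why the NS operation pairs a negation with a swap. They do preserve pairwise amicability $R_{X,Y}=R_{Y,X}$, since negating $X$ negates both sides and swapping only relabels the pairs. \Cref{thm:qt-wt-equiv} is what lets us move between the QT and Williamson-type settings, so each step stays inside the class of QT quadruples.
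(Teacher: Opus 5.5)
Your proposal is correct and follows the paper's own proof: apply SN to make every rowsum nonnegative, then use SS to sort the rowsums in nonincreasing order. Your remark that SN and SS preserve pairwise amicability, so \Cref{thm:qt-wt-equiv} keeps each result a QT quadruple, matches the paper's justification for calling these valid Williamson-type equivalence operations.
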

\begin{proof}
By applying the SN operation, every QT sequence can be made equivalent to one in which all
rowsums are nonnegative.  By then applying the SS operation those rowsums can then be sorted
in descending order while preserving the nonnegativity of the rowsums.  Thus, every QT sequence
is equivalent to one with nonnegative rowsums sorted in descending order.
\end{proof}
The following proposition
provides a variant of \Cref{prop:wt-rowsums} but in terms of QT equivalence
instead of Williamson-type equivalence.
\begin{proposition}\label{prop:rowsum}
Every quadruple of QT sequences is QT equivalent to a quadruple whose rowsums are of the form
$(\pm w,x,y,z)$ where $w\geq x\geq y\geq z\geq 0$.  Furthermore, if $w=x$, $x=y$, $y=z$, or $z=0$, then
the sign of the first rowsum can additionally be taken to be positive.
\end{proposition}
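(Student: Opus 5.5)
The plan is to track how the QT equivalence operations act on the rowsum vector $(r_A,r_B,r_C,r_D)$, using only the NS operation and the derived DN and DS operations. Negating a sequence negates its rowsum, and swapping two sequences swaps their rowsums. So I want to show that signs and order can be normalized, except possibly for one leftover sign.

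\textbf{Step 1: use only an even number of negations.} DN negates any two sequences. Applying DN to suitable pairs, I can make at least three of the four rowsums nonnegative. Concretely, pair up the negative entries, flipping two at a time. If the number of negative rowsums is odd, exactly one negative remains, and I may place it on whichever sequence I like by applying DN to that sequence together with another one.

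\textbf{Step 2: sort.} The permutations available are:
\begin{itemize}
\item DS, which gives double transpositions;
\item NS, which is a transposition combined with a single negation.
\end{itemize}
The double transpositions generate only the Klein four-group, not all even permutations. So I will instead use NS directly. Each NS contributes one transposition and one sign flip. Hence any permutation $\pi$ of the four sequences can be realized together with a sign change of total parity $\operatorname{sgn}\pi$. Combining this with DN, which can relocate and cancel signs in pairs, the achievable operations are exactly the signed permutations whose number of negations has the same parity as the permutation.

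Since NS already provides any transposition with a sign, I first choose the permutation that sorts the absolute values in descending order, $w\ge x\ge y\ge z\ge0$. I then use DN to push every negative sign except possibly one into the first slot. The invariant controlling the leftover sign is
\[
\operatorname{sgn}(\pi)\cdot\prod_{i}\operatorname{sign}(\text{rowsum}_i),
\]
which is preserved by all three operations (with care about zero rowsums). So the result is $(\pm w,x,y,z)$, proving the first claim.

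\textbf{Step 3: the ``furthermore'' clause.} If some rowsum is zero, i.e.\ $z=0$, then negating that sequence changes nothing in the rowsum vector. Applying DN to the first and last sequences therefore flips the sign of $w$ for free. If two absolute values coincide, say $w=x$ (or $x=y$, or $y=z$), apply NS to that equal pair: swapping them leaves the sorted vector unchanged, and negating one of them flips one sign. A subsequent DN then moves the sign back so that the only negative entry is in the first position. The net effect is a single extra sign flip, which makes the first rowsum positive.

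\textbf{Main obstacle.} The delicate part is the bookkeeping in Step 2. I must check that NS, DN, and DS together realize every signed permutation with the parity constraint above, and that the parity argument remains valid when some entries are zero or repeated, since in those cases the ``sign'' of an entry is ambiguous or a swap is invisible. I will handle this by arguing directly that the moves in Step 3 are legitimate compositions of NS and DN, rather than relying on the invariant in those degenerate cases.
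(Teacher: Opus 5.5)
Your proposal is correct and follows essentially the paper's argument. Both treat NS, DN, and DS as signed permutations of the rowsum vector in which the number of negations matches the parity of the permutation. Both then sort by absolute value while pushing any leftover minus sign into the first slot, and remove that sign either by DN on the first and last sequences when $z=0$ or by an NS swap of two equal-magnitude entries.

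The only difference is the order of moves. The paper first uses DN and DS to get the last three rowsums nonnegative and sorted, with the first among the two largest, and then finishes with a single NS. You sort entirely with NS transpositions and then collect the signs with DN. This has the minor advantage of not needing DS to supply $3$-cycles; your observation that disjoint double swaps generate only the Klein four-group is apt, since that group alone cannot carry out the paper's first step.
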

\begin{proof}
By applying appropriate DN and DS operations, every QT sequence can be made equivalent to one in which the last three rowsums are all nonnegative
and sorted in decreasing order and the first rowsum is the largest or second largest rowsum in absolute value.  By applying the NS operation, one can also sort the first two rowsums by absolute value (by swapping the first two sequences, if necessary, then negating them), so any QT sequence is equivalent to one with a rowsum decomposition of the form
$(\pm w,x,y,z)$ where $w\geq x\geq y\geq z\geq 0$.
Furthermore, in cases where $z=0$ one can apply the DN operation to make the first rowsum positive while
keeping the rowsums in sorted order (by negating the first and last sequences).
Similarly, if at least two values in $\{w,x,y,z\}$ are equal, one can apply the NS operation to make the first rowsum positive
while keeping the rowsums in sorted order (by negating the first sequence then swapping the sequences with equal rowsums).
So if $z=0$ or $w=x$ or $x=y$ or $y=z$, the decomposition $(-w,x,y,z)$ can be avoided.
\end{proof}

In case one wants to modify \Cref{alg:qts} to enumerate QT sequences up to
QT equivalence (instead of Williamson-type equivalence),
an efficient way to do this would be to modify step~\ref{step-add-seq}:
after adding $(W,X,Y,Z)$ to $\cL$, also add $(-W,X,Y,Z)$ to $\cL$,
though this would only need to be done
in cases when $z\neq0$, $z\neq y$, $y\neq x$, and $x\neq w$.
\Cref{prop:rowsum} guarantees in cases when
$w=x$ or $x=y$ or $y=z$ or $z=0$ that \Cref{alg:qts} generates
all QT sequences up to QT equivalence.

For step~\ref{step-gen}, we choose to generate the pairs so that the largest and smallest rowsums $w$ and $z$ belong to the same pair.
We attempted all possible pairing combinations and found this to be the most efficient.

By \Cref{thm:qt-wt-equiv}, all QT sequences satisfy $R_{X,Y}(t)=R_{Y,X}(t)$ and $R_{W,Z}(t)=R_{Z,W}(t)$,
so any pairs not satisfying these amicability conditions are not considered in steps~\ref{step-store1} and~\ref{step-store2}.
This filter dramatically reduces the number of pairs added to the lists $C_1$ and $C_2$.
For example, in length 20, over 16~billion pairs are generated without this filter,
while only 625,896 are generated using it---reducing the number of pairs generated by a factor of about $26{,}000$.

For steps~\ref{step-store1} and~\ref{step-store2}, the reason one does not need to consider the correlation values at $t=0$
is because the autocorrelation at $t=0$ is the rowsum of the sequence (which has already been fixed by construction)
and the crosscorrelation difference $R_{X,Y}(0)-R_{Y,X}(0)$ is always zero for real sequences $X$ and $Y$.
Similarly, the reason the correlation values for $n/2<t<n$ do not need to be considered is because of the symmetries
$R_X(t)=R_X(n-t)$ and $R_{X,Y}(t)=R_{Y,X}(n-t)$ for real sequences $X$ and $Y$.

For steps~\ref{step-store1} and~\ref{step-store2}, we store the list of autocorrelation values generated by one pair as one line in a file,
and we store the indices of the sequences used to generate those values at the end of the line.
This way, we can quickly determine which pair the correlation values on each line was derived from, even after the lines of the
files have been permuted.

In order to implement step~\ref{step-matching} efficiently, we sort the lines of the files containing the correlation values by lexicographic order.
This way, we iterate through the files simultaneously using a linear scan to efficiently find all matching correlation values from the
file containing the list $C_1$ and the file containing the list $C_2$.
We test all matches and keep the ones that satisfy the three crosscorrelation conditions.

Computing the correlation vectors in steps~\ref{step-store1} and~\ref{step-store2} using
the straightforward approach uses $O(n^2)$ arithmetic operations.  An optimization
is to match PSD values instead of matching correlation values.  That is,
instead of finding matches where
\[[R_W(t)+R_Z(t):1\leq t\leq n/2]=[-R_X(t)-R_Y(t):1\leq t\leq n/2],\]
by \Cref{prop:psd} it is sufficient to find matches where
\[[\PSD_W(t)+\PSD_Z(t)\mspace{-1.5mu}:\mspace{-1.5mu}1\leq t\leq n/2]=[4n-\PSD_X(t)-\PSD_Y(t)\mspace{-1.5mu}:\mspace{-1.5mu}1\leq t\leq n/2].\]
The PSD values were already computed as a part of the filtering used in step~\ref{step-construct-seqs} and
these values can be cached, so that they do not need to be recomputed.
In general PSD values are real and not integers, so
in order to write the value of $\PSD_W(t)+\PSD_Z(t)$ and $4n-\PSD_X(t)-\PSD_Y(t)$ to a file we round
them to the nearest integer.
This has the downside of potentially introducing spurious matches.  However, in practice this is
not a problem and once a match is found it can be checked easily if~\eqref{eq-ac} holds or not.

Similarly, instead of verifying the conditions
$R_{W,Z}(t)=R_{Z,W}(t)$ and $R_{X,Y}(t)=R_{Y,X}(t)$ in steps~\ref{step-store1} and~\ref{step-store2}
it is sufficient to work in the frequency domain and instead verify
\[ \CPSD_{W,Z}(t)=\CPSD_{Z,W}(t) \quad\text{and}\quad \CPSD_{X,Y}(t)=\CPSD_{Y,X}(t) . \]
This is beneficial as the calculation of the CPSD values can be optimized
by caching the values $\DFT_W(t)$, $\DFT_X(t)$, $\DFT_Y(t)$,
$\DFT_Z(t)$ for $1\leq t\leq n/2$ in step~\ref{step-construct-seqs}.
In order to account for floating-point roundoff error
we filter pairs $(W,Z)$ for which there is some $t$ with
\[ \lvert\CPSD_{W,Z}(t)-\CPSD_{Z,W}(t)\rvert > 10^{-4} , \]
since FFTW will compute
the $\CPSD$ values many orders of magnitude more accurately than $10^{-4}$.
The $\CPSD_{W,Z}(t)-\CPSD_{Z,W}(t)$ values have no real component, because
they are of the form $ab^*-ba^*$ for complex $a$, $b$ and as noted
in \Cref{sec:qt-wt-equiv} such numbers have no real component.

\subsection{Equivalence filtering}\label{sec:equiv_filtering}

After enumerating all QT sequences of length~$n$ using \Cref{alg:qts}, we classify the sequences
up to Hadamard equivalence (as described in \Cref{sec:matrices}) as well as
QT and Williamson-type equivalence (as described in \Cref{sec:equiv}).
\Cref{alg:qts} will generate all QT sequences of length~$n$ whose rowsums are in a form
given by \Cref{prop:wt-rowsums}.  Because \Cref{prop:wt-rowsums} was proven only using the SS and SN operations,
\Cref{alg:qts} will produce at least one quadruple
of QT sequences from each Williamson-type equivalence class.
However, if we also want to enumerate QT sequences up to Hadamard equivalence, we need
to enumerate up to QT equivalence instead, because then \Cref{thm:equivalence}
implies the list will be complete up to Hadamard equivalence.
In \Cref{sec:implementation}, we describe how to modify \Cref{alg:qts} to enumerate
sequences up to QT equivalence (see the remark after \Cref{prop:rowsum}).
We now describe how to filter out any duplicates up to QT equivalence.

As a first step in the equivalence filtering process
we take the output of \Cref{alg:qts} and filter its output to
keep exactly one ``canonical'' representative of every NS equivalence class.  The canonical representative
of an equivalence class will be
its lexicographically minimal quadruple when the sequences in a quadruple of QT sequences are concatenated.
In order to find the lexicographic minimum, we first use the NS operation to make the initial entry
of each sequence in the quadruple $-1$.  Next, we use a DS operation to make the lexicographically
least sequence appear first in the quadruple and the lexicographically second-least sequence appear
second in the quadruple.  If all sequences now appear in lexicographically ascending order, the
quadruple of QT sequences is canonical.  Otherwise, the last two sequences appear out of order.
If there are two identical sequences in the quadruple then
we use the DS operation to swap the identical sequences and swap the third and fourth sequences, sorting the sequences in
the quadruple in ascending order and making the quadruple canonical.  If there are not two identical
sequences in the quadruple we use the NS operation to swap the third and fourth sequences
and finally negate the fourth sequence to make the quadruple canonical.

Using the above process we compute the canonical representative of each quadruple of QT sequences found by \Cref{alg:qts}
and discard quadruples whose canonical form have been seen before, thereby
producing a complete list of QT sequences up to NS equivalence.
Now, we want to filter this list up to the QT equivalences of \Cref{sec:equiv}.  To do this, we
adapt a filtering algorithm described by Lumsden, Kotsireas, and Bright~\cite[Alg.~7]{Lumsden2025}.  First, consider
the symmetry group $S$ generated by the AN, DE, CS, and DH operations (letting~$\varphi$
denote the Euler totient function, this group is of size $2^6n \mspace{1.5mu} \varphi(n)$ when $n$ is even
and not divisible by 4, of size $2^4n\mspace{1.5mu}\varphi(n)$ when $n$ is divisible by 4, and
of size $n \mspace{1.5mu} \varphi(n)$ when $n$ is odd).
To find the lexicographically minimal representative of a quadruple
of QT sequences $(W,X,Y,Z)$ we apply all operations in $S$ to $(W,X,Y,Z)$, thereby forming $|S|$ QT sequences,
and use the above sorting process to lexicographically minimize each QT sequence using the NS operation.
The global lexicographic minimum of these $|S|$ sequences becomes the minimal representative of the equivalence class
generated by the operations in $\{\text{NS},\text{AN},\text{DE},\text{CS},\text{DH}\}$.

We also filter the list up to Hadamard equivalence.
By \Cref{thm:equivalence}, we know that the filtered list computed above
contains at least one representative from every Hadamard equivalence class.
However,
more filtering may be necessary to
remove duplicates up to Hadamard equivalence.
Starting from the complete list of quadruples
up to QT equivalence we convert each quadruple of QT sequences into a Hadamard matrix and reduce the matrix
to its graph representation as described by McKay~\cite{McKay1979}.  Then we use the program \textsc{nauty}~\cite{McKay2014}
to convert each graph into a canonical form.  Two graphs have the same canonical form if and only if they are
isomorphic, so we filter any QT sequence that results in a graph whose canonical form has been seen before.

The process for filtering up to QT equivalence described above can also be used to filter up to Williamson-type equivalence,
except we use single-swap (SS) and single-negate (SN) operations instead
of negate-and-swap (NS) operations, and similarly use the single half shift (SH)
instead of the double half shift (DS).  In this case the canonical representative found
with the $\{\text{SS},\text{SN}\}$ operations will always have each sequence sorted in lexicographically
decreasing order and the initial entry of each sequence will be $-1$.

In practice, to enumerate QT sequences up to each of Williamson-type equivalence, QT equivalence, and Hadamard equivalence we
do the following.
\begin{enumerate}
\item Enumerate sequences up to Williamson-type equivalence using \Cref{alg:qts} and place them in a list.
\item Filter the list by Williamson-type equivalence to remove duplicate quadruples up to equivalence.
\item For each $(W,X,Y,Z)$ in the list, add $(-W,X,Y,Z)$ to the list.
Following this, if $n$ is even, for each $(W,X,Y,Z)$ in the list, add $(s^{n/2}(W),X,Y,Z)$
to the list.
\item Filter the list up to QT equivalence.
This list will be exhaustive up to QT equivalence because
each NS equivalence class splits
into at most two classes under $\{\text{SS},\text{SN}\}$ equivalence, and
when it does split $(W,X,Y,Z)$ and $(-W,X,Y,Z)$ will be in separate classes.
Similarly, each DH equivalence
class splits into at most two classes under SH equivalence, and when it does
split $(W,X,Y,Z)$ and $(s^{n/2}(W),X,Y,Z)$ will be in separate classes.
\item Filter the list up to Hadamard equivalence.
\end{enumerate}

\subsection{Results}\label{sec:results}

We implemented the algorithm in Rust 1.91.0 and Bash shell scripts.
The algorithm was run on a machine with an AMD EPYC CPU running at 2.7~GHz and configured
to use no more than 4 GiB of RAM\@.
Sorting was done using the GNU sort utility.
The code and the results for the algorithm can be found at \url{https://github.com/colinotp/quaternion-sequences}.
A summary of the results found by the algorithm is provided in Table~\ref{table:results}.

\begin{table}
\caption{\label{table:results} Table of results for Algorithm~\ref{alg:qts} by length $n$.
$W_\text{equ}$, $Q_\text{equ}$, and $H_\text{equ}$ indicate the total number of QT sequences found
up to Williamson-type, QT, and Hadamard equivalence, respectively.
``Time'' is the total enumeration runtime in seconds,
``Pairs'' is the total number of pairs generated in step~\ref{step-gen} (generating these is
the bottleneck of the algorithm),
and ``Space'' is the amount of disk space used in MiB\@.}
\centering
\begin{tabular}{rrrrrrr}
$n$ & $W_{\text{equ}}$ & $Q_{\text{equ}}$ & $H_{\text{equ}}$ & Time (s) & Pairs & Space \\
\hline
1 & 1 & 1 & 1 & 0.16 & 2 & 0.0 \\
2 & 1 & 1 & 1 & 0.08 & 4 & 0.0 \\
3 & 1 & 1 & 1 & 0.08 & 6 & 0.0 \\
4 & 2 & 3 & 2 & 0.19 & 46 & 0.0 \\
5 & 1 & 1 & 1 & 0.08 & 20 & 0.0 \\
6 & 1 & 1 & 1 & 0.16 & 48 & 0.0 \\
7 & 2 & 3 & 3 & 0.15 & 182 & 0.0 \\
8 & 3 & 4 & 3 & 0.08 & 384 & 0.0 \\
9 & 4 & 7 & 7 & 0.31 & 999 & 0.0 \\
10 & 2 & 4 & 2 & 0.23 & 770 & 0.0 \\
11 & 1 & 2 & 2 & 0.15 & 715 & 0.0 \\
12 & 5 & 10 & 6 & 0.80 & 6288 & 0.2 \\
13 & 4 & 6 & 6 & 3.14 & 8216 & 0.4 \\
14 & 5 & 12 & 10 & 3.15 & 5334 & 0.3 \\
15 & 4 & 7 & 7 & 23.46 & 15732 & 1.0 \\
16 & 18 & 44 & 19 & 121.20 & 206480 & 7.9 \\
17 & 4 & 5 & 5 & 361.36 & 39372 & 3.6 \\
18 & 28 & 88 & 82 & 2001.79 & 275184 & 17.5 \\
19 & 6 & 11 & 11 & 7753.85 & 200526 & 22.0 \\
20 & 24 & 84 & 54 & 17632.48 & 625896 & 46.1 \\
21 & 7 & 13 & 13 & 141267.63 & 561519 & 86.1 \\
\end{tabular}
\end{table}

In the appendix we provide the complete enumeration of all 124 sets of
QT sequences (up to Williamson-type equivalence) for lengths $n\leq21$
found by our implementation.
By combining the enumerations of
Holzmann, Kharaghani, and Tayfeh-Rezaie~\cite{Holzmann2008}
and Bright, Kotsireas, and Ganesh~\cite{BKG20-perfect-sequences}
all sets of Williamson sequences (i.e.,
\emph{symmetric} QT sequences)
are known for $n\leq60$.
Up to Williamson-type equivalence, it was previously
determined that there are exactly
94 sets of Williamson sequences for $n\leq21$ and our search
confirms this amount. Moreover, it determines
there are an additional 30 sets of QT sequences for lengths $n\leq21$
that are \emph{not} equivalent to Williamson sequences because they
are only equivalent to sets of sequences that are non-symmetric.

Interestingly, Fitzpatrick
and O'Keeffe~\cite{Fitzpatrick2023} prove that for certain odd
lengths $n$ (when $n$ is prime or of the form
$n=pq$ for distinct odd primes $p$ and $q$ with each
a generator of the other) that \emph{all} sets of Williamson-type
sequences of length $n$ must be Williamson-type equivalent to a set of symmetric sequences.
For $n\leq21$, the only odd lengths not covered by their theorem
are $n=9$ and $n=21=3\cdot7$ (because 7 is not a generator modulo~3).
Our search reveals that their theorem also holds for $n=21$, because
we find no non-symmetric QT sequences of length 21.
For $n=9$, a set of non-symmetric QT sequences exists;
an example was already known to Fitzpatrick
and O'Keeffe, and our enumeration shows up to Williamson-type equivalence this is the \emph{only}
set of non-symmetric QT sequences of length 9.
This is currently the only known set of Williamson-type sequences
of odd length that is not equivalent to a set of symmetric sequences,
though we find 29 examples of sets of
Williamson-type sequences of even length that are not equivalent to a
set of symmetric sequences.

Fitzpatrick and O'Keeffe's result combined with the
exhaustive search for Williamson sequences of length $n=23$
by Baumert and Hall~\cite{Baumert1965}
implies that there is exactly one set of QT sequences of length $23$
up to Williamson-type equivalence.
We found this set generates two inequivalent sets of QT sequences
up to QT equivalence and Hadamard equivalence.

\section{Construction of quaternionic Hadamard matrices}
\label{sec:qhm}

Perfect sequences correspond in a straightforward manner to Hadamard matrices. As explained in \Cref{sec-qhm-context}, such matrices are of interest in communication and information theory. This further motivates a deeper study of quaternionic Hadamard matrices (QHMs). The new sequences discovered through the computational search in \Cref{sec:search} yield new quaternionic Hadamard matrices. We provide examples of such matrices of interest in quantum communication in \Cref{subsection:patterns}. In contrast to the real and complex cases, these matrices may be used to generate uncountably many (equivalent) QHMs; see \Cref{sec-inf-families}. In \Cref{sec-characterization}, we propose an approach for characterizing QHMs and illustrate it in the case of order five matrices. In \Cref{sec-new-qhm}, we formally establish the nonequivalence of matrices resulting from the computational search for perfect sequences.

\subsection{Hadamard matrices from perfect sequences} 
\label{subsection:patterns}

Perfect quaternion sequences can be used to generate quaternionic Hadamard matrices in a straightforward manner. Given such a sequence~$S$ of length~$n$, we may construct an~$n \times n$ circulant quaternionic Hadamard matrix~$M$ by taking the first row of the matrix to be the sequence~$S$, and the rest of the rows to be successive cyclic shifts of the first row. Formally, we define~$M_{ij} \eqdef S_{j-i}$ for~$0\leq i,j<n$, where we take the indices to be modulo~$n$.

As explained in \Cref{sec:results}, our computational search produces numerous
examples of perfect quaternion sequences that are of interest in quantum information and communication.
Consider the quaternionic Hadamard matrices derived from these sequences as described above, and the equivalent normalized matrices. (Recall that normalization is the process of transforming their first row and column to all~$1$s via equivalence operations.)
We obtain normalized quaternionic Hadamard matrices with non-commuting entries
for all lengths $n$ with~$3 < n \le 21$ in the search described in \Cref{sec:results}.

Below we list a few examples of the matrices we find.
The entries are described using symbols given by the mapping
\begin{equation}
\label{eq-qt-pqs-mapping}
\Big[\begin{array}{ccccccccccccc}
\texttt + & \texttt - & \texttt i & \texttt j & \texttt k & \texttt q & \texttt x & \texttt y & \texttt z & \texttt s & \texttt u & \texttt v & \texttt w \\
1 & -1 & \qi & \qj & \qk & q & q\qi & q\qj & q\qk & q^* & q^*\qi & q^*\qj & q^*\qk
\end{array}\Big] , 
\end{equation}
with a capital letter standing for the negation of the quaternion
represented by the same letter in lower case.
For example, \texttt{X} denotes $-q\qi$.
Normalized quaternionic Hadamard matrices for
orders 6, 8, 10, and 12 are given by
\setcounter{MaxMatrixCols}{20}
\begin{gather*}
\begin{bmatrix}
\texttt + &\texttt + &\texttt + &\texttt + &\texttt + &\texttt + \\
\texttt + &\texttt s &\texttt Q &\texttt k &\texttt S &\texttt x \\
\texttt + &\texttt z &\texttt S &\texttt + &\texttt Q &\texttt U \\
\texttt + &\texttt - &\texttt + &\texttt - &\texttt + &\texttt - \\
\texttt + &\texttt S &\texttt Q &\texttt K &\texttt S &\texttt X \\
\texttt + &\texttt Z &\texttt S &\texttt - &\texttt Q &\texttt u \\
\end{bmatrix},
\qquad
\begin{bmatrix}
\texttt + &\texttt + &\texttt + &\texttt + &\texttt + &\texttt + &\texttt + &\texttt +\\
\texttt + &\texttt Y &\texttt I &\texttt q &\texttt - &\texttt y &\texttt i &\texttt Q\\
\texttt + &\texttt I &\texttt - &\texttt i &\texttt + &\texttt I &\texttt - &\texttt i\\
\texttt + &\texttt q &\texttt i &\texttt Y &\texttt - &\texttt Q &\texttt I &\texttt y\\
\texttt + &\texttt - &\texttt + &\texttt - &\texttt + &\texttt - &\texttt + &\texttt -\\
\texttt + &\texttt y &\texttt I &\texttt Q &\texttt - &\texttt Y &\texttt i &\texttt q\\
\texttt + &\texttt i &\texttt - &\texttt I &\texttt + &\texttt i &\texttt - &\texttt I\\
\texttt + &\texttt Q &\texttt i &\texttt y &\texttt - &\texttt q &\texttt I &\texttt Y\\
\end{bmatrix},
\\
\begin{bmatrix}
\texttt+ &\texttt+ &\texttt+ &\texttt+ &\texttt+ &\texttt+ &\texttt+ &\texttt+ &\texttt+ &\texttt+ \\
\texttt+ &\texttt J &\texttt+ &\texttt Y &\texttt U &\texttt- &\texttt j &\texttt- &\texttt y &\texttt u \\
\texttt+ &\texttt Q &\texttt q &\texttt W &\texttt y &\texttt+ &\texttt Q &\texttt q &\texttt W &\texttt y \\
\texttt + &\texttt w &\texttt J &\texttt Z &\texttt + &\texttt - &\texttt W &\texttt j &\texttt z &\texttt - \\
\texttt + &\texttt j &\texttt U &\texttt z &\texttt J &\texttt + &\texttt j &\texttt U &\texttt z &\texttt J \\
\texttt + &\texttt - &\texttt - &\texttt j &\texttt - &\texttt - &\texttt + &\texttt + &\texttt J &\texttt + \\
\texttt + &\texttt J &\texttt - &\texttt y &\texttt v&\texttt +&\texttt J&\texttt - &\texttt y &\texttt  v \\
\texttt + &\texttt Y &\texttt y &\texttt V &\texttt q &\texttt - &\texttt y &\texttt Y &\texttt v &\texttt Q \\
\texttt + &\texttt S &\texttt J &\texttt Z &\texttt - &\texttt + &\texttt S &\texttt J &\texttt Z &\texttt - \\
\texttt + &\texttt j &\texttt v &\texttt z &\texttt J &\texttt - &\texttt J &\texttt V &\texttt Z &\texttt j 
\end{bmatrix},
\mspace{12.5mu}
\begin{bmatrix}
\texttt + &\texttt + &\texttt + &\texttt + &\texttt + &\texttt + &\texttt + &\texttt + &\texttt + &\texttt + &\texttt + &\texttt +\\
\texttt + &\texttt W &\texttt Y &\texttt - &\texttt W &\texttt y &\texttt - &\texttt w &\texttt y &\texttt + &\texttt w &\texttt Y\\
\texttt + &\texttt Y &\texttt W &\texttt - &\texttt y &\texttt w &\texttt + &\texttt Y &\texttt W &\texttt - &\texttt y &\texttt w\\
\texttt + &\texttt - &\texttt - &\texttt + &\texttt + &\texttt - &\texttt - &\texttt + &\texttt + &\texttt - &\texttt - &\texttt +\\
\texttt + &\texttt W &\texttt y &\texttt + &\texttt W &\texttt y &\texttt + &\texttt W &\texttt y &\texttt + &\texttt W &\texttt y\\
\texttt + &\texttt y &\texttt w &\texttt - &\texttt y &\texttt W &\texttt - &\texttt Y &\texttt W &\texttt + &\texttt Y &\texttt w\\
\texttt + &\texttt - &\texttt + &\texttt - &\texttt + &\texttt - &\texttt + &\texttt - &\texttt + &\texttt - &\texttt + &\texttt -\\
\texttt + &\texttt w &\texttt Y &\texttt + &\texttt W &\texttt Y &\texttt - &\texttt W &\texttt y &\texttt - &\texttt w &\texttt y\\
\texttt + &\texttt y &\texttt W &\texttt + &\texttt y &\texttt W &\texttt + &\texttt y &\texttt W &\texttt + &\texttt y &\texttt W\\
\texttt + &\texttt + &\texttt - &\texttt - &\texttt + &\texttt + &\texttt - &\texttt - &\texttt + &\texttt + &\texttt - &\texttt -\\
\texttt + &\texttt w &\texttt y &\texttt - &\texttt W &\texttt Y &\texttt + &\texttt w &\texttt y &\texttt - &\texttt W &\texttt Y\\
\texttt + &\texttt Y &\texttt w &\texttt + &\texttt y &\texttt w &\texttt - &\texttt y &\texttt W &\texttt - &\texttt Y &\texttt W
\end{bmatrix}.
\end{gather*}

These matrices may be used to generate infinite families of Hadamard matrices via algebra automorphisms of the quaternions, as explained in more detail in \Cref{sec-inf-families}.
They may also be used to characterize matrices with the same pattern of entries as in \Cref{sec-characterization}.
However, the matrices so generated may not include all the quaternionic Hadamard matrices of a given order.
We examine this possibility later in the section.

\subsection{Infinite families of fixed order}
\label{sec-inf-families}

For a fixed real or complex Hadamard matrix~$M$, there are only a finite number of distinct normalized matrices equivalent to~$M$. 
\begin{lemma}
\label{lem-finite-equiv-class}
Suppose~$M$ is a real (or complex) Hadamard matrix. The number of normalized real (respectively, complex) Hadamard matrices which are equivalent to~$M$ is finite.
\end{lemma}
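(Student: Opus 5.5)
By \Cref{lem-equivalence}, every matrix equivalent to $M$ has the form $M' = D_1 P_1 M P_2 D_2$. Here $P_1,P_2$ are permutation matrices and $D_1,D_2$ are diagonal unitary matrices. In the real case their diagonal entries are $\pm1$; in the complex case they are unit complex numbers. The plan is to show that once $P_1$ and $P_2$ are fixed, the normalized matrix $M'$ is completely determined. Since there are only $(n!)^2$ choices of $(P_1,P_2)$, this gives at most $(n!)^2$ normalized matrices equivalent to $M$.

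So fix $P_1,P_2$ and put $N \eqdef P_1 M P_2$. This is a Hadamard matrix whose entries are real or complex numbers of modulus one. Write $D_1=\diag(x_0,\dots,x_{n-1})$ and $D_2=\diag(y_0,\dots,y_{n-1})$. Then $M'_{ij} = x_i N_{ij} y_j$, and in the real and complex cases all these scalars commute. Normalization of the first column gives $x_i N_{i0} y_0 = 1$, so $x_i = (N_{i0} y_0)^{-1}$. Normalization of the first row gives $x_0 N_{0j} y_j = 1$, so $y_j = (x_0 N_{0j})^{-1}$. Substituting, and using commutativity,
\[ M'_{ij} = \frac{N_{ij}}{N_{i0}\, N_{0j}\, x_0 y_0} . \]
The $(0,0)$ entry condition gives $x_0 N_{00} y_0 = 1$, that is, $x_0 y_0 = N_{00}^{-1}$. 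Hence
\[ M'_{ij} = \frac{N_{ij} N_{00}}{N_{i0} N_{0j}} , \]
which depends only on $N$, and so only on $(P_1,P_2)$. The free parameter that remains (scaling $x_0$ by $\lambda$ and $y_0$ by $\lambda^{-1}$, which forces every $x_i$ to scale by $\lambda$ and every $y_j$ by $\lambda^{-1}$) cancels and does not change $M'$. The count of at most $(n!)^2$ follows, and the real case is the special case with $\pm1$ entries.

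The only step that needs care is the one where commutativity is used: rearranging $x_i N_{ij} y_j$ so that the product $x_0 y_0$ can be pulled out and cancelled. This is precisely what fails for quaternions. There, $x_i N_{ij} y_j$ cannot be rearranged, and conjugating by a unit quaternion (\Cref{prop-inner-aut}) produces genuinely different normalized matrices. I would add a remark flagging this, since it motivates the contrasting result for QHMs.
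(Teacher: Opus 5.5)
Your proof is correct and takes essentially the same route as the paper. Both arguments fix $P_1MP_2$ (finitely many choices), use the all-ones first row and column to solve for the diagonal scalings, and use commutativity to show that the normalized matrix depends only on $P_1MP_2$. The paper expresses the result as $x_{00}N''$; you write it as $N_{00}N_{ij}/(N_{i0}N_{0j})$, which additionally yields your explicit bound of $(n!)^2$ and covers the real case without a separate argument.
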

\begin{proof}
By \Cref{lem-equivalence} and the note following it, any matrix~$M'$ equivalent to~$M$ may be written as~$D_1 P_1 M P_2 D_2$, where~$P_1$ and~$P_2$ are permutation matrices, and~$D_1$ and~$D_2$ are diagonal matrices of unit real or complex numbers. The lemma is immediate for real matrices, as the number of matrices~$D_1$, $D_2$, $P_1$, and~$P_2$ as above is finite. We prove the lemma for complex matrices. 

Let~$M'' \eqdef P_1 M P_2$. There are a finite number of such matrices~$M''$ obtained by permuting row and columns of~$M$. Suppose~$M'' = (x_{ij} : 0 \le i,j < n)$. Then normalizing~$M''$ gives us the matrix~$N'' \eqdef ( x_{i0}^* x_{ij} x_{0j}^* : 0 \le i,j < n)$.

Suppose~$D_1 = \diag(a)$ and~$D_2 = \diag(b)$. Then~$M' = ( a_i x_{ij} b_j : 0 \le i,j < n)$, and normalizing~$M'$ gives us the matrix~$N' \eqdef ( b_0^* x_{i0}^* x_{ij} x_{0j}^* a_0^* : 0 \le i,j < n )$. Since the first row of~$N'$ is all~$1$, we have~$ b_0^* x_{00}^* a_0^* = 1$, i.e., $b_0^* = a_0 x_{00}$. So~$N' = ( a_0 x_{00} x_{i0}^* x_{ij} x_{0j}^* a_0^* : 0 \le i,j < n )$. As complex multiplication is commutative, the entries of~$N'$ simplify:~$ N' = ( x_{00} x_{i0}^* x_{ij} x_{0j}^* : 0 \le i,j < n ) = x_{00} N''$. Since the number of distinct matrices~$M''$ (and therefore~$N''$) is finite and~$x_{00}$ is an entry of~$M''$, the number of matrices~$N'$ is also finite.
\end{proof}
We now show that uncountably infinite families of normalized QHM of order~$n$ exist whenever there is a normalized QHM of order~$n$ with a nonreal entry.
We begin with a characterization of automorphisms of the quaternions.
Any such automorphism corresponds to an orthogonal transformation (more specifically, a rotation)
in the real vector space spanned by the basic unit quaternions~$\set{\qi, \qj, \qk}$. 
\begin{theorem}[Theorem~2.4.4, page~17, \cite{Rodman14-quaternion-linear-algebra}]
\label{thm-automorphism}
The group of automorphisms of the quaternions is isomorphic to~$\so(3)$, the Special Orthogonal Group in three dimensions (i.e., rotations of the two-sphere).
\end{theorem}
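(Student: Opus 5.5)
The statement to prove is that $\mathrm{Aut}(\HH)\cong\so(3)$. By \Cref{prop-inner-aut}, every automorphism has the form $f_a(x)=a^*xa$ for a unit quaternion $a$. The plan is to build the map $\qsphere^3\to\mathrm{Aut}(\HH)$, $a\mapsto f_a$, show that each $f_a$ acts as a rotation on the imaginary subspace $V\eqdef\Span_\R\{\qi,\qj,\qk\}$, and then prove the resulting map into $\so(3)$ is a bijective homomorphism onto $\so(3)$.

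First, the restriction is well defined. Since $f_a$ fixes $\R$ and commutes with conjugation, $\Re(f_a(x))=\Re(x)$, so $f_a$ preserves $V$. Since $\norm{a^*xa}=\norm{x}$, it also preserves the Euclidean norm on $V\cong\R^3$. Hence $f_a|_V\in O(3)$.

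Conversely, $f$ is determined by $f|_V$, because $f$ is $\R$-linear and fixes $1$. So $f\mapsto f|_V$ is an injective group homomorphism $\mathrm{Aut}(\HH)\to O(3)$.

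The image lies in $\so(3)$. An automorphism must preserve the product $\qi\qj=\qk$. For pure imaginary $u,v$ we have $uv=-u\cdot v+u\times v$, so an automorphism preserves the cross product. An orthogonal map preserving the cross product has determinant $+1$. Alternatively, one can argue by connectivity: $\qsphere^3$ is connected, $a\mapsto\det(f_a|_V)$ is continuous, and $f_1=\mathrm{id}$.

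Surjectivity onto $\so(3)$ is the main step. Take a rotation $R$ with unit axis $w\in\qsphere^2$ and angle $\theta$. Set $a\eqdef\e^{-w\theta/2}=\cos(\theta/2)-w\sin(\theta/2)$, using the quaternion Euler formula. Then $f_a$ fixes $w$, because $a$ commutes with $w$. For $u\in V$ orthogonal to $w$, we have $wu=-uw$, hence $ua=\bar a u$ where $\bar a=\cos(\theta/2)+w\sin(\theta/2)$. This gives
\[
a^*ua=\bar a\,\bar a\,u=\e^{w\theta}u=\cos\theta\,u+\sin\theta\,(w\times u),
\]
which is rotation by $\theta$ about $w$. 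The sign convention may need to be flipped to $a=\e^{w\theta/2}$ depending on orientation. Either way, every rotation arises as some $f_a$.

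Since every element of $\so(3)$ is a rotation about some axis, the image is all of $\so(3)$. Together with injectivity, $\mathrm{Aut}(\HH)\cong\so(3)$.

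I expect the main obstacle to be surjectivity, specifically getting the half-angle formula and its orientation right. The determinant-$+1$ argument is the other point needing care, and I would handle it through cross-product preservation.
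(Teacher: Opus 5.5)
The paper gives no proof of this statement; it only cites Rodman, so there is no internal argument to compare with. Your proof is correct. The restriction map is an injective homomorphism into $\mathsf{O}(3)$, and the identity $uv=-u\cdot v+u\times v$ correctly forces determinant $+1$. Your half-angle computation is also already correctly oriented: with $a=\e^{-w\theta/2}$ you get $a^*ua=\cos\theta\,u+\sin\theta\,(w\times u)$, the right-handed rotation by $\theta$ about $w$, so no sign flip is needed. You should add the one-line check that each $f_a$ really is an automorphism: $f_a(xy)=a^*xa\,a^*ya=f_a(x)f_a(y)$, with inverse $f_{a^*}$.

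The one thing I would change is your dependence on \Cref{prop-inner-aut}. In the cited source that result comes after Theorem~2.4.4 (page~19 versus page~17) and is naturally derived from it, so using it here risks circularity. You only use it to get that an automorphism $f$ is $\R$-linear, fixes $\R$, and maps $V$ isometrically onto itself, and all of this is direct:
\begin{itemize}
\item An $\R$-algebra automorphism is $\R$-linear with $f(1)=1$, so it fixes $\R$.
\item A quaternion $x$ lies in $V$ iff $x^2$ is a nonpositive real, since $(\alpha+v)^2=\alpha^2-\norm{v}^2+2\alpha v$. Hence $f(V)=V$.
\item For $u\in V$, $f(u)^2=f(u^2)=-\norm{u}^2$ gives $\norm{f(u)}=\norm{u}$.
\end{itemize}
With this replacement your argument is self-contained. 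Moreover, since restriction is injective and every rotation is some $f_a|_V$, it proves \Cref{prop-inner-aut} as a corollary, with $a\in\qsphere^3$.

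You were right to quote that proposition with a general unit quaternion. The paper states it with $a\in\qsphere^2$, which is too strong. For $a\in\qsphere^2$ we have $a^2=-1$, so $f_a\circ f_a=\mathrm{id}$; equivalently, your formula with $\theta=\pm\pi$ shows $f_a$ is only a half-turn. By contrast, $x\mapsto qxq^*$ sends $\qi\mapsto\qj\mapsto\qk\mapsto\qi$ and has order $3$, so it is not of that form.
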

This implies that for any pair of pure imaginary quaternions~$a,b$ of the same norm, there is an automorphism that maps~$a$ to~$b$. This property helps show the following.
\begin{proposition}
\label{prop-infinite-family}
Suppose~$G$ is a normalized QHM of order~$n$ with at least one non-real entry. There are an uncountably infinite number of normalized QHM of order~$n$ that are equivalent to~$G$.
\end{proposition}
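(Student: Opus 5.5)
The plan is to use the automorphisms of $\HH$ to produce the family. By \Cref{cor:automorphism-equiv}, for every automorphism $f$ of $\HH$ the matrix $f(G)$, obtained by applying $f$ entrywise, is a QHM equivalent to~$G$. Since $f(1)=1$, the matrix $f(G)$ is still normalized. So it suffices to show that the set $\{\, f(G) : f \text{ an automorphism of } \HH \,\}$ is uncountable.

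Fix a non-real entry $x = G_{ij}$ and write $x = \alpha + v$, where $\alpha = \Re(x)$ and $v$ is pure imaginary with $v \neq 0$. Every automorphism is $\R$-linear and fixes the reals; indeed, by \Cref{prop-inner-aut} it has the form $y \mapsto a^* y a$, and conjugation preserves real parts and norms. Hence $f(x) = \alpha + f(v)$, where $f(v)$ is pure imaginary with $\norm{f(v)} = \norm{v}$.

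By \Cref{thm-automorphism} and the remark after it, for every pure imaginary $w$ with $\norm{w} = \norm{v}$ there is an automorphism $f_w$ with $f_w(v) = w$. The set of such $w$ is a two-sphere of positive radius $\norm{v}$, so it is uncountable. Distinct choices of $w$ give distinct values $f_w(x) = \alpha + w$ in position $(i,j)$, and therefore distinct normalized matrices $f_w(G)$. This yields uncountably many normalized QHMs equivalent to~$G$.

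The only delicate point is justifying that an automorphism fixes real parts and preserves the norm of the imaginary part. This is immediate from the inner form $y \mapsto a^* y a$ with $a$ a unit quaternion, together with $\norm{a^* y a} = \norm{y}$. The transitivity statement on spheres of pure imaginary quaternions is supplied directly by the remark after \Cref{thm-automorphism}, which I will simply cite.
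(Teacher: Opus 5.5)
Your proof is correct and takes the same approach as the paper: both use \Cref{thm-automorphism}, \Cref{prop-inner-aut} and \Cref{cor:automorphism-equiv} to move the imaginary part of a fixed non-real entry around the sphere of radius $\norm{v}$. The paper argues by contradiction against a countable set of entries, while you exhibit the injection $w \mapsto f_w(G)$ directly, and you also spell out two facts the paper leaves implicit: that $f(G)$ stays normalized and that $f$ fixes real parts.
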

\begin{proof}
Let~$\alpha + w$ be an entry of~$G$ with real part~$\alpha$ and non-zero pure imaginary part~$w$.

Let~$S$ be the set of all normalized quaternionic Hadamard matrices that are equivalent to~$G$. Suppose for the sake of contradiction that~$S$ is countable. Let~$Q$ be the set of quaternions~$x$ such that~$x$ is an entry of some matrix in~$S$. The set~$Q$ is also countable. The set of pure imaginary quaternions with the same norm as~$w$ form a sphere of radius~$\norm{w}$ in~$ \Span\set{\qi, \qj, \qk}$. Thus there is a pure imaginary quaternion~$y$ with~$\norm{y} = \norm{w}$ that is different from the imaginary part of any quaternion in~$Q$. By \Cref{thm-automorphism}, there is an automorphism~$f$ of~$\HH$ that maps~$w$ to~$y$. By \Cref{prop-inner-aut}, there is a pure imaginary unit quaternion~$a$ such that~$f(x) = a^* x a$ for every~$x$. So~$f(G)$ is a QHM that is equivalent to~$G$. By construction, the matrix~$f(G)$ has an entry which is not contained in~$Q$, so~$f(G) \not\in S$. This is a contradiction, hence~$S$ is uncountably infinite.
\end{proof}
Thus the Hadamard matrices listed in \Cref{subsection:patterns} lead to infinite families of (equivalent) quaternionic Hadamard matrices. 
The appendix lists the perfect quaternion sequences corresponding to nonequivalent QT sequences of length up to~$21$.
The normalized QHMs derived from perfect sequences found by our computational search typically contain non-real entries, and hence also lead to infinite equivalence classes of QHMs.
As illustrated by the examples in \Cref{subsection:patterns}, the normalized matrices typically also have noncommuting entries.
Such QHMs lead to MUMs of interest in quantum communication theory~\cite{FKN23-mum-superdense-coding}.

\subsection{Quaternionic Hadamard matrices of order five}
\label{sec-characterization}

Chterental and {\DJ}okovi{\'c}~\cite{CD08-stochastic-matrices} identified all quaternionic Hadamard matrices of order four, characterizing them in terms of two parametrized families of QHMs.
They posed the characterization of quaternionic Hadamard matrices of order five as an open problem.
Higginbotham and Worley~\cite{HW22-qhm} make progress on this problem by studying a specific type of normalized order five QHMs with a circulant core,
where the \emph{core} of a normalized matrix is obtained by deleting its first row and column.

Since characterizing all~$5 \times 5$ quaternionic Hadamard matrices appears to be challenging, a natural approach to take is to study matrices with a fixed pattern of entries.
This approach appears to be feasible, and we illustrate it with an example.
Our analysis of this example will ultimately allow us to prove (see \Cref{sec-new-qhm})
that the infinite family of QHMs produced by the single (up to QT equivalence) quadruple of QT sequences of order five
is \emph{not} equivalent to the QHMs considered by Higginbotham and Worley~\cite{HW22-qhm}.

Consider the~$5 \times 5$ quaternionic Hadamard matrix
\begin{equation}
\label{eq-qhm5}
\begin{bmatrix}
1 &1  &1  &1  &1  \\
1 &-1  &c\qj+s\qi  &c\qj-s\qi  &\qj  \\
1 &c\qj+s\qi  &-1  &\qj  &c\qj-s\qi  \\
1 &c\qj-s\qi  &\qj  &-1  &c\qj+s\qi  \\
1 &\qj  &c\qj-s\qi  &c\qj+s\qi  &-1 
\end{bmatrix}
\end{equation}
with $c \eqdef \cos(\frac{2\pi}3)$ and~$s \eqdef \sin(\frac{2\pi}3)$. This corresponds to the~$5 \times 5$ Hadamard matrix of~$2 \times 2$ unitary operators reported by Tavakoli \textit{et al.\/}~\cite{TFR+21-mubs}, via the~$\reals$-linear extension of the anti-homorphism mapping
\begin{equation}
\label{eq:isometry}
1 \leftrightarrow \id, \quad \qi \leftrightarrow -\complexi \PauliX, \quad \qj \leftrightarrow -\complexi \PauliY, \quad \qk \leftrightarrow \complexi \PauliZ,
\end{equation}
where~$\id, \PauliX,\PauliY,\PauliZ$ are the~$2 \times 2$ Pauli operators
\[
\id \eqdef 
\begin{bmatrix}
    1 & 0 \\
    0 & 1
\end{bmatrix}, \ 
\PauliX \eqdef 
\begin{bmatrix}
    0 & 1 \\
    1 & 0
\end{bmatrix}, \ 
\PauliY \eqdef 
\begin{bmatrix}
    0 & -\complexi \\
    \complexi & 0
\end{bmatrix},
\text{ and } 
\PauliZ \eqdef 
\begin{bmatrix}
    1 & 0 \\
    0 & -1
\end{bmatrix},
\]
and~$\complexi \eqdef \sqrt{-1}$ is the complex square root of~$-1$ (see~\cite[Section~II.D]{FKN23-mum-superdense-coding} for the details).
A similar QHM was presented by Higginbotham and Worley~\cite[Example~3]{HW22-qhm} as an example of an order five QHM with a non-circulant core.
In fact, it follows from \Cref{thm-gabc-characterization} below that the QHM in~\cite[Example~3]{HW22-qhm} is equivalent to~\eqref{eq-qhm5}.

The matrix~\eqref{eq-qhm5} has the same pattern of entries as in the matrix
\begin{align}
\label{eq-form1}
G(a,b,c) & \eqdef
\begin{bmatrix}
1 &1  &1  &1  &1  \\
1 &-1  &a  &b  &c  \\
1 &a  &-1  &c  &b  \\
1 &b  &c  &-1  &a  \\
1 &c  &b  &a  &-1 
\end{bmatrix},
\end{align}
where~$a$, $b$, $c \in \qsphere^3$ are unit quaternions.
We would like to characterize~$a$, $b$, and~$c$ such that~$G(a,b,c)$ a Hadamard matrix.
We begin with a simple property of quaternions.

\begin{lemma} 
\label{lemma_sum}
Let $a$, $b\in \qsphere^3$ be unit quaternions such that
    \begin{equation}
        1+a+b \eq 0 . \label{eq:sum}
    \end{equation}
Then there is a quaternionic square root~$w$ of~$-1$, i.e., $w \in \qsphere^2$, such that $a = \e^{\frac{2\pi}{3} w}$ and~$b = \e^{\frac{4\pi}3 w}$.
In other words, $a$ and~$b$ are non-trivial quaternionic cube-roots of~$1$ and~$b = a^2$.
\end{lemma}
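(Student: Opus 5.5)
Our plan is to reduce everything to real parts and norms. From \eqref{eq:sum} we have $b = -1-a$. Taking norms gives $1 = \norm{b}^2 = (1+a)(1+a)^* = 1 + a + a^* + aa^* = 2 + 2\Re(a)$. Hence $\Re(a) = -\tfrac12$, and by symmetry (using $a=-1-b$) also $\Re(b)=-\tfrac12$. Write $a = -\tfrac12 + v$ with $v$ pure imaginary. Since $\norm{a}=1$, we get $\norm{v}^2 = \tfrac34$, so $v = \tfrac{\sqrt3}{2} w$ for a unique pure imaginary unit quaternion $w$. By the characterization $\qsphere^2 = \{\, w\in\qsphere^3 : \Re(w)=0 \,\}$ stated in \Cref{sec:quat}, this $w$ lies in $\qsphere^2$.

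Next we apply the quaternionic Euler formula $\e^{wt} = \cos t + w\sin t$, valid for $w\in\qsphere^2$. With $t = 2\pi/3$ we have $\cos t = -\tfrac12$ and $\sin t = \tfrac{\sqrt3}{2}$, so $a = \e^{\frac{2\pi}{3}w}$. Then
\[ b = -1 - a = -\tfrac12 - \tfrac{\sqrt3}{2}w = \cos\tfrac{4\pi}{3} + w\sin\tfrac{4\pi}{3} = \e^{\frac{4\pi}{3}w}. \]

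For the final sentence of the lemma, everything lives in the commutative subalgebra $\R[w]\cong\C$, since $w^2=-1$. The usual identities $\e^{sw}\e^{tw}=\e^{(s+t)w}$ therefore hold. Equivalently, one computes directly $a^2 = \tfrac14 - \sqrt3\, w/2 + \tfrac34 w^2 = -\tfrac12 - \tfrac{\sqrt3}{2}w = b$. Similarly $a^3 = ab = \tfrac14 - \tfrac34 w^2 = 1$, so $a$ and $b$ are cube roots of $1$. They are nontrivial because their real parts are $-\tfrac12\ne1$.

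There is no real obstacle here. The only point needing care is the norm expansion $(1+a)(1+a)^*$ in the noncommutative setting. This is harmless because $a+a^* = 2\Re(a)$ is real, so the computation goes through exactly as in the complex case.
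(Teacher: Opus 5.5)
Your proof is correct and follows essentially the same route as the paper's: show $\Re(a)=-\tfrac12$, rescale the imaginary part to get $w\in\qsphere^2$, and apply the quaternionic Euler formula. The differences are cosmetic: you get $\Re(a)=-\tfrac12$ directly from $\norm{1+a}^2 = 2+2\Re(a)$ rather than by comparing coordinates and deducing $a_1^2=b_1^2$, and you explicitly verify $b=a^2$ and $a^3=1$, which the paper leaves implicit.
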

\begin{proof}
Let us write $a = a_1 + a_2\qi + a_3\qj + a_4\qk$ and $b = b_1 + b_2\qi + b_3\qj + b_4\qk$. 
Eq.~\eqref{eq:sum} is equivalent to
\begin{equation}
\label{eq:sum-eq} 
    \begin{aligned}
            a_1 + b_1 & = -1 && \text{and} \\
            a_i & = -b_i && \text{for $i \in \{2,3,4\}$}.
    \end{aligned} 
\end{equation}
Furthermore, since $a$ and $b$ are unit quaternions, we have $1 = \norm{a} = \norm{b}$, i.e., $1 = a_1^2 + a_2^2 + a_3^2 + a_4^2 = b_1^2 + b_2^2 + b_3^2 + b_4^2$. Since~$a_i^2 = b_i^2$ for~$i > 1$ by~\eqref{eq:sum-eq}, we have~$a_1^2 = b_1^2$.
Thus $a_1 = b_1 = -\frac12$ and $b = a^*$. Define~$w \eqdef \frac{2}{\sqrt{3}} (a_2\qi + a_3\qj + a_4\qk)$. 
We see that $\norm{w}^2 = \frac{4}{3} (a_2^2 + a_3^2 + a_4^2) =\frac{4}{3}(1 - a_1^2) = 1$. So~$w \in \qsphere^2$. Moreover, according to the Euler formula for quaternions, $\e^{\frac{2\pi}{3} w} = \cos(\frac{2\pi}{3}) + w\sin(\frac{2\pi}{3}) = a$. Similarly, $b = \e^{\frac{4\pi}{3}w}$.
\end{proof}
We may verify that the converse also holds.
That is, for any non-trivial quaternionic cube-root~$a$ of~$1$ ($a^3 = 1$ and~$a \neq 1$) we have~$1 + a + a^2 = 0$.
Using this property, we determine precisely when~$G(a,b,c)$ is a Hadamard matrix.
\begin{theorem}
\label{thm-gabc-characterization}
Consider the matrix\/~$G(a,b,c)$ defined in Eq.~\eqref{eq-form1}, where~$a$, $b$, $c$ are unit quaternions.
Suppose\/~$G(a,b,c)$ is a Hadamard matrix. Then up to an automorphism of\/~$\HH$, it equals
\begin{align}
\begin{bmatrix}
1 &1  &1  &1  &1  \\
1 &-1  &\qi  &\qi \e^{\frac{2\pi}{3} \qj}  & \qi \e^{\frac{4\pi}{3} \qj}  \\
1 &\qi  &-1  &\qi \e^{\frac{4\pi}{3} \qj}  &\qi \e^{\frac{2\pi}{3} \qj}  \\
1 &\qi \e^{\frac{2\pi}{3} \qj}  & \qi \e^{\frac{4\pi}{3} \qj}  &-1  &\qi  \\
1 &\qi \e^{\frac{4\pi}{3} \qj} &\qi \e^{\frac{2\pi}{3} \qj}  &\qi  &-1 
\end{bmatrix} ,
\label{eq-form2}
\end{align}
i.e., $G(a,b,c)$ is equivalent to the matrix~\eqref{eq-form2}.
\end{theorem}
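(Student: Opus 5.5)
The plan is to write out the row-orthogonality conditions of $GG^\adjoint = 5\id$ explicitly. These will force $a$, $b$, $c$ to be pure imaginary unit quaternions lying at mutual angles of $2\pi/3$ in a plane. An automorphism of $\HH$ (via \Cref{thm-automorphism}) will then move this configuration to the one in~\eqref{eq-form2}.

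\textbf{Step 1 (orthogonality equations).} Take the inner product of row~$0$ with row~$1$. This gives $1-1+a^*+b^*+c^*=0$, i.e.\ $a+b+c=0$; rows $2$, $3$, $4$ give the same equation because each is a permutation of $\{1,-1,a,b,c\}$. Next take rows $1,2$, rows $1,3$ and rows $1,4$. Using $x+x^* = 2\Re(x)$, these give
\[
1-2\Re(a)+2\Re(bc^*)=0,\quad 1-2\Re(b)+2\Re(ac^*)=0,\quad 1-2\Re(c)+2\Re(ab^*)=0 .
\]
By the symmetry of the pattern, the remaining pairs of rows give the same three equations. Throughout I will track the order of factors carefully, since multiplication is noncommutative.

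\textbf{Step 2 (pure imaginary, at $120^\circ$).} From $a+b=-c$ and $\norm{c}=1$ we get $2+2\Re(ab^*)=1$, so $\Re(ab^*)=-\tfrac12$. In the same way $\Re(ac^*)=\Re(bc^*)=-\tfrac12$. Substituting into Step~1 gives $\Re(a)=\Re(b)=\Re(c)=0$. Hence $a,b,c\in\qsphere^2$.

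Now left-multiply $a+b+c=0$ by $a^*$ to obtain $1+a^*b+a^*c=0$, where $a^*b$ and $a^*c$ are unit quaternions. By \Cref{lemma_sum} there is $w\in\qsphere^2$ with $a^*b=\e^{\frac{2\pi}{3}w}$ and $a^*c=\e^{\frac{4\pi}{3}w}$. Therefore $b=a\e^{\frac{2\pi}3 w}$ and $c=a\e^{\frac{4\pi}3 w}$.

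\textbf{Step 3 ($w$ is orthogonal to $a$).} Expanding, $b = \cos(\tfrac{2\pi}3)\,a+\sin(\tfrac{2\pi}3)\,aw$. For pure imaginary $a,w$, the real part $\Re(aw)$ equals minus the Euclidean inner product of $a$ and $w$. Since $\Re(b)=0$, this inner product vanishes, so $a\perp w$ in $\Span\{\qi,\qj,\qk\}$.

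\textbf{Step 4 (normalizing by an automorphism).} The pair $(a,w)$ is orthonormal. It extends to a positively oriented orthonormal frame $(a,w,aw)$, since for orthogonal pure imaginary units $aw$ is their cross product. There is therefore a rotation in $\so(3)$ sending $(a,w,aw)$ to $(\qi,\qj,\qk)$. By \Cref{thm-automorphism} this rotation is an automorphism $f$ of $\HH$, and $f$ fixes $1$. Hence $f(a)=\qi$, $f(b)=\qi\e^{\frac{2\pi}3\qj}$ and $f(c)=\qi\e^{\frac{4\pi}3\qj}$, so $f(G(a,b,c))$ is exactly~\eqref{eq-form2}. Equivalence with~\eqref{eq-form2} then follows from \Cref{cor:automorphism-equiv}.

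\textbf{Expected obstacle.} I expect the main difficulty to be bookkeeping in Steps~1 and~3. One must get the order of factors right in each inner product, and confirm that every row pair reduces to one of the three displayed equations. One must also check the orientation claim in Step~4, namely that $aw$ is indeed the third vector of a right-handed frame, so that an element of $\so(3)$ and not just of $\mathsf{O}(3)$ suffices.
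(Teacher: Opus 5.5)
Your proof is correct and follows essentially the same route as the paper. In both, $a+b+c=0$ together with \Cref{lemma_sum} gives $b=a\e^{\frac{2\pi}{3}w}$ and $c=a\e^{\frac{4\pi}{3}w}$, the remaining row conditions force $\Re(a)=0$ and $\Re(aw)=0$, and an automorphism sends $(a,w,aw)$ to $(\qi,\qj,\qk)$. The differences are cosmetic: you get $\Re(a)=\Re(b)=\Re(c)=0$ from $\norm{a+b}^2=2+2\Re(ab^*)$ rather than by substituting the exponential form, and the paper avoids your orientation worry by noting directly that $a$, $w$, $aw$ are anticommuting square roots of $-1$ satisfying the relations of $\qi$, $\qj$, $\qk$.
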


\begin{proof}
For ease of notation, we abbreviate~$G(a,b,c)$ by~$M$.
Suppose~$M$ is a Hadamard matrix. Let~$M_i$ denote the~$i$th row of~$M$ for $0\leq i<5$. The condition~$M M^\adjoint = 5 \,\id_5$ is equivalent to the row-orthogonality conditions~$M_i M_j^\adjoint = 5 \diracdelta_{ij}$, for all~$0\leq i,j < 5$.

From~$M_2 M_1^\adjoint = 0$, we get $a + b + c = 0$, i.e.,~$a( 1 + a^* b + a^* c) = 0$. According to \Cref{lemma_sum}, there is a quaternionic square-root of~$-1$, $ w \in \qsphere^2$, such that $b = a\e^{\frac {2\pi}{3} w}$ and $c = a\e^{\frac {4\pi}{3} w}$. The orthogonality of rows~$M_2$ and~$M_3$ along with the relationship between $a$, $b$, and~$c$ gives
\[
\begin{array}{cll}
      & 1 - a^*-a+bc^*+cb^* & = 0\\
 \iff & 1 - 2\Re(a)+  a \e^{\frac{-2 \pi}{3} w} a^* + a \e^{\frac{2 \pi}{3} w} a^* & = 0 \\
 \iff & 1 - 2\Re(a)+  a\bigl(2 \cos(\frac {2\pi} 3) \bigr) a^* & = 0 \\
 \iff & 1 - 2\Re(a) - \norm{a}^2 & = 0 \\
 \iff & \Re(a) & = 0 .
\end{array}
\]
Therefore, $a \in \qsphere^2$.

From $M_2 M_4^\adjoint = 0$, which is the same as $M_3 M_5^\adjoint = 0$, we get
\[
\begin{array}{cll}
      & 1 - b^*-ac^*-b+ca^* & = 0\\
 \iff & 1 - 2 \Re(b) + a\e^{\frac{-4\pi}{3} w} a^* + a\e^{\frac{4 \pi}{3} w} a^* & = 0 \\
 \iff & 1 - 2 \Re(b) - aa^* & = 0 \\
 \iff &  \Re( a \e^{\frac{2 \pi}{3} w}) & = 0 \\
 \iff & \Re\bigl( a \bigl( - \frac{1}{2} + \frac{\sqrt{3}}{2} w \bigr) \bigr)  & = 0 \\
 \iff & \Re( aw ) & = 0 ,  \\
\end{array}
\]
since~$\Re(a) = 0$.

Define~$x \eqdef aw$. The above derivation shows that~$x \in \qsphere^2$, so~$x^* = -x = -aw$. Since~$a$, $w \in \qsphere^2$, we also have~$-aw = x^* = w^* a^* = (-w) (-a) = wa$. That is,~$a$ and~$w$ are pure imaginary unit quaternions that anticommute. 
In other words, $a$, $w$, and~$x$ satisfy the same relations as~$\qi$, $\qj$, and~$\qk$, and there is an automorphism~$f$ of~$\HH$ given by the mapping
\[
a \leftrightarrow \qi, \quad w \leftrightarrow \qj, \quad \text{and} \quad x \leftrightarrow \qk .
\]
Up to the automorphism~$f$, the matrix~$M$ equals~\eqref{eq-form2}.
We may verify that the matrix~\eqref{eq-form2} satisfies all the remaining row orthogonality conditions, and is therefore a Hadamard matrix.
\end{proof}

Thus, all quaternionic Hadamard matrices of the form~$G(a,b,c)$ are generated from the matrix~$G(\qi, \qi \e^{\frac{2\pi}{3} \qj}, \qi \e^{\frac{4\pi}{3} \qj})$ by automorphisms of the quaternions.
By \Cref{cor:automorphism-equiv}, automorphisms of the quaternions preserve QHM equivalence, so all QHMs of the form $G(a,b,c)$ are equivalent.
Since $G(a,b,c)$ is equivalent to the QHM in~\cite[Example~3]{HW22-qhm} (by swapping
the second row with the second-last row and the middle row with the last row),
the family of QHMs of order five with non-circulant core considered by
Higginbotham and Worley~\cite{HW22-qhm} are also equivalent to QHMs of the form $G(a,b,c)$.
This kind of analysis may be a good starting point for a complete characterization of order five QHMs.

\subsection{Nonequivalent QHMs of small order}
\label{sec-new-qhm}

The QHMs constructed from perfect sequences can lead to matrices not equivalent to those obtained from other constructions.
We now prove that the QHMs constructed from the length five perfect sequence \verb|x+JJ+|
are not equivalent to matrices of the form~\eqref{eq-form1} or to the other order
five QHMs considered by Higginbotham and Worley~\cite{HW22-qhm}.

Consider the length five perfect sequence~\verb|x+JJ+| listed in the appendix and the QHM obtained from this sequence. For ease of comparison, we work with the equivalent circulant matrix
\begin{equation}
\label{eq-m1}
M \eqdef
\begin{bmatrix}
1 & q \qi & 1 & -\qj & -\qj \\
-\qj & 1 & q \qi & 1 & -\qj \\
-\qj & -\qj & 1 & q \qi & 1 \\
1 & -\qj & -\qj & 1 & q \qi \\
q \qi & 1 & -\qj & -\qj & 1 
\end{bmatrix} .
\end{equation}
having~\verb|+x+JJ| as the first row.
We begin by showing that $M$ is not equivalent to the order five matrices of the form~$G(a,b,c)$ studied in \Cref{sec-characterization}. In what follows, we assume the indices are taken to be modulo~$n$, where~$n$ is the length of the perfect sequence or order of the corresponding QHM\@.

\begin{lemma}
\label{lem-eq-cyclic-qhm}
Let\/~$H$ be a cyclic QHM of order\/~$n$ with first row\/~$h$. Let\/~$\tH$ denote the QHM obtained by dephasing the first column of\/~$H$, i.e., $\tH \eqdef \diag(c)^\adjoint H$, where\/~$c$ is the first column of\/~$H$. Any normalized QHM equivalent to\/~$H$ may be expressed as\/~$\alpha h_k Q_1 \tH \diag(g(k))^\adjoint Q_2 \alpha^*$, for some unit quaternion\/~$\alpha$, $k \in [0,n)$, and permutation matrices\/~$Q_1$, $Q_2$, where\/~$g(k)$ is the vector given by the cyclic permutation of the entries of\/~$h$ by\/~$k$ places to the left. 
\end{lemma}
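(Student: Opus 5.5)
The plan is to compute explicitly what normalization does to an arbitrary matrix equivalent to $H$. Then I would use the circulant structure of $H$ to rewrite the result in the stated form, keeping careful track of the order of the (noncommuting) factors.

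\emph{Normalizing a general equivalent matrix.} By \Cref{lem-equivalence}, any matrix equivalent to $H$ is $D_1 P_1 H P_2 D_2$ with $D_1=\diag(a)$ and $D_2=\diag(b)$ of unit quaternions. Put $M''\eqdef P_1HP_2=(x_{ij})$, so the matrix is $(a_i x_{ij} b_j)$. As in the proof of \Cref{lem-finite-equiv-class}, I would dephase step by step without commuting anything:
\begin{itemize}
\item multiplying row $i$ on the left by $(a_ix_{i0}b_0)^*$ gives entries $b_0^*x_{i0}^*x_{ij}b_j$;
\item multiplying column $j$ on the right by the conjugate of the new first-row entry, namely $b_j^*x_{0j}^*x_{00}b_0$, gives the normalized matrix
\[
N=\bigl(b_0^*\,x_{i0}^*x_{ij}x_{0j}^*\,x_{00}\,b_0 : 0\le i,j<n\bigr).
\]
\end{itemize}
Every normalized matrix equivalent to $H$ arises this way. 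This is because normalization is a particular equivalence operation, and normalizing an already normalized matrix leaves it unchanged. The outer conjugation by $\alpha\eqdef b_0^*$ accounts for the factors $\alpha\cdots\alpha^*$ in the statement. The remaining task is to identify $x_{i0}^*x_{ij}x_{0j}^*x_{00}$ with $h_k$ times an entry of $Q_1\tH\diag(g(k))^*Q_2$.

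\emph{Using circulance.} Write $x_{ij}=H_{\pi(i)\sigma(j)}=h_{\sigma(j)-\pi(i)}$ for permutations $\pi,\sigma$ of $[0,n)$. A simultaneous cyclic shift of all rows and columns leaves $H$ unchanged. Composing $\pi$ and $\sigma$ with the same shift therefore changes nothing, so I may assume $\pi(0)=0$. Let $k\eqdef\sigma(0)$, so $x_{00}=h_k$. Then:
\begin{itemize}
\item $x_{i0}^*=h_{k-\pi(i)}^*$ is the conjugate of an entry of column $k$ of $H$;
\item $x_{0j}^*=h_{\sigma(j)}^*$ is the conjugate of an entry of the first row $h$.
\end{itemize}
So $N$ has the shape $\alpha\,(\text{column-}k\text{ dephased }H)\,\diag(\text{row }0)^*\,h_k\,\alpha^*$, after permuting rows and columns.

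\emph{Matching the stated form.} I would shift indices once more, relabelling columns by $j\mapsto j-k$, which is absorbed into $Q_2$. This turns column $k$ into column $0$, so the column factor becomes $c^*$ with $c$ the first column of $H$, giving $\tH$. It turns row $0$ into the $k$-fold left cyclic shift $g(k)$. Row and column reorderings are absorbed into $Q_1,Q_2$. Finally, $h_k$ must be moved to the left of the core. I would do this by redefining $\alpha$, replacing $b_0^*$ by a suitable unit quaternion. The identity
\[
b_0^*\,Y h_k\, b_0=(b_0^*h_k)\,(h_k^*Yh_k)\,(b_0^*h_k)^*\cdot\text{(adjusted)}
\]
suggests that some conjugation is unavoidable here. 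So I expect to have to choose $\alpha$ together with the pivot $k$, possibly replacing $k$ by $-k$ or using the row pivot instead of the column pivot, so that the leftover factor is exactly $h_k$ on the left.

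\emph{Main obstacle.} The hard part is this last bookkeeping step: arranging the noncommuting factors $x_{00}$, $x_{i0}^*$, $x_{0j}^*$ so that $h_k$ sits on the left of $Q_1\tH\diag(g(k))^*Q_2$, with the index conventions for $g(k)$ and $k\in[0,n)$ exactly as stated. I would first check small cases, such as $n=2$ or the matrix~\eqref{eq-m1}, to fix the correct choice of pivot convention before writing the general argument.
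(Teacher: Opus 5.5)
You follow essentially the paper's route: normalize a general equivalent matrix $D_1P_1HP_2D_2$, then use circulance to recognize $\tH\diag(g(k))^\adjoint$. The paper solves the normalization equations $a_rh_{j-r}b_j=1=a_ih_{s-i}b_s$ directly rather than dephasing in two steps, but it reaches the same expression. Everything you derive up to
\[
N=b_0^*\,Y\,h_k\,b_0,\qquad Y_{ij}=h_{k-\pi(i)}^*h_{\sigma(j)-\pi(i)}h_{\sigma(j)}^*,
\]
is correct. However, the proposal stops before the final step, and the obstacle you flag there is misdiagnosed. No conjugation of the core by $h_k$ is needed. There is also no reason to replace $k$ by $-k$ or switch pivots; doing so would in general break the match between the prefactor $h_k$ and $g(k)$.

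Simply set $\alpha\eqdef b_0^*h_k^*$, which is a unit quaternion. Then $\alpha h_k=b_0^*$ and $\alpha^*=h_kb_0$, so entrywise $b_0^*\,Y\,h_k\,b_0=\alpha\,h_k\,Y\,\alpha^*$, with the same $k=\sigma(0)$. This is exactly the paper's $\alpha=a_r$: normalization forces $a_rh_{s-r}b_s=1$, so $a_r=b_s^*h_{s-r}^*$.

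In the identification $Y=Q_1\tH\diag(g(k))^\adjoint Q_2$, you must shift \emph{both} the row and the column indices by $-k$, not only the columns. Put $u=\pi(i)-k$ and $v=\sigma(j)-k$. Using $c_u=h_{-u}$ and $g(k)_v=h_{v+k}$, this gives $Y_{ij}=h_{-u}^*h_{v-u}h_{v+k}^*=\bigl(\tH\diag(g(k))^\adjoint\bigr)_{uv}$. The maps $i\mapsto u$ and $j\mapsto v$ define the permutations $Q_1$ and $Q_2$. With these two points filled in, your argument is a complete proof.
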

\begin{proof}
Let~$h$ denote the first row of~$H$, so that~$g(k)_i = h_{i+k}$, $H = ( h_{j-i} : 0 \le i,j < n)$ and~$\tH = ( h_{-i}^* h_{j-i} : 0 \le i,j < n)$. 

Suppose~$H'$ is a normalized QHM equivalent to~$H$. By \Cref{lem-equivalence}, we have~$H' = D_1 P_1 H P_2 D_2$ for some diagonal matrices~$D_1$, $D_2$ with unit quaternions on the diagonals and some permutation matrices~$P_1$, $P_2$. We may write~$H' = P_1 D_1' H D_2' P_2$, where~$D_1' \eqdef P_1^\adjoint D_1 P_1$ and~$D_2' \eqdef P_2 D_2 P_2^\adjoint$. Note that~$D_1'$, $D_2'$ are also diagonal matrices with unit quaternions on the diagonals. 

Let~$K \eqdef D_1' H D_2'$. It suffices to prove that~$K$ is of the form stated in the lemma, i.e., $K = \alpha h_k Q_1' \tH \diag(g(k))^\adjoint Q_2' \alpha^* $ for some permutation matrices~$Q_1'$, $Q_2'$ and~$\alpha$, $k$, $g(k)$ as in the lemma.

Let~$D_1' = \diag(a)$, and~$D_2' = \diag(b)$ so that~$K = (a_i h_{j-i} b_j : 0 \le i,j < n)$. Suppose~$P_1$ maps the~$(r+1)$th row of~$K$ to the first row of~$H'$, and~$P_2$ maps the~$(s+1)$th column of~$K$ to the first column of~$H'$. Since~$H'$ is normalized, all entries of the~$(r+1)$th row of~$H'$ and its~$(s+1)$th column are~$1$. So~$a_r h_{j-r} b_j = 1 = a_i h_{s-i} b_s$ for all~$0 \le i,j < n$. Solving these equations, we get~$b_j = h_{j-r}^* a_r^*$ and~$a_i = b_s^* h_{s-i}^* = a_r h_{s-r} h_{s-i}^* \mspace{0.7mu}$. So
\[
K = (a_r h_{s-r} h_{s-i}^* h_{j-i} h_{j-r}^* a_r^* : 0 \le i,j < n) = a_r h_{s-r} K' a_r^* \enspace,
\]
where~$K' = (h_{s-i}^* h_{j-i} h_{j-r}^* : 0 \le i,j < n)$. Note that the~$(i,j)$th entry of~$K'$ is the~$(i-s, j-s)$th entry of~$\tH \diag(g(k))^\adjoint$, with~$k \eqdef s-r$. The lemma follows.
\end{proof}

The lemma helps us prove that the matrix~$M$ is not equivalent to the QHM~$G(\qi, \qi \e^{\frac{2\pi}{3} \qj}, \qi \e^{\frac{4\pi}{3} \qj})$, and therefore to any QHM of the form~$G(a,b,c)$.
\begin{proposition}
\label{prop-noneq-gabc}
The QHM\/~$M$ in Eq.~\eqref{eq-m1} is not equivalent to the QHM\/ $G(\qi, \qi \e^{\frac{2\pi}{3} \qj}, \qi \e^{\frac{4\pi}{3} \qj})$ as defined in \Cref{sec-characterization}.
\end{proposition}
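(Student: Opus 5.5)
The plan is to reduce the question to a finite computation with \Cref{lem-eq-cyclic-qhm}, using an invariant that survives all the freedom the lemma leaves. Suppose, for contradiction, that $G \eqdef G(\qi, \qi \e^{\frac{2\pi}{3} \qj}, \qi \e^{\frac{4\pi}{3} \qj})$ is equivalent to $M$. Since $G$ is normalized, \Cref{lem-eq-cyclic-qhm} (with $H=M$ and first row $h=[1,q\qi,1,-\qj,-\qj]$) gives
\[
G = \alpha\, Q_1 N_k Q_2\, \alpha^*, \qquad N_k \eqdef h_k \tH \diag(g(k))^\adjoint ,
\]
for some unit quaternion $\alpha$, some $k\in[0,5)$, and some permutation matrices $Q_1$, $Q_2$. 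The real part is invariant under conjugation, $\Re(\alpha x\alpha^*)=\Re(x)$, and permutations only rearrange entries. Hence the multiset of real parts of the entries of $G$ must equal the multiset of real parts of the entries of $N_k$ for some $k$.

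Next I would compute this invariant for $G$. The first row and first column contribute nine entries equal to $1$, and there are four diagonal entries equal to $-1$. The remaining twelve entries are $\qi$, $\qi\e^{\frac{2\pi}{3}\qj}=\cos(\tfrac{2\pi}{3})\qi+\sin(\tfrac{2\pi}{3})\qk$, and $\qi\e^{\frac{4\pi}{3}\qj}$, all of which have real part $0$. So the real-part profile of $G$ is
\[
\{\,1^{(9)},\,(-1)^{(4)},\,0^{(12)}\,\}.
\]
In particular, exactly four entries have real part $-1$, i.e.\ equal $-1$, and sixteen entries are purely imaginary.

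The main step is to compute the five candidate matrices $N_0,\dots,N_4$ explicitly. First form $\tH=(h_{-i}^* h_{j-i})$, whose entries are products of two elements from $\{1, q\qi, -\qj\}$ and their conjugates. Then left-multiply by $h_k$ and right-multiply column $j$ by $h_{j+k}^*$. Every entry is a product of at most four quaternions from $\Q_8\cup q\Q_8\cup q^*\Q_8$, so it lies in $\Q_{24}$. Its real part is therefore one of $0$, $\pm1$, $\pm\frac12$.

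I expect this arithmetic to be the main obstacle: it requires tracking noncommutative products such as $(q\qi)^*\qj(q\qi)$ correctly. To keep it manageable, I would use only the coarse invariant ``number of entries with real part $\pm\tfrac12$''. For $G$ this count is $0$, so it suffices to exhibit, for each $k$, one entry of $N_k$ with real part $\pm\frac12$. A natural candidate is an entry involving a single factor $q\qi$ multiplied by elements of $\Q_8$: such a product lies in $q\Q_8$, and every element of $q\Q_8$ has real part $\pm\frac12$. A row of $\tH$ where $q\qi$ appears with no compensating $q^*$ factor should work uniformly in $k$. If some $N_k$ happens to avoid half-integer real parts, I would fall back on comparing the exact counts of entries equal to $-1$ (four in $G$), or on the count of purely imaginary entries (sixteen in $G$). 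Either mismatch contradicts the equivalence, which proves the proposition.
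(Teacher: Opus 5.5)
Your argument is sound and takes a genuinely different route from the paper's. The paper uses a \emph{column-wise} invariant. Every non-first column of $G$ contains exactly three pure imaginary entries. The paper tabulates the whole third column of $L=h_k\tH\diag(g(k))^\adjoint$ for each $k\in[0,5)$ and checks two things: the column is never all $1$, so $Q_2$ cannot send it to the first column, and it never has exactly three pure imaginary entries. You use a \emph{global, value-based} invariant instead. Since $\Re(\alpha x\alpha^*)=\Re(x)$ and $Q_1,Q_2$ only rearrange entries, it suffices that every $N_k$ has an entry with real part $\pm\frac12$, a value that never occurs among the entries of $G$ (whose real parts lie in $\{0,\pm1\}$). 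This removes the bookkeeping about which column is moved to the first position and needs only one entry per $k$. The paper's column table, by contrast, carries finer column-level information (conjugate pairs, commutation of entries), which it reuses in the next proposition.

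What your proposal does not yet do is verify that decisive step, and your heuristic is not uniform in $k$. For example, the $(0,1)$ entry of $N_k$ is $h_k\,q\qi\,h_{1+k}^*$, which equals $1$ when $k=0$ (row $0$ of $N_0$ is all ones). A short index count closes this. The $(i,j)$ entry of $N_k$ is $h_kh_{-i}^*h_{j-i}h_{j+k}^*$, and $h_1=q\qi$ is the only entry of $h=[1,q\qi,1,-\qj,-\qj]$ outside $\Q_8$. Suppose exactly one of $k$, $-i$, $j-i$, $j+k$ is $\equiv1\pmod 5$. Then by $\Re(xy)=\Re(yx)$ the entry has real part $\Re(qu)$ or $\Re(q^*u)$ for some $u\in\Q_8$, which is $\pm\frac12$. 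For $k\neq1$ take $i=4$ and any $j\notin\{0,1-k\}$. For $k=1$ take $(i,j)=(0,2)$, which gives $q\qi\qj=q\qk$. With this the proof is complete and your fallback is never needed. That is just as well, because its count is wrong: $G$ has twelve purely imaginary entries, not sixteen, as your own profile $\{1^{(9)},(-1)^{(4)},0^{(12)}\}$ shows.
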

\begin{proof}
Observe that every column of~$G(\qi, \qi \e^{\frac{2\pi}{3} \qj}, \qi \e^{\frac{4\pi}{3} \qj}) $ other than the first contains exactly three pure imaginary quaternions. We argue that for any normalized QHM equivalent to~$M$, there is some column different from the first that does not have this property.

By \Cref{lem-eq-cyclic-qhm}, any normalized matrix equivalent to~$M$ has the form
\[
\alpha h_k Q_1 \tH \diag(g(k))^\adjoint Q_2 \alpha^*
\]
for~$\alpha$, $h$, $k$, $Q_1$, $\tH$, $g(k)$, and~$Q_2$ as in the lemma. Every inner automorphism~$x \mapsto \alpha x \alpha^*$ preserves pure imaginary quaternions, and the permutation of rows of a matrix preserves the number of pure imaginary quaternions in each of the columns. The permutation~$Q_2$ maps some column of the matrix~$L \eqdef h_k \tH \diag(g(k))^\adjoint $ to the first and permutes the rest. So it suffices to prove that there is some other column of~$L$ that does not contain exactly three pure imaginary quaternions.

Consider the third column of~$L$.
Depending on the value of~$k$, this column is given in the table
\begin{equation}
\label{eq-3rd-column}
\begin{array}{c|cccccccccc}
k & ~~ & 0 & ~~ & 1 & ~~ & 2 & ~~ & 3 & ~~ & 4 \\ \hline
h_k h_0^* h_2 h_{2 + k}^* && 1           && q \qk     && \qj       && -\qj        && -\qk q^* \\
h_k h_4^* h_1 h_{2 + k}^* && \qj q \qi   && (q \qk)^2 && \qj q \qk && q \qi       && 1 \\
h_k h_3^* h_0 h_{2 + k}^* && \qj         && - q \qi   && -1        && 1           && -\qi q^* \\
h_k h_2^* h_4 h_{2 + k}^* && -\qj        && q \qi     && 1         && -1          && \qi q^* \\
h_k h_1^* h_3 h_{2 + k}^* && \qi q^* \qj && 1         && -\qi q^*  && \qk q^* \qj && (\qk q^*)^2
\end{array}.
\end{equation}
Since none of the possible columns is all~$1$, the permutation~$Q_2$ does not map the third column of~$L$ to the first. We may verify that none of the above column vectors have exactly three pure imaginary quaternions. The claim follows.
\end{proof}

Recall that the order~$n$ complex Fourier matrix~$F_n$ is given by~$F_n(i,j) \eqdef \upomega^{ij}$, $0 \le i,j < n$, where~$\upomega \eqdef \exp(2\pi \qi/n)$. The matrix~$F_n$ is a quaternionic Hadamard matrix for all~$n \ge 1$. The complex Fourier matrix~$F_5$ has a circulant core and is contained in a class of order five QHMs with a circulant core characterized by Higginbotham and Worley~\cite[Theorem~6]{HW22-qhm}. We now prove the matrix~$M$ is not equivalent to this class of QHMs.
\begin{proposition}
\label{prop-noneq-circulant}
The QHM\/~$M$ in Eq.~\eqref{eq-m1} is not equivalent to QHMs with a circulant core and with two complex entries in the core.
\end{proposition}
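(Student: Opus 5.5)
The plan is to imitate the proof of \Cref{prop-noneq-gabc}: find a property of the non-first columns that every normalized QHM with a circulant core and two complex core entries has, and that is preserved by the operations in \Cref{lem-eq-cyclic-qhm}. Then check, using the explicit form given by that lemma, that no normalized matrix equivalent to~$M$ has this property.

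First I fix the invariant. Let $N$ be a normalized order-five QHM whose core is circulant with defining sequence $(x_0,x_1,x_2,x_3)$, where two of the $x_i$ are complex, i.e.\ lie in $\R+\R\qi$. Each non-first column of $N$ contains the top entry $1$ and each $x_i$ exactly once. So every non-first column has at least three entries in the commutative subalgebra $\R+\R\qi$: the $1$ and the two complex $x_i$. Call a column \emph{good} if it has at least three entries lying in a common subalgebra $\R+\R u$ with $u\in\qsphere^2$. Since $\qsphere^2$ is the set of pure imaginary unit quaternions, every real number lies in every such subalgebra, so this is the same as requiring the non-real entries among those three to commute pairwise (equivalently, to have parallel imaginary parts). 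Goodness is preserved by permuting rows. It is also preserved by the inner automorphism $x\mapsto \alpha x\alpha^*$, because that map sends $\R+\R u$ to $\R+\R(\alpha u\alpha^*)$. So every non-first column of any matrix of the form $\alpha N\alpha^*$ with permuted rows and columns is good.

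Next I apply \Cref{lem-eq-cyclic-qhm} to $M$. Any normalized QHM equivalent to $M$ has the form $\alpha h_k Q_1 \tH \diag(g(k))^\adjoint Q_2 \alpha^*$. It therefore suffices to show, for each $k\in[0,5)$, that $L\eqdef h_k\tH\diag(g(k))^\adjoint$ has at least two non-good columns. Whichever column $Q_2$ moves to the first position, at least one non-good column then remains among the non-first columns, which gives the contradiction.

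For the computation, I write out all five columns of $L$ for each of the five values of $k$, as in table~\eqref{eq-3rd-column}, reducing the products of $q$, $\qi$, $\qj$, $\qk$ to the form $\alpha+\beta\qi+\gamma\qj+\delta\qk$ with $\alpha,\beta,\gamma,\delta\in\R$. For each column I count the real entries and group the non-real entries by the direction of their imaginary parts; the column is good exactly when the number of real entries plus the size of the largest group is at least three. One cannot simply reuse the third column: for $k=0$ it is $(1,\qj q\qi,\qj,-\qj,\qi q^*\qj)$, which is good because $1,\qj,-\qj\in\R+\R\qj$. So other columns must be used.

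The main obstacle is this $25$-column case check. My fear is that for some $k$ fewer than two columns fail. If that happens, I will strengthen the invariant using the circulant structure: the two complex entries of the core appear in each row and column in cyclically related positions. Alternatively, I will use the real-part multiset of each column, which is invariant under conjugation and row permutation. In $N$ this multiset is $\{1\}$ together with $\{\Re x_0,\dots,\Re x_3\}$, the same for every non-first column, so equal multisets across all non-first columns would be required. Comparing these multisets across the columns of $L$ should separate the two classes if goodness alone does not.
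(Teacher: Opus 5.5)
There is a genuine gap: the goodness invariant never fails on $L$, and your concrete fallback fails at $k=1$.

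\textbf{Goodness is too weak.} It does hold for circulant-core matrices, but it holds for every column of $L$ for every $k$. Any column with two real entries is automatically good, since the two reals together with any third entry lie in a common $\R+\R u$. Each remaining non-first column of $L$ contains $1$ together with two commuting non-real entries, either a pair $\pm x$ or a pair $x,x^2$. Checking all $20$ non-first columns ($k=0,\dots,4$) confirms that every one is good. The table \eqref{eq-3rd-column} already shows this for the third column: for $k=2,3$ it contains both $1$ and $-1$, and for $k=1$ it is $(q\qk,(q\qk)^2,-q\qi,q\qi,1)$. So your column check produces no contradiction at all.

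\textbf{The real-part fallback fails at $k=1$.} It does separate $k=0,2,3,4$. For $k=1$, apply the inner automorphism $x\mapsto h_1^*xh_1$ with $h_1=q\qi$, which preserves both of your invariants. The four non-first columns of $L$ become $((q\qi)^*,-q,q\qi,q,1)$, $(-q,q^2,-q\qi,q\qi,1)$, $(q\qi,-q\qi,q^2,-q,1)$ and $(q,q\qi,-q,(q\qi)^*,1)$. Each has real-part multiset $\{1,\tfrac12,-\tfrac12,-\tfrac12,-\tfrac12\}$, and each is good. So as written the argument does not close.

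\textbf{A repair within your framework.} Compare the full multiset of entries of each column, not just goodness or real parts. In a normalized matrix with circulant core, every non-first column contains exactly $\{1,x_0,x_1,x_2,x_3\}$. This is preserved by row permutations and by the uniform map $x\mapsto\alpha x\alpha^*$. Column $0$ is the only all-ones column of $L$, so $Q_2$ must send it to the first column, and columns $1$--$4$ of $L$ would need equal entry multisets. For $k=1$ the first column above contains $q$ and the second does not. For $k\neq1$ the real parts already differ.

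This repaired argument shows $M$ is inequivalent to every QHM with circulant core, without using the complex-entry hypothesis or external results. It also shows you never needed two bad columns, only one.

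\textbf{The paper's route.} The paper instead uses the dichotomy of Higginbotham and Worley's Theorem~6. Either the matrix is equivalent to $F_5$, all of whose entries commute, or every non-first column has two conjugate pairs $(\beta,\beta^*),(\gamma,\gamma^*)$ whose imaginary parts have inner product $\pm5/16$. Both cases are refuted using only the third column of $L$. Its imaginary parts are never all parallel, which rules out $F_5$. Only for $k=0$ does it contain two conjugate pairs, and there the inner product is $\pm\tfrac12$.
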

\begin{proof}
By~\cite[Theorem~6]{HW22-qhm}, any order five QHM with a circulant core and with two complex entries in the core is either equivalent to the complex Fourier matrix~$F_5 \mspace{0.7mu}$, or has two pairs of non-real conjugate quaternions~$(\beta, \beta^*)$, $(\gamma, \gamma^*)$ in every column other than the first. Moreover, in the latter case, the inner product of the vectors of coefficients of the pure imaginary parts of~$\beta$ and~$\gamma$ is~$\pm 5/16$.

Every inner automorphism~$x \mapsto \alpha x \alpha^*$ preserves the conjugacy relation. By \Cref{thm-automorphism}, it also preserves the inner products of the vectors of coefficients of the pure imaginary parts of every pair of quaternions. Similarly, two quaternions~$\theta$, $\omega$ commute if and only if~$\alpha \theta \alpha^*$ and~$\alpha \omega \alpha^*$ commute.
Also note that two quaternions commute if and only if their pure imaginary parts are real multiples of the same quaternion.

Consider the nonequivalence of~$M$ to the complex Fourier matrix~$F_5 \mspace{0.7mu}$. Note that all the entries of the complex Fourier matrix commute. By following the same arguments as in the proof of \Cref{prop-noneq-gabc}, it suffices to show that the pure imaginary parts of the third column of the QHM~$L$ are not all real multiples of the same quaternion. By inspecting the possibilities for this column in Eq.~\eqref{eq-3rd-column}, we see that this is indeed the case.

Consider the nonequivalence of~$M$ to the other class of order five QHM described above. Inspecting the possibilities for the third column of~$L$ in Eq.~\eqref{eq-3rd-column}, we see that only the possibility with~$k = 0$ has two pairs of non-real conjugate quaternions. However, the inner product of the vectors of coefficients of the pure imaginary parts of the non-conjugate quaternions is~$\pm 1/2$. The claim follows.
\end{proof}

Next, we show there are at least three nonequivalent QHMs of order seven. 
Consider the order seven QHM~$R$ obtained from the sequence~\verb|+JYJ+--| reported in the appendix (up to a shift), 
\begin{equation}
\label{eq-o7qhm}
R \eqdef
\begin{bmatrix}
 1 & -\qj & -q \qj & -\qj & 1 & -1 & -1 \\
-1 & 1 & -\qj & - q \qj & -\qj & 1 & -1 \\
-1 & -1 & 1 & -\qj & - q \qj & -\qj & 1 \\
 1 & -1 & -1 & 1 & -\qj & - q \qj & -\qj \\
-\qj & 1 & -1 & -1 & 1 & -\qj & - q \qj \\
- q \qj & -\qj & 1 & -1 & -1 & 1 & -\qj \\
-\qj & - q \qj & -\qj & 1 & -1 & -1 & 1 
\end{bmatrix} .
\end{equation}
We compare $R$ to the QHMs corresponding to $\Q_{24}$-sequences reported in prior works, viz.~\verb|+jIIj+W|~\cite[Example~7.3]{Kuznetsov2010} and~\verb|qJIKKIJ|~\cite{LS11-williamson-matrices} (see~\cite[Table~1]{BarreraAcevedo2024}).
The former sequence cannot be directly converted into a set of QT sequences because of the presence of the
\verb|W| entry, since $q^*\qk\notin\Q_+$.  The application of conjugation converts \verb|+jIIj+W| into a perfect $\Q_+$-sequence
whose corresponding set of QT sequences is QT equivalent to the set of QT sequences corresponding to \verb|qJIKKIJ|.
However, conjugation is not an automorphism of~$\HH$ (it is an \emph{anti}-automorphism, i.e., $(xy)^*=y^*x^*$), so conjugation does not in
general preserve the property of being a QHM and is not an equivalence operation of QHMs.
Thus, below we prove that $R$ is not equivalent
to both the normalized form of the QHM obtained from~\verb|+jIIj+W|,
\begin{equation}
\label{eq-o7qhm-k10}
S \eqdef
\begin{bmatrix}
1 & 1 & 1 & 1 & 1 & 1 & 1 \\
1 & -\qk q \qj & -\qk q \qk & \qk q & \qk q \qk & \qk q \qj & (\qk q)^2 \\
1 & -q^* \qi & \qi & -\qk & \qk & -\qi & \qi q \\
1 & -1 & \qj q^* \qj & \qk & -\qj & -\qk & q \\
1 & \qi & -1 & -\qi q^* \qj & -\qk & \qk & \qk q \\
1 & -\qj & \qj & -1 & -\qi q^* \qi & \qi & -q \\ 
1 & -\qi & -\qj & \qi & -1 & \qj q^* \qk & -\qi q 
\end{bmatrix} ,
\end{equation}
and the normalized form of the QHM obtained from~\verb|jQjikki| (a shift and negation of \verb|qJIKKIJ|),
\begin{equation}
\label{eq-o7qhm-badl}
T \eqdef
\begin{bmatrix}
1 & 1 & 1 & 1 & 1 & 1 & 1 \\
1 & \qi q^* & \qk q \qj & -1 & -\qi & \qk & \qi \\
1 & -q^* & -\qi & -\qi q \qi & -1 & -\qk & \qk \\
1 & -\qj q^* & -\qj & \qj & -\qi q \qk & -1 & -\qi \\
1 & q^* & \qj & \qk & -\qj & \qk q \qk & -1 \\
1 & -\qi q^* & \qi & -\qj & \qj & -\qi & q \qi \\
1 & (\qj q^*)^2 & \qj q^* \qk & \qj q^* \qj & -\qj q^* & -\qj q^* \qj & -\qj q^* \qk
\end{bmatrix} .
\end{equation}
In order to prove nonequivalence of~$R$ to~$S$ and~$T$, we follow the approach taken for order five matrices.
\begin{proposition}
\label{prop-o7-nonequiv}
The QHM\/~$R$ in Eq.~\eqref{eq-o7qhm} is neither equivalent to the QHM\/~$S$ defined in Eq.~\eqref{eq-o7qhm-k10} nor to the QHM\/~$T$ defined in Eq.~\eqref{eq-o7qhm-badl}.
\end{proposition}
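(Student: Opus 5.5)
We follow the strategy of \Cref{prop-noneq-gabc} and \Cref{prop-noneq-circulant}: find a property of the columns of a normalized QHM that every matrix equivalent to $S$ (respectively $T$) has, but no normalized matrix equivalent to $R$ has. Since $R$ is circulant with first row $h=[1,-\qj,-q\qj,-\qj,1,-1,-1]$, \Cref{lem-eq-cyclic-qhm} says every normalized QHM equivalent to $R$ has the form $\alpha h_k Q_1 \tH \diag(g(k))^\adjoint Q_2 \alpha^*$ with $k\in[0,7)$. The allowed operations are inner automorphisms, row permutations, and a column permutation. So it suffices to fix one column index $j\neq 0$ and list, for each of the seven values of~$k$, the multiset of entries of column $j$ of $L\eqdef h_k\tH\diag(g(k))^\adjoint$. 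Its $(i,j)$ entry is $h_k h_{-i}^* h_{j-i} h_{j+k}^*$, giving a table analogous to~\eqref{eq-3rd-column}.

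Since $S$ and $T$ are themselves normalized, we cannot compare entries directly. Instead we compare invariants of a column that are preserved under $x\mapsto \alpha x\alpha^*$ and row permutation. Natural candidates are:
\begin{itemize}
\item the multiset of real parts of the column entries (invariant because inner automorphisms fix real parts);
\item the number of entries equal to $\pm1$, the number that are pure imaginary, and the number of pairs that are conjugate or that commute;
\item the Gram matrix of the imaginary parts, since by \Cref{thm-automorphism} inner automorphisms act as rotations.
\end{itemize}
The invariant to use is the multiset, over all non-first columns, of the multisets of real parts. Entries of $\Q_+$ and $\Q_{24}$ have real parts in $\{0,\pm1,\pm\tfrac12\}$, so these are easy to read off.

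We first compute these column invariants for $S$ and $T$ directly from~\eqref{eq-o7qhm-k10} and~\eqref{eq-o7qhm-badl}. From the displays, the $q$-type entries appear to be spread so that each non-first column of $S$ contains specific counts of entries with real part $\pm\tfrac12$, and similarly for $T$. For example, the last column of $S$ contains $(\qk q)^2$, $\qi q$, $q$, $\qk q$, $-q$, $-\qi q$, all with real part in $\{0,\pm\tfrac12\}$. Having recorded, say, the pattern ``number of entries with nonzero non-unit real part'' for every column of $S$ and of $T$, we then compute the same count for the columns of $L$ over all $k$. Note that $R$ contains only a single $q$-type entry per row, so its dephased columns should carry markedly fewer half-integer real parts. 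We expect some column $j$ of $L$ whose count (or real-part multiset) matches no non-first column of $S$ or $T$, for every $k$; we also check that this column is never all ones, so that $Q_2$ cannot move it to the first position. This contradicts equivalence, exactly as in \Cref{prop-noneq-gabc}.

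The main obstacle is choosing an invariant that actually separates the matrices. The real-part count may coincide for some $k$. In that case we refine it with the commuting-pair structure or the inner products of imaginary parts, as in \Cref{prop-noneq-circulant}. The other burden is the routine but error-prone table of $7\times 7$ products $h_k h_{-i}^* h_{j-i} h_{j+k}^*$, which we organize as in~\eqref{eq-3rd-column} and verify by direct quaternion multiplication.
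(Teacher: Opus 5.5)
Your reduction matches the paper's. By \Cref{lem-eq-cyclic-qhm}, every normalized matrix equivalent to $R$ is $\alpha Q_1 L Q_2 \alpha^*$ with $L=h_k\tH\diag(g(k))^\adjoint$ for one of seven values of $k$. One then compares column invariants that survive row permutations and $x\mapsto\alpha x\alpha^*$. The two proofs differ in the separating invariant.

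\emph{The paper's route.} It inspects only the column $j=1$ of $L$ and sorts it into three structural types. For $k\in\{1,2\}$ that column resembles the last column of $S$ and the second column of $T$, so the paper needs a separate argument about where $q^*$ and $\qj q^*$ could be sent.

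\emph{Your route.} Your all-columns profile avoids that case analysis and does work; in fact its crudest part suffices. Every non-first column of $S$ and of $T$ has at most two entries in $\{\pm1\}$, and some have exactly two. By contrast, the columns of $L$ behave as follows:
\begin{itemize}
\item for $k\in\{0,3\}$, column $j=3$ has four entries in $\{\pm1\}$;
\item for $k=1$, column $j=2$ has three;
\item for $k\in\{4,5,6\}$, column $j=1$ has three;
\item for $k=2$, every column $j\neq0$ has exactly one, namely the entry $1$ in row $i=5$.
\end{itemize}
Add the remark that $Q_2$ must send column $0$ of $L$ to the first position, since a QHM cannot have two all-ones columns. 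With that, the proof is complete without any automorphism bookkeeping. The cost is computing all $42$ non-first columns instead of seven.

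Several claims in your plan are false, however, and would derail it if followed literally.
\begin{itemize}
\item A single column compared by real parts cannot separate at $k=2$. There every non-first column of $L$ has real-part multiset $\{1,\tfrac12,\tfrac12,-\tfrac12,-\tfrac12,-\tfrac12,-\tfrac12\}$, exactly that of the last column of $S$ and of the second column of $T$. This is the case that forces the paper's extra argument. Only the fact that all six columns are of this kind, visible in the all-columns profile, distinguishes it.
\item Your heuristic that $R$'s dephased columns carry fewer half-integer real parts is wrong. At $k=2$ the non-first columns of $L$ contain $36$ entries with real part $\pm\tfrac12$, against $16$ in $S$ or $T$.
\item The coarse per-column count you propose to record does not separate at $k=0$. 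There the counts of entries with real part $\pm\tfrac12$ are $\{2,2,2,2,2,6\}$, the same as for $S$ and $T$.
\end{itemize}
What succeeds is the full real-part multiset of every column taken together, or simply the number of $\pm1$ entries per column.
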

\begin{proof}
By \Cref{lem-eq-cyclic-qhm}, any normalized matrix equivalent to~$R$ has the form
\[
\alpha h_k Q_1 \tH \diag(g(k))^\adjoint Q_2 \alpha^*
\]
for~$\alpha$, $h$, $k$, $Q_1$, $\tH$, $g(k)$, and~$Q_2$ as in the lemma. Let~$L \eqdef h_k \tH \diag(g(k))^\adjoint $.
It suffices to prove that there is some column of~$L$ that does not correspond to any column of~$S$ or~$T$ (after the application of the permutations~$Q_1, Q_2$ and automorphism~$x \mapsto \alpha x \alpha^*$ as in the above expression).

Consider the second column of~$L$.
Depending on the value of~$k$, this column equals one of the columns of the table
\begin{equation}
\label{eq-2nd-column}
\begin{array}{c|cccccccccccccc}
k & 0 && 1 && 2 && 3 && 4 && 5 && 6 \\
\hline
h_k h_{0}^* h_{1} h_{1+k}^*  & 1 && -\qj q^* && -q \qj && -1 && \qj && -\qj && \qj \\
h_k h_{6}^* h_{0} h_{1+k}^*  & -\qj && -q^* && -q && \qj && 1 && -1 && 1 \\
h_k h_{5}^* h_{6} h_{1+k}^*  & \qj && q^* && q && -\qj && -1 && 1 && -1 \\
h_k h_{4}^* h_{5} h_{1+k}^*  & -\qj && -q^* && -q && \qj && 1 && -1 && 1 \\
h_k h_{3}^* h_{4} h_{1+k}^*  & -1 && \qj q^* && q \qj && 1 && -\qj && \qj && -\qj \\
h_k h_{2}^* h_{3} h_{1+k}^*  & \qj q^* && (q^*)^2 && 1 && -q^* \qj && \qj q^* \qj && -\qj q^* \qj && \qj q^* \qj \\
h_k h_{1}^* h_{2} h_{1+k}^*  & \qj q  && 1 && q^2 && -q \qj && \qj q \qj && -\qj q \qj && \qj q \qj 
\end{array},
\end{equation}
since the~$i$th entry of the column for~$k \in [0,7)$ is~$h_k h_{-i}^* h_{1-i} h_{1+k}^*$.
The column is not all~$1$, and has one of the following types depending on the value of~$k$. 
\begin{enumerate}
\item \label{item-p1}
It has two elements in~$\set{\pm 1}$ and three elements in~$\set{ \pm \beta}$, where~$\beta \in \Q_8$ is some pure imaginary quaternion.

\item \label{item-p2}
It has three elements in~$\set{\pm 1}$ and two elements in~$\set{ \pm \beta}$, where~$\beta \in \Q_8$ is some pure imaginary quaternion.

\item \label{item-p3}
It has two distinct pairs of quaternions of the form~$(\beta, -\beta)$ for some~$\beta \in \Q_{24} \setminus \Q_8$, and one pair of quaternions~$(\gamma, \gamma^2)$ where~$\gamma \in \Q_{24} \setminus \Q_8$ is a cube root of~$-1$. 

\end{enumerate}

All elements of~$R$, $S$, $T$ are in~$\Q_{24}$, but the rest of the proof only uses the property that the elements of~$ \Q_{24} \setminus \Q_8$ have non-zero real and imaginary parts. Suppose~$R$ is equivalent to~$S$ or~$T$. Then the automorphism~$f \colon x \mapsto \alpha x \alpha^* $ (and the permutations~$Q_1$, $Q_2$) preserves the properties of the three types of columns described above. However, none of the columns of~$S$ or~$T$ are of types~\ref{item-p1} or~\ref{item-p2}. Only the last column of~$S$ and the second column of~$T$ are of type~\ref{item-p3}. So the second column of~$L$ may only be mapped to these columns, and with~$k \in \set{1,2}$ (cf. Eq.~\eqref{eq-2nd-column}). We argue next that this too is not possible.

We present the argument for the last column of~$S$ and~$k = 1$. The other cases follow along the same lines.
The only pairs of the form~$(\beta, -\beta)$ in the last column of~$S$ are~$(q,-q)$ and~$(\qi q, -\qi q)$, and in the second column of~$L$ with~$k = 1$ are~$(q^*, -q^*)$ and~$(\qj q^*, -\qj q^*)$. Since~$\Re(q) = \Re(-\qi q) = \Re(q^*) = \Re(\qj q^*) = 1/2$, the equivalence operations may only map~$(q^* \mapsto q, ~ \qj q^* \mapsto -\qi q)$ or~$(q^* \mapsto -\qi q, ~ \qj q^* \mapsto q)$. Further, we have~$-q^* \mapsto \qk q$ or~$-q^* \mapsto (\qk q)^2$. However, the two sets of conditions are not consistent as they map~$q^*$ to distinct quaternions, so the required equivalence operations do not exist.
\end{proof}

Using similar arguments as above, we show that the QHMs~$R$ and~$S$ are not equivalent to the complex Fourier matrix~$F_7 \mspace{0.7mu}$. 
\begin{proposition}
\label{prop-fourier-noneq}
Neither the QHM\/~$R$ in Eq.~\eqref{eq-o7qhm} nor the QHM\/~$S$ defined in Eq.~\eqref{eq-o7qhm-k10} is equivalent to the order seven Fourier matrix~$F_7$.
\end{proposition}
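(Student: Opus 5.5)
The plan is to use a commutativity invariant, as in the proof of \Cref{prop-noneq-circulant}. All entries of $F_7$ are complex, so they pairwise commute. The property ``all entries of a normalized QHM pairwise commute'' is preserved by the operations in \Cref{lem-eq-cyclic-qhm} that relate normalized matrices: the inner automorphism $x\mapsto \alpha x\alpha^*$ preserves commutation, and row and column permutations only rearrange entries. By \Cref{lem-equivalence}, every normalized QHM equivalent to $F_7$ has the form $D_1P_1F_7P_2D_2$ followed by dephasing. The dephasing computation in the proof of \Cref{lem-finite-equiv-class} shows that such a matrix equals $\alpha N''\alpha^*$, where $N''$ is a dephased permutation of $F_7$ and $\alpha$ is a unit quaternion. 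This step is where noncommutativity must be handled: the factor $a_0$ no longer cancels, but it appears as a conjugation. Since $N''$ has complex entries, all entries of any normalized matrix equivalent to $F_7$ pairwise commute. Equivalently, their pure imaginary parts are real multiples of a single quaternion.

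For $R$, \Cref{lem-eq-cyclic-qhm} says every normalized matrix equivalent to $R$ is $\alpha h_k Q_1 L Q_2\alpha^*$ with $L = h_k\tH\diag(g(k))^\adjoint$. It therefore suffices to show that, for every $k\in[0,7)$, some single column of $L$ already contains two noncommuting entries. I will use the second column tabulated in Eq.~\eqref{eq-2nd-column}. For $k=0$ it contains $\qj$ and $\qj q^*$. The pure imaginary part of $\qj q^*$ is not a multiple of $\qj$, since $\qj q^* = \qj(1-\qi-\qj-\qk)/2$ has imaginary part $(\qj+\qk-\qi)/2$. Hence these two entries do not commute. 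For $k=1$ the column contains $-\qj q^*$ and $-q^*$; for $k=2$ it contains $-q\qj$ and $-q$; for $k=3$ it contains $\qj$ and $-q^*\qj$. For $k=4,5,6$ it contains $\pm\qj$ and $\qj q^*\qj$; here $\qj q^*\qj$ has imaginary part proportional to $\qi-\qj+\qk$ up to signs, which is not parallel to $\qj$. In each case I will check non-parallelism by a one-line computation of imaginary parts. Permutations $Q_1,Q_2$ and the scalar $h_k$ on the left do not matter, because $h_k$ is already included in $L$ and conjugation by $\alpha$ preserves noncommutativity. This yields a normalized representative with noncommuting entries, contradicting the invariant, so $R\not\sim F_7$.

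For $S$, which is not circulant, I will avoid \Cref{lem-eq-cyclic-qhm}. Instead I will show that commutativity of all entries of a normalized matrix is an invariant of the whole equivalence class of normalized matrices. This follows from the first paragraph, because any normalized matrix in the class of $F_7$ has pairwise commuting entries. Since $S$ is itself normalized, it suffices to exhibit two noncommuting entries of $S$, for instance $-\qk$ and $\qi$ in its third row. This gives $S\not\sim F_7$ immediately.

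The main obstacle is the first step: carefully justifying that normalizing $D_1P_1F_7P_2D_2$ yields $\alpha N''\alpha^*$, redoing the computation of \Cref{lem-finite-equiv-class} without commutativity. In that computation $b_0^*=a_0x_{00}$ becomes $b_0^* = x_{00}^*a_0^*$ with the appropriate order, and the normalized entries become $a_0(\cdot)a_0^*$ applied to complex expressions. The casework over $k$ is routine once this is in place.
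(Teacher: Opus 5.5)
Your argument is correct, and it takes a different route from the paper's. The paper never looks at the equivalence class of $F_7$. It uses only that $F_7$ is itself normalized with pairwise commuting entries, and works on the circulant side via \Cref{lem-eq-cyclic-qhm}. For each of the seven values of $k$ it checks that the second column of $L$ contains two entries with nonparallel imaginary parts. For $S$ it repeats this with the analogous $L'$ built from the circulant generator \texttt{+jIIj+W}, which needs a second table.

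You instead prove an invariant on the $F_7$ side. Redoing the computation of \Cref{lem-finite-equiv-class} with quaternionic $D_1,D_2$ shows that every normalized matrix equivalent to a complex Hadamard matrix has the form $a_0Ca_0^*$ with $C$ complex, so its entries pairwise commute. This buys a lot:
\begin{itemize}
\item $S$ is already normalized, and its third row contains $\qi$ and $-\qk$, which do not commute. So $S\not\sim F_7$ follows with no casework and no use of the circulant structure.
\item For $R$, one normalized representative is enough. The $k=0$ column, which contains $\qj$ and $\qj q^*$, comes from the dephasing of $R$ itself. Your sweep over all seven $k$ is therefore redundant, though each of your entry checks is correct.
\item More generally, any QHM with some normalized form containing two noncommuting entries is inequivalent to every complex Hadamard matrix.
\end{itemize}
The paper's route avoids the extra lemma and stays uniform with the proof of \Cref{prop-o7-nonequiv}, at the cost of the tables.

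Two small slips to fix:
\begin{itemize}
\item From $a_0x_{00}b_0=1$ you get $b_0=x_{00}^*a_0^*$, so $b_0^*=a_0x_{00}$ exactly as in the paper. The only change in the quaternionic setting is that $a_0$ no longer cancels, which leaves the conjugation $a_0(\cdot)a_0^*$.
\item Writing $\alpha h_kQ_1LQ_2\alpha^*$ with $L=h_k\tH\diag(g(k))^\adjoint$ counts $h_k$ twice. It should be $\alpha Q_1LQ_2\alpha^*$.
\end{itemize}
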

\begin{proof}
As in the proof of \Cref{prop-o7-nonequiv}, to show that~$R$ is not equivalent to~$F_7\mspace{0.7mu}$, it suffices to show that none of the columns in~\eqref{eq-2nd-column} equal a column of~$F_7$ after some permutation of rows and the application of an automorphism. All the entries of~$F_7$ commute. However, every column in~\eqref{eq-2nd-column} has at least two entries whose pure imaginary parts are not real multiples of the same quaternion, and this property is preserved by the permutation of rows and the application of an automorphism. The claim for~$R$ follows.

We may repeat this argument for the nonequivalence of~$S$ to~$F_7 \mspace{0.7mu}$. Let~$L' \eqdef h_k \tH \diag(g(k))^\adjoint $, where the parameters~$h, \tH, g(k)$ now correspond to the QHM generated by the perfect sequence~\verb|+jIIj+W|, and~$k$ is determined by a putative sequence of equivalence operations. Consider the second column of~$L'$. Depending on the value of~$k$, this column equals one of the columns of the table
\begin{equation}
\label{eq-2nd-column-s}
\begin{array}{c|cccccccccccccc}
k & 0 && 1 && 2 && 3 && 4 && 5 && 6 \\
\hline
h_k h_{0}^* h_{1} h_{1+k}^*  & 1 && -\qi && -\qj  && -\qi  && -1  && -\qi q  && -q^* \qi  \\
h_k h_{6}^* h_{0} h_{1+k}^*  & \qk q \qj && -\qi q \qi  && -\qj q \qi  && \qj q \qj  && -\qi q  && (\qk q)^2  && 1  \\
h_k h_{5}^* h_{6} h_{1+k}^*  & q^* \qi && \qj q^* \qj  && -\qi q^* \qj  && -\qi q^* \qi  && \qj q^* \qk  && 1  &&  (q^* \qk)^2 \\
h_k h_{4}^* h_{5} h_{1+k}^*  & -1 && \qi && \qj  && \qi  && 1  && \qi q  && q^* \qi \\
h_k h_{3}^* h_{4} h_{1+k}^*  & \qi && -1  && -\qk  && 1  && \qi  && q  && -q^*  \\
h_k h_{2}^* h_{3} h_{1+k}^*  & -\qj && -\qk  && 1  && \qk  && \qj  && -\qk q  && q^* \qk  \\
h_k h_{1}^* h_{2} h_{1+k}^*  & -\qi && 1  && \qk  && -1  && -\qi  && -q  && q^* 
\end{array} .
\end{equation}
Again, every column in~\eqref{eq-2nd-column-s} has at least two entries whose pure imaginary parts are not real multiples of the same quaternion. The claim for~$S$ also follows.
\end{proof}

The results in this section suggest that the multiplicity of equivalence classes of QHMs extends to larger orders as well and we conjecture that this is the case for all orders~$\ge 4$.

\section{Conclusion}

We have enumerated perfect quaternion sequences over $\Q_+$ of lengths $n\leq21$.
In the process, we derived new properties of quaternion-type Hadamard matrices with circulant blocks,
formalized three different types of equivalence of such objects, and developed methods ensuring a complete enumeration
up to each notion of equivalence.  In particular, the natural notion of equivalence of quadruples of Williamson-type sequences does not preserve the natural
notion of equivalence of Hadamard matrices, but introducing a third notion of equivalence allowed us to classify all
quaternion-type Hadamard matrices with circulant blocks up to equivalence of Hadamard matrices.
Ultimately, we classify all quaternion-type Hadamard matrices with circulant blocks of orders $n\leq21$ up to each
type of equivalence we consider (see \Cref{table:results}).

We also proved and exploited a one-to-one correspondence between quadruples of circulant Williamson-type matrices and quaternion-type Hadamard matrices with circulant blocks.
Both these kinds of Hadamard matrices have long been studied~\cite{Golomb1963,Wallis1973}.
It is immediate that
circulant Williamson-type matrices produce Hadamard matrices of quaternion-type, but surprisingly
it was not previously known that quaternion-type Hadamard matrices with circulant blocks
necessarily have blocks that are of Williamson-type.
We prove this in \Cref{thm:qt-wt-equiv}.  This property
allows our search for quaternion-type Hadamard matrices defined by QT sequences
to filter the vast majority of pairs of sequences from consideration early.

Our computational enumeration of QT sequences helps construct new quaternion-type Hadamard matrices with circulant blocks and provably nonequivalent quaternionic Hadamard matrices.
We also study properties of quaternionic Hadamard matrices analytically, and demonstrate the feasibility of characterizing quaternionic Hadamard matrices with a fixed pattern of entries.
Perfect sequences give us quaternionic Hadamard matrices that are provably not equivalent to those constructed by other means, as we show in \Cref{sec-new-qhm}.
These results indicate a larger abundance of quaternionic Hadamard matrices exist than known about in previous works,
including matrices studied for their application to quantum communication protocols~\cite{FKN23-mum-superdense-coding}.

\bibliographystyle{plainurl}
\bibliography{references}

\section*{Appendix: List of QT sequences}
\label{sec-qt-sequences}

Here we provide an exhaustive list of all QT sequences up to Williamson-type equivalence
for all lengths $n\leq21$.  The sequences are classified as either symmetric (such sequences
are also known as \emph{Williamson sequences}) or non-symmetric.
Each set of QT sequences is encoded as a
perfect quaternion sequence using the correspondence from \Cref{thm-bad-correspondence}, as described in \Cref{sec:sequences}.
The symbols \texttt{+} and \texttt{-} denote $1$ and $-1$, respectively; \texttt{q}, \texttt{i}, \texttt{j}, \texttt{k} denote the quaternions~$q=\frac{1+\qi+\qj+\qk}{2}$, $\qi$, $\qj$, $\qk$, respectively;
\texttt{x}, \texttt{y}, \texttt{z} denote $q\qi$, $q\qj$, $q\qk$, respectively;
and capitalization corresponds to the negation of the quaternion denoted by the same letter in lower case. For example, \texttt{X} denotes $-q\qi$.

\microtypesetup{protrusion=false}
\subsubsection*{Length 1 (1 total; 1 symmetric, 0 non-symmetric)}
\raggedright
\verb|+|
\subsubsection*{Length 2 (1 total; 1 symmetric, 0 non-symmetric)}
\raggedright
\verb|+J|
\subsubsection*{Length 3 (1 total; 1 symmetric, 0 non-symmetric)}
\raggedright
\verb|Q++|
\subsubsection*{Length 4 (2 total; 1 symmetric, 1 non-symmetric)}
\raggedright
\emph{1 Symmetric:}\\
\verb|++-+|
\\[0.5\baselineskip]
\emph{1 Nonsymmetric:}\\
\verb|+YIQ|\\
\subsubsection*{Length 5 (1 total; 1 symmetric, 0 non-symmetric)}
\raggedright
\verb|x+JJ+|
\subsubsection*{Length 6 (1 total; 1 symmetric, 0 non-symmetric)}
\raggedright
\verb|KJ+j+J|
\subsubsection*{Length 7 (2 total; 2 symmetric, 0 non-symmetric)}
\raggedright
\verb|YJ+--+J|, \verb|y+JKKJ+|
\subsubsection*{Length 8 (3 total; 1 symmetric, 2 non-symmetric)}
\raggedright
\emph{1 Symmetric:}\\
\verb|J++-J-++|
\\[0.5\baselineskip]
\emph{2 Nonsymmetric:}\\
\verb|+YYIiqq-|, \verb|+JYZikqx|\\
\subsubsection*{Length 9 (4 total; 3 symmetric, 1 non-symmetric)}
\raggedright
\emph{3 Symmetric:}\\
\verb|yi+JKKJ+i|, \verb|XKJ+jj+JK|, \verb|Z+J+--+J+|
\\[0.5\baselineskip]
\emph{1 Nonsymmetric:}\\
\verb|+YQJYZikx|\\
\subsubsection*{Length 10 (2 total; 2 symmetric, 0 non-symmetric)}
\raggedright
\verb|KJ+ikIki+J|, \verb|K+J+j-j+J+|
\subsubsection*{Length 11 (1 total; 1 symmetric, 0 non-symmetric)}
\raggedright
\verb|z-+JKIIKJ+-|
\subsubsection*{Length 12 (5 total; 3 symmetric, 2 non-symmetric)}
\raggedright
\emph{3 Symmetric:}\\
\verb|Y++y-+y+-y++|, \verb|YQ++-QyQ-++Q|, \verb|JIIY+-j-+YII|
\\[0.5\baselineskip]
\emph{2 Nonsymmetric:}\\
\verb|+++Yy++-+yy-|, \verb|+++YQ++-+yQ-|\\
\subsubsection*{Length 13 (4 total; 4 symmetric, 0 non-symmetric)}
\raggedright
\verb|x-+JK+II+KJ+-|, \verb|XKJ+jK--Kj+JK|, \verb|QJ++jJjjJj++J|, \verb|X-k+JKIIKJ+k-|
\subsubsection*{Length 14 (5 total; 5 symmetric, 0 non-symmetric)}
\raggedright
\verb|KJ+J+j-J-j+J+J|, \verb|QQ+J+jzYzj+J+Q|, \verb|+QY+kzxJxzk+YQ|, \verb|YKJ+j-iQi-j+JK|, \verb|QJ++iIkxkIi++J|
\subsubsection*{Length 15 (4 total; 4 symmetric, 0 non-symmetric)}
\raggedright
\verb|XIij+JKJJKJ+jiI|, \verb|QjJKj+JJJJ+jKJj|, \verb|XIKJ+kKjjKk+JKI|, \verb|Q+J+jj+--+jj+J+|
\subsubsection*{Length 16 (18 total; 6 symmetric, 12 non-symmetric)}
\raggedright
\emph{6 Symmetric:}\\
\verb|-J+++-+j-j+-+++J|, \verb|QJ++q-+jQj+-q++J|, \verb|KJ++k-+jKj+-k++J|, \verb|JJ++j-+jJj+-j++J|, \verb|YJ++y-+jYj+-y++J|, \verb|+J++--+j+j+--++J|
\\[0.5\baselineskip]
\emph{12 Nonsymmetric:}\\
\verb|+Y+JYZ+qIxJkqQjy|, \verb|++JJ+J+j+-jJ+j-j|, \verb|++JY+J+y+-jY+j-y|, \verb|++J++J+-+-j++j--|, \verb|++JK+J+k+-jK+j-k|, \verb|+++J+J-++-+j+j--|, \verb|++YJ+Jy++-Yj+jy-|, \verb|++JK+Kj++-Jk+kj-|, \verb|++JJ+Jj++-Jj+jj-|, \verb|+Q++JQYqKXJjixxz|, \verb|++YJIJQ++-YjIjQ-|, \verb|++YIIIQ++-YiIiQ-|\\
\subsubsection*{Length 17 (4 total; 4 symmetric, 0 non-symmetric)}
\raggedright
\verb|ZK+J+i-kKKk-i+J+K|, \verb|yjKJ-+J+KK+J+-JKj|, \verb|xkKIk+JJKKJJ+kIKk|, \verb|y-+Kk+JJKKJJ+kK+-|
\subsubsection*{Length 18 (28 total; 23 symmetric, 5 non-symmetric)}
\raggedright
\emph{23 Symmetric:}\\
\verb|ZXY+i-KZyYyZK-i+YX|, \verb|Z+J+i-KIjYjIK-i+J+|, \verb|YJ++xyKiIZIiKyx++J|, \verb|YJY+yxJZkXkZJxy+YJ|, \verb|Q+++i-IiIXIiI-i+++|, \verb|I+Y+yqIQiJiQIqy+Y+|, \verb|K+J+jiKIkIkIKij+J+|, \verb|ZKJ++jKk-Y-kKj++JK|, \verb|KIJY+jQi-J-iQj+YJI|, \verb|YQJ++jJiyXyiJj++JQ|, \verb|YJQ++jJxkXkxJj++QJ|, \verb|QKJ++jJIjzjIJj++JK|, \verb|+QY+-JJZyjyZJJ-+YQ|, \verb|KJ++yZijkJkjiZy++J|, \verb|KZJ++kKiyIyiKk++JZ|, \verb|ZKJ++kKi-X-iKk++JK|, \verb|QIYJ+iIXjxjXIi+JYI|, \verb|QY++yXKIzxzIKXy++Y|, \verb|Z+J+xQKIkxkIKQx+J+|, \verb|ZKJ++kKI-q-IKk++JK|, \verb|XXY+-KkzqZqzkK-+YX|, \verb|ZKJ+j+j-iXi-j+j+JK|, \verb|QJ++jikKiyiKkij++J|
\\[0.5\baselineskip]
\emph{5 Nonsymmetric:}\\
\verb|+YXXXQY+kKqQyyXqK-|, \verb|+YJXXIY+zKqIyyJqKy|, \verb|++YYXXQ+KkJqxYyyJ-|, \verb|++YJXIQ+JjJqiYjyJ-|, \verb|++YQJY+jqiIzxkZikx|\\
\subsubsection*{Length 19 (6 total; 6 symmetric, 0 non-symmetric)}
\raggedright
\verb|Qj-J-++JKIIKJ++-J-j|, \verb|YKk-Kj+J+JJ+J+jK-kK|, \verb|Y+-kJj+JKKKKJ+jJk-+|, \verb|ZIKJJ+-kiIIik-+JJKI|, \verb|ZIKJ+kJj-JJ-jJk+JKI|, \verb|ZJJJ+j+-jJJj-+j+JJJ|
\subsubsection*{Length 20 (24 total; 17 symmetric, 7 non-symmetric)}
\raggedright
\emph{17 Symmetric:}\\
\verb|JQ+Y+k+x-XjX-x+k+Y+Q|, \verb|JIY++xKkzJjJzkKx++YI|, \verb|IJY++xKkzIiIzkKx++YJ|, \verb|IIYJ+xkiQKiKQikx+JYI|, \verb|YJXY+yjQz+y+zQjy+YXJ|, \verb|XJ+Y+y-ZkJxJkZ-y+Y+J|, \verb|QJ+Y+jkY-IqI-Ykj+Y+J|, \verb|+KJ++kjKIi-iIKjk++JK|, \verb|JQJ++xkKKzjzKKkx++JQ|, \verb|JKJ++xKjkJjJkjKx++JK|, \verb|KKJ++zjJK-k-KJjz++JK|, \verb|IJXY+q-ZX-i-XZ-q+YXJ|, \verb|QKJ++y-+jKqKj+-y++JK|, \verb|YJ++jxJ+-JyJ-+Jxj++J|, \verb|QJJ++x-+jJqJj+-x++JJ|, \verb|QKJ++z-+jKqKj+-z++JK|, \verb|YJ++jyJ+-JyJ-+Jyj++J|
\\[0.5\baselineskip]
\emph{7 Nonsymmetric:}\\
\verb|+++JKQzJK++-+jKqzjK-|, \verb|++JQQ+Jj+j-+jQq+jj-j|, \verb|++JKJQ+Ky++-JkJq+ky-|, \verb|++JJJQ+Jx++-JjJq+jx-|, \verb|++JKJQ+Kx++-JkJq+kx-|, \verb|+++JKQyJK++-+jKqyjK-|, \verb|++YJ+Jj+Jy-+yJ-JJ+jy|\\
\subsubsection*{Length 21 (7 total; 7 symmetric, 0 non-symmetric)}
\raggedright
\verb|ZiIIk-+JKK++KKJ+-kIIi|, \verb|x+iJ--+JK+II+KJ+--Ji+|, \verb|zK+ikj+JKJIIJKJ+jki+K|, \verb|zK+jki+JKIJJIKJ+ikj+K|, \verb|Z++J+-jjJ-++-Jjj-+J++|, \verb|XK+J+kj+I-ii-I+jk+J+K|, \verb|ZIKJ+j-kKiKKiKk-j+JKI|

\end{document}